\newlength{\vslength}
\newcommand{\ie}{{\it i.e.}}
\newcommand{\cf}{{\it c.f.}}
\newcommand{\eg}{{\it e.g.}}
\newcommand{\ea}{{\it et al.}}
\newcommand{\lhs}{{\it l.h.s.}}
\newcommand{\rhs}{{\it r.h.s.}}
\newcommand{\iid}{{\it i.i.d.}}
\newcommand{\etc}{{\it etcetera}}
\newcommand{\RR}{{\mathbb R}}
\newcommand{\NN}{{\mathbb N}}
\newcommand{\scrB}{{\mathscr B}}
\newcommand{\scrD}{{\mathscr D}}
\newcommand{\scrE}{{\mathscr E}}
\newcommand{\scrF}{{\mathscr F}}
\newcommand{\scrG}{{\mathscr G}}
\newcommand{\scrP}{{\mathscr P}}
\newcommand{\scrS}{{\mathscr S}}
\newcommand{\scrT}{{\mathscr T}}
\newcommand{\scrU}{{\mathscr U}}
\newcommand{\scrX}{{\mathscr X}}
\newcommand{\samplen}{{X^{n}}}
\newcommand{\sample}{{X}}
\newcommand{\realizationn}{{x^{n}}}
\newcommand{\al}{{\alpha}}
\newcommand{\ep}{{\epsilon}}
\newcommand{\vep}{{\varepsilon}}
\newcommand{\tht}{{\theta}}
\newcommand{\Tht}{{\Theta}}
\renewcommand{\emptyset}{{\varnothing}}
\newcommand{\ctg}{\mathbin{\lhd}}
\newcommand{\ft}[2]{{\textstyle{\frac{#1}{#2}}}}
\newcommand{\conv}[1]%
  {{\mathrel{\,\xrightarrow{\widthof{\,#1\,}}\,}}}
\newcommand{\convas}[1]%
  {{\mathrel{\,\xrightarrow{\widthof{\,#1\text{-a.s.}\,}}\,}}}
\newcommand{\convprob}[1]%
  {{\mathrel{\,\xrightarrow{\widthof{\,#1\,}}\,}}}
\newcommand{\convweak}[1]%
  {{\mathrel{\,\xrightarrow{\widthof{\,#1\text{-w.}\,}}\,}}}
\newcommand{\twobytwo}[4]%
  {\left(\begin{array}{cc} #1 & #2 \\ #3 & #4 \end{array}\right)}
\newcommand{\twovec}[2]%
  {\left({\begin{array}{c} #1\\#2 \end{array}}\right)}
\newcommand{\contig}{\mathop{\vartriangleleft}}
\renewcommand{\qedsymbol}{$\Box$}
\newcommand{\closebox}{\hfill\qedsymbol}
\newtheoremstyle{customtheorem}
  {0.5em}
  {0.2em}
  {\itshape}
  {}
  {\scshape}
  {}
  {1ex}
  {}
\theoremstyle{customtheorem}
\newtheorem{theorem}{Theorem}[section]
\newtheorem{lemma}[theorem]{Lemma}
\newtheorem{proposition}[theorem]{Proposition}
\newtheorem{corollary}[theorem]{Corollary}
\newtheorem{definition}[theorem]{Definition}
\newtheoremstyle{customremark}
  {0.5em}
  {0.2em}
  {}
  {}
  {\scshape}
  {}
  {1ex}
  {}
\theoremstyle{customremark}
\renewenvironment{proof}{\par\noindent{\scshape Proof}\;}{\hfill\qedsymbol\par}
\newtheorem{remark}[theorem]{Remark}
\newtheorem{example}[theorem]{Example}
\newcommand{\comment}[1]{{}}
\newcommand{\extra}[1]{{}}
\begin{document}

\thispagestyle{empty}

\title{\vspace*{-11mm}
  On the frequentist validity of Bayesian limits}
\author{
  B.~J.~K.~{Kleijn}\footnote{email: B.Kleijn@uva.nl, web: https://staff.fnwi.uva.nl/b.j.k.kleijn}\\[1mm]
  {\small\it Korteweg-de~Vries Institute for Mathematics,
    University of Amsterdam}\\[2mm]
  }
\date{November 2017}
\maketitle

\begin{abstract}
To the frequentist who computes posteriors, not
all priors are useful asymptotically: in this
paper Schwartz's 1965 Kullback-Leibler condition
is generalised to enable frequentist interpretation
of convergence of posterior distributions with the
complex models and often dependent datasets in
present-day statistical applications. We prove
four simple and fully general frequentist theorems,
for posterior consistency; for posterior rates of
convergence; for consistency of the Bayes factor
in hypothesis testing or model selection; and a
theorem to obtain confidence sets from credible
sets. The latter has a significant methodological
consequence in frequentist uncertainty
quantification: use of a suitable prior allows
one to convert credible sets of a calculated,
simulated or approximated posterior into
asymptotically consistent confidence sets, in
full generality. This extends the main
inferential implication of the Bernstein-von~Mises
theorem to non-parametric models without
smoothness conditions. Proofs require the
existence of a Bayesian type of test sequence
and priors giving rise to local prior predictive
distributions that satisfy a weakened form of
Le~Cam's contiguity with respect to the data
distribution. Results are applied in a wide range 
0f examples and counterexamples.
\end{abstract}


\section{Introduction}
\label{sec:intro}

In this paper (following \cite{Bayarri04}:{\it ``Statisticians should
readily use both Bayesian and frequentist ideas.''}) we examine
for which priors Bayesian asymptotic conclusions extend to conclusions
valid in the frequentist sense: how Doob's prior-almost-sure
consistency is strengthened to reach Schwartz's frequentist conclusion that the
posterior is consistent, or how a test that is consistent prior-almost-surely
becomes a test that is consistent in \emph{all} points of the model,
or how a Bayesian credible set can serve as a frequentist confidence set
asymptotically.

The central property to enable frequentist interpretation
of posterior asymptotics is defined as \emph{remote contiguity} in
section~\ref{sec:rc}. It expresses a weakened form of Le~Cam's contiguity,
relating the true distribution of the data to localized prior predictive
distributions. Where Schwartz's Kullback-Leibler neighbourhoods represent a
choice for the localization appropriate when the sample is \iid,
remote contiguity
generalises the notion to include non-\iid\ samples, priors that change with
the sample size, weak consistency with the Dirichlet process prior, \etc.

Although firstly aimed at enhancing insight into asymptotic relations
by simplification and generalisation, this paper also has
a significant methodological consequence: theorem~\ref{thm:coverage}
demonstrates that if the prior is such that remote contiguity applies,
credible sets can be converted to asymptotically consistent confidence
sets in full generality. So the asymptotic validity of credible sets as
confidence sets in smooth parametric models \cite{LeCam90} extends much
further: in practice, the frequentist can simulate the posterior
in any model, construct his preferred type of credible sets and `enlarge'
them to obtain asymptotic confidence sets, provided his prior
induces remote contiguity. This extends the
main inferential implication of the Bernstein-von Mises theorem to
non-parametric models.

In the remainder of this section we discuss posterior consistency.
In section~\ref{sec:post} we concentrate on an inequality that relates
testing to posterior concentration and indicates the relation with Le~Cam's inequality. Section~\ref{sec:rc} introduces remote contiguity and
the analogue of Le~Cam's First Lemma. In section~\ref{sec:four}, 
frequentist theorems on the asymptotic behaviour of posterior distributions
are proved, on posterior consistency, on posterior rates of convergence,
on consistent testing and model selection with Bayes factors and on
the conversion of credible sets to confidence sets. Section~\ref{sec:disc}
formulates the conclusions.

Definitions, notation, conventions roughly follow those of \cite{LeCam86}
and are collected in appendix~\ref{sec:defs} with some other
preliminaries. 
All applications, illustrations, examples and counterexamples have been collected in
appendix~\ref{sec:app}. Proofs are found in appendix~\ref{sec:proofs}.

\subsection{Posterior consistency and inconsistency}
\label{sub:sketch}

For a statistical procedure to be \emph{consistent}, it must infer 
the truth with arbitrarily large accuracy and probability, if we
gather enough data.
For example, when using sequential data $X^n\sim P_{\tht_0,n}$ to
estimate the value $\tht_0$, a consistent estimator sequence
$\tht_n$ converges to $\tht_0$ in $P_{\tht_0,n}$-probability.
For a posterior $\Pi(\cdot|X^n)$ to be consistent,
it must concentrate mass arbitrarily close to one in any neighbourhood
of $\tht_0$ as $n\to\infty$ (see definition~\ref{def:consistency}).

Consider a model $\scrP$ for \iid\ data with single-observation
distribution $P_0$. Give $\scrP$ a Polish topology with Borel
prior $\Pi$ so that the posterior is well-defined
(see definition~\ref{def:posterior}). The first
general consistency theorem for posteriors is due to Doob.
\begin{theorem}
\label{thm:doob}{\it (Doob (1949))}\\
For all $n\geq1$, let $(X_1, X_2, \ldots, X_n)\in\scrX^n$ be
$\iid-P_0$, where $P_0$ lies in a model $\scrP$. Suppose
$\scrX$ and $\scrP$ are Polish spaces. Assume that $P\mapsto P(A)$ is
Borel measurable for every Borel set $A\subset\scrX$. Then for any
Borel prior $\Pi$ on $\scrP$ the posterior is consistent, for
$\Pi$-almost-all $P$.
\end{theorem}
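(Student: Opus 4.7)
The plan is to recast posterior consistency as a statement about a martingale on a suitably enlarged probability space, apply Lévy's upward convergence theorem, and use the Polish structure of $\scrP$ to handle uncountably many neighbourhoods simultaneously.

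First I would set up the joint distribution $\PP$ on $\scrP\times\scrX^{\NN}$ defined by $\PP(B\times A)=\int_B P^{\infty}(A)\,d\Pi(P)$, with coordinate maps $(P,X_1,X_2,\ldots)$. Because $\scrP$ and $\scrX$ are Polish and the map $P\mapsto P(A)$ is Borel, a regular conditional distribution exists, and one may identify the posterior $\Pi(\,\cdot\mid X_1,\ldots,X_n)$ with the regular conditional law of $P$ given $\scrF_n=\sigma(X_1,\ldots,X_n)$. For any bounded Borel $f:\scrP\to\RR$, the process $M_n=\PP[f(P)\mid\scrF_n]$ is then a uniformly integrable martingale.

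The key identifiability step, and the one I expect to be the main obstacle, is to show that $P$ itself is $\scrF_{\infty}$-measurable under $\PP$. Concretely I would construct a Borel map $T:\scrX^{\NN}\to\scrP$ with $T(X_1,X_2,\ldots)=P$, $\PP$-a.s. Because $\scrX$ is Polish, the empirical measures $\hat P_n=n^{-1}\sum_{i\le n}\delta_{X_i}$ converge weakly to $P$ for $P^{\infty}$-almost every sample, by Varadarajan's theorem (a measure-theoretic strong law). Integrating against $\Pi$, the $\PP$-almost-sure weak limit is $P$, and weak convergence on a Polish space is Borel, so $T$ exists and $P$ is $\scrF_{\infty}$-measurable. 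By Lévy's upward convergence theorem, $M_n\to f(P)$ $\PP$-a.s. for every bounded Borel $f$.

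Next I would exploit second countability of $\scrP$: fix a countable base $\{U_k\}_{k\in\NN}$ of the Polish topology on $\scrP$, apply the previous paragraph to $f=1_{U_k}$, and take a countable union of $\PP$-null sets to obtain one exceptional set $N$ outside of which, simultaneously for every $k$, $\Pi(U_k\mid X_1,\ldots,X_n)\to 1_{U_k}(P)$. On $N^c$, for any open neighbourhood $U$ of $P$ choose a basis element $U_k$ with $P\in U_k\subset U$; then $\Pi(U\mid X_1,\ldots,X_n)\ge \Pi(U_k\mid X_1,\ldots,X_n)\to 1$, which is posterior consistency at $P$.

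Finally I would disintegrate: the event that posterior consistency fails is $\scrF_{\infty}$-measurable (modulo negligible sets) and has $\PP$-probability zero, so by Fubini applied to $\PP=\int P^{\infty}\,d\Pi$, the $\Pi$-measure of $P$'s for which $P^{\infty}$ assigns positive probability to inconsistency is zero. That delivers the conclusion for $\Pi$-almost every $P\in\scrP$.
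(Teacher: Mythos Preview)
Your proposal is correct and follows essentially the same route the paper sketches: martingale convergence (L\'evy's upward theorem) combined with the existence of a Borel map $T:\scrX^{\infty}\to\scrP$ recovering $P$ almost surely (what the paper calls \emph{accessibility}, citing Breiman--Le~Cam--Schwartz and Le~Cam). Your construction of $T$ via weak limits of empirical measures (Varadarajan) and the reduction to a countable base are standard ways to fill in the details the paper leaves implicit.
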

In parametric applications Doob's $\Pi$-null-set of potential
inconsistency can be considered small (for example, when the
prior dominates Lebesgue measure). But in non-parametric context these
null-sets can become very large (or not, see \cite{Lijoi04}): the first
examples of unexpected posterior inconsistency are due to Schwartz
\cite{Schwartz61}, but it was Freedman \cite{Freedman63} who made
the point famous with a simple non-parametric counterexample
(discussed in detail as example~\ref{ex:freedman63}). In
\cite{Freedman65} it was even shown that inconsistency is generic
in a topological sense:
the set of pairs $(P_0,\Pi)$ for which the posterior is consistent
is \emph{meagre}: posteriors that only wander around, placing
and re-placing mass aimlessly, are the rule rather than the exception.
(For a discussion, see example~\ref{ex:freedman65}.)

These and subsequent examples of posterior
inconsistency established a widespread conviction
that Bayesian methods were wholly unfit for
frequentist purposes, at least in non-parametric context. The only
justifiable conclusion from Freedman's meagreness, however, is that
a condition is missing: Doob's assertion may be all that a Bayesian
requires, a frequentist demands strictly more, thus restricting the
class of possible choices for his prior. 
Strangely, a condition representing this restriction had already
been found when Freedman's meagreness result was published.
\begin{theorem}
\label{thm:schwartz}{\it (Schwartz (1965))}\\
For all $n\geq1$, let $(X_1, X_2, \ldots, X_n)\in\scrX^n$ be
$\iid-P_0$, where $P_0$ lies in a model $\scrP$. Let $U$ denote
an open neighbourhood of $P_0$ in $\scrP$. If,
\begin{itemize}[noitemsep,nolistsep]
\item[(i)] there exist measurable $\phi_n:\scrX^n\to[0,1]$, such that,  
  \begin{equation}
    \label{eq:schwartzuniformtests}
    P_0^n\phi_n =o(1),\quad \sup_{Q\in U^c}Q^n(1-\phi_n)=o(1),
  \end{equation}
\item[(ii)] and $\Pi$ is a Kullback-Leibler prior, \ie\ for all $\delta>0$,  
  \begin{equation}
    \label{eq:KLprior}
    \Pi\Bigl(\,P\in\scrP\,:\,
      -P_0\log\frac{dP}{dP_0}<\delta\,\Bigr) > 0,
  \end{equation}
\end{itemize}
then $\Pi(U|\samplen)\convas{P_0}1$.
\end{theorem}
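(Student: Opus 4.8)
The plan is to bound the posterior mass of $U^c$ directly from the likelihood-ratio form of the posterior. Write
\[
\Pi(U^c|\samplen)=\frac{N_n}{D_n},\qquad
N_n=\int_{U^c}\prod_{i=1}^n\frac{dP}{dP_0}(X_i)\,d\Pi(P),\qquad
D_n=\int_{\scrP}\prod_{i=1}^n\frac{dP}{dP_0}(X_i)\,d\Pi(P).
\]
The argument splits into two essentially independent parts: an almost-sure lower bound for the denominator showing $e^{n\delta}D_n\to\infty$ for every $\delta>0$, which uses only the Kullback-Leibler property (ii); and a $P_0^n$-mean upper bound for the numerator $N_n$ once it is multiplied by a good test, which uses the testing property (i). All limits below are under $P_0^{\infty}$ on $\scrX^{\infty}$.

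For the denominator I would fix $\delta>0$ and restrict the integral defining $D_n$ to the Kullback-Leibler ball $B_\delta=\{P:-P_0\log\frac{dP}{dP_0}<\tfrac{\delta}{2}\}$, which has $\Pi(B_\delta)>0$ by (ii). Writing $\Pi(\cdot|B_\delta)$ for the normalised restriction of $\Pi$ to $B_\delta$, Jensen's inequality for the concave logarithm followed by Fubini gives, $P_0$-a.s.,
\[
\tfrac{1}{n}\log D_n\ \ge\ \tfrac{1}{n}\log\Pi(B_\delta)+\tfrac{1}{n}\sum_{i=1}^{n}g(X_i),
\qquad\text{where}\quad
g(x)=\int_{B_\delta}\log\tfrac{dP}{dP_0}(x)\,d\Pi(P|B_\delta).
\]
Because $\log^{+}\frac{dP}{dP_0}\le\frac{dP}{dP_0}$, the positive part of $g$ is $P_0$-integrable, so $g\in L^1(P_0)$ with $P_0g=\int_{B_\delta}\bigl(P_0\log\frac{dP}{dP_0}\bigr)\,d\Pi(P|B_\delta)\ge-\tfrac{\delta}{2}$. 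The strong law then gives $\tfrac1n\sum_ig(X_i)\to P_0g$ and $\tfrac1n\log\Pi(B_\delta)\to0$, so $\liminf_n\tfrac1n\log D_n\ge-\tfrac{\delta}{2}$; hence $e^{n\delta}D_n\to\infty$, $P_0$-a.s., for every $\delta>0$.

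For the numerator I would first upgrade the merely $o(1)$ tests of (i) to exponentially powerful ones. Since both errors vanish, fix a block length $m$ with $P_0^m\phi_m\le\tfrac{1}{4}$ and $\sup_{Q\in U^c}Q^m(1-\phi_m)\le\tfrac{1}{4}$; for $n\ge2m$ partition $\samplen$ into $k=\lfloor n/m\rfloor\ge n/(2m)$ disjoint blocks of length $m$, evaluate $\phi_m$ on each block, and let $\psi_n$ reject on a non-randomised majority vote. The block statistics are independent and $[0,1]$-valued, so Hoeffding's inequality bounds both $P_0^n\psi_n$ and $\sup_{Q\in U^c}Q^n(1-\psi_n)$ by $e^{-k/8}\le e^{-cn}$ with $c=1/(16m)$, for all large $n$. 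Now by Fubini and the elementary change of measure $P_0^n\bigl((1-\psi_n)\prod_{i=1}^n\frac{dP}{dP_0}(X_i)\bigr)\le P^n(1-\psi_n)$,
\[
P_0^n\bigl((1-\psi_n)N_n\bigr)
=\int_{U^c}P_0^n\Bigl((1-\psi_n)\prod_{i=1}^n\tfrac{dP}{dP_0}(X_i)\Bigr)d\Pi(P)
\ \le\ \sup_{Q\in U^c}Q^n(1-\psi_n)\ \le\ e^{-cn}.
\]
Since $\sum_n e^{cn/2}P_0^n\bigl((1-\psi_n)N_n\bigr)\le\sum_n e^{-cn/2}<\infty$, Tonelli yields $(1-\psi_n)N_n=o(e^{-cn/2})$, $P_0$-a.s.; and $\sum_n P_0^n\psi_n<\infty$ yields $\psi_n\to0$, $P_0$-a.s.

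Finally, taking $\delta=c/4$ in the denominator bound and using $\Pi(U^c|\samplen)\le1$,
\[
\Pi(U^c|\samplen)\ \le\ \psi_n+(1-\psi_n)\frac{N_n}{D_n}
\ \le\ \psi_n+\frac{e^{cn/4}(1-\psi_n)N_n}{e^{cn/4}D_n}\ \convas{P_0}0,
\]
since $\psi_n\to0$, the numerator $e^{cn/4}(1-\psi_n)N_n=o(e^{-cn/4})\to0$, and the denominator $e^{cn/4}D_n\to\infty$; hence $\Pi(U|\samplen)=1-\Pi(U^c|\samplen)\convas{P_0}1$. The hard part is precisely the mismatch between the $o(1)$ testing hypothesis in (i) and the only exponentially small lower bound $e^{-n\delta}$ that the Kullback-Leibler condition can provide for $D_n$: an $o(1)$ control of $(1-\phi_n)N_n$ would not absorb the factor $e^{n\delta}$, so the boosting step — and the verification that a single fixed block length $m$ suffices — carries the real weight, whereas the Jensen/strong-law estimate for $D_n$ is routine apart from the small integrability observation noted above.
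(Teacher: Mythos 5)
Your proof is correct and self-contained; it is essentially the classical argument for Schwartz's theorem. The paper itself does not prove theorem~\ref{thm:schwartz} --- it attributes it to Schwartz (1965) and instead proves a closely related variant, corollary~\ref{cor:schwartz}, through the remote-contiguity machinery of lemma~\ref{lem:rcfirstlemma} and theorem~\ref{thm:consistency}. The two routes differ mainly in how the denominator bound $e^{n\delta}D_n\to\infty$ is obtained: you apply Jensen's inequality to $\log D_n$ and then the strong law to the $\Pi(\cdot|B_\delta)$-averaged log-likelihood $g$, whereas the paper applies the strong law pointwise to $n^{-1}\log(dP^n/dP_0^n)$ for each $P$ in the KL ball and integrates via Fatou's lemma; both yield the same almost-sure lower bound, but the Fatou form is precisely what the paper abstracts into the sufficient condition~(ii) of lemma~\ref{lem:rcfirstlemma}. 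Your blocking-and-Hoeffding boost of the $o(1)$ tests to exponential power is exactly the step the paper acknowledges in the remark preceding corollary~\ref{cor:schwartz}, and your Borel--Cantelli upgrade to almost-sure convergence matches the observation after definition~\ref{def:specremctg} that summable rates strengthen convergence in probability to almost-sure convergence. What the paper's abstraction buys is generality --- non-\iid\ data, $n$-dependent priors, and Bayesian rather than uniform tests; what your direct argument buys is transparency in the \iid\ case and it isolates correctly the key tension, namely that the prior-mass condition only yields a sub-exponential lower bound for $D_n$, so the merely $o(1)$ tests in condition~(i) must be boosted before the ratio can be controlled. One should record (as the paper does before corollary~\ref{cor:schwartz}) that the posterior-as-likelihood-ratio representation $\Pi(U^c|\samplen)=N_n/D_n$ requires the prior predictive to dominate $P_0^n$, which the Kullback--Leibler support condition~(ii) guarantees; with that noted, your argument is complete.
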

Over the decades, examples of problematic posterior behaviour in
non-parametric setting continued to captivate 
\cite{Diaconis86a, Diaconis86b, Cox93, Diaconis93, Diaconis98,
Freedman83, Freedman99}, while Schwartz's theorem received
initially limited but steadily growing amounts of attention:
subsequent frequentist theorems (\eg\ by Barron \cite{Barron88},
Barron-Schervish-Wasserman \cite{Barron99},
Ghosal-Ghosh-van~der~Vaart \cite{Ghosal00},
Shen-Wasserman \cite{Shen01}, Walker \cite{Walker04}
and Walker-Lijoi-Pr\"unster \cite{Walker07}, Kleijn-Zhao
\cite{Kleijn16} and many others) have extended the applicability
of theorem~\ref{thm:schwartz} but not its essence, condition
(\ref{eq:KLprior}) for the prior. The following example illustrates
that Schwartz's condition cannot be the whole truth, though.
\begin{example}
\label{ex:noKLpriors}
Consider $X_1,X_2,\ldots$ that are \iid-$P_0$
with Lebesgue density $p_0:\RR\rightarrow\RR$ supported on
an interval of known width (say, $1$) but unknown location.
Parametrize in terms of a continuous density $\eta$ on
$[0,1]$ with $\eta(x)>0$ for all $x\in[0,1]$ and a location
$\tht\in\RR$:
$p_{\tht,\eta}(x) = \eta(x-\tht)\,1_{ [\tht,\tht+1] }(x)$.
A moment's thought makes clear that if $\tht\neq\tht'$,
\[
  -P_{\tht,\eta}\log\frac{p_{\tht',\eta'}}{p_{\tht,\eta}} = \infty,
\]
for all $\eta,\eta'$. Therefore Kullback-Leibler neighbourhoods
do not have any extent in the $\tht$-direction and \emph{no prior is
a Kullback-Leibler prior in this model}. Nonetheless the posterior
is consistent (see examples~\ref{ex:rcnoKLpriors}
and~\ref{ex:testnoKLpriors}).\closebox
\end{example}
Similar counterexamples exist \cite{Kleijn16} for the type of
prior that is proposed in the analyses of posterior rates of
convergence in (Hellinger) metric setting \cite{Ghosal00,Shen01}.
Although methods in \cite{Kleijn16} avoid this type of problem,
the essential nature of condition (\ref{eq:KLprior}) in \iid\
setting becomes apparent there as well.

This raises the central question of this paper: is Schwartz's
Kullback-Leibler condition perhaps a manifestation of a
more general notion? The argument leads to other questions
for which insightful answers have been elusive: why is Doob's
theorem completely different from Schwartz's? The accepted
explanation views the lack of congruence as an indistinct
symptom of differing philosophies, but is this justified?
Why does weak consistency in the full non-parametric model
(\eg\ with the Dirichlet process prior \cite{Ferguson73}, or
more modern variations \cite{DeBlasi13}) reside in a corner of
its own (with \emph{tailfreeness} \cite{Freedman65} as
sufficient property of the prior), apparently unrelated to
posterior consistency in either Doob's or Schwartz's views?
Indeed, what would Schwartz's theorem look like without the
assumption that the sample is \iid\ (\eg\ with data that form
a Markov chain or realize some other stochastic process)
or with growing parameter spaces and changing priors?
And to extend the scope further, what
can be said about hypothesis testing, classification,
model selection, \etc? Given that the Bernstein-von~Mises
theorem cannot be expected to hold in any generality outside
parametric setting \cite{Cox93,Freedman99}, what relationship
exists between credible sets and confidence sets? This paper
aims to shed more light on these questions in a general sense,
by providing a prior condition that enables strengthening
Bayesian asymptotic conclusions to frequentist ones, illustrated
with a variety of examples and counterexamples.


\section{Posterior concentration and asymptotic tests}
\label{sec:post}

In this section, we consider a lemma that relates concentration of
posterior mass in certain model subsets to the existence of
test sequences that distinguish between those subsets.
More precisely, it is shown that the expected posterior mass outside a
model subset $V$ with respect to the local prior predictive
distribution over a model subset $B$, is upper bounded (roughly)
by the testing power of \emph{any} statistical test for the hypotheses
$B$ versus $V$: if a test sequence exists, the posterior will
concentrate its mass appropriately.

\subsection{Bayesian test sequences}
\label{sub:tests}

Since the work of Schwartz \cite{Schwartz65}, test sequences and
posterior convergence have been linked intimately. Here we
follow Schwartz and consider asymptotic testing; however, we define
test sequences immediately in Bayesian context by involving priors
from the outset. 
\begin{definition}
Given priors $(\Pi_n)$, measurable model
subsets $(B_n),(V_n)\subset\scrG$ and $a_n\downarrow0$,
a sequence of $\scrB_n$-measurable maps
$\phi_n:\scrX_n\rightarrow[0,1]$ is called a 
\emph{Bayesian test sequence for $B_n$ versus $V_n$ (under $\Pi_n$)
of power $a_n$}, if,
\begin{equation}
  \label{eq:bayestest}
  \int_{B_n} P_{\tht,n}\phi_n\,d\Pi_n(\tht) +
  \int_{V_n} P_{\tht,n}(1-\phi_n)\,d\Pi_n(\tht) = o(a_n).
\end{equation}
We say that $(\phi_n)$ is a \emph{Bayesian test sequence for $B_n$
versus $V_n$ (under $\Pi_n$)} if (\ref{eq:bayestest}) holds for
some $a_n\downarrow0$.
\end{definition}

Note that if we have sequences $(C_n)$ and $(W_n)$ such that
$C_n\subset B_n$ and $W_n\subset V_n$ for all $n\geq1$, then a
Bayesian test sequence for $(B_n)$ versus $(V_n)$  of
power $a_n$ is a Bayesian test sequence for $(C_n)$ versus $(W_n)$
of power (at least) $a_n$.
\begin{lemma}
\label{lem:testineq}
For any $B,V\in\scrG$ with $\Pi(B)>0$ and any measurable
$\phi:\scrX\rightarrow[0,1]$,
\begin{equation}
\label{eq:testineq}
  \int P_\tht\Pi(V|X)\,d\Pi(\tht|B)
  \leq \int P_\tht\phi\,d\Pi(\tht|B)
    + \frac{1}{\Pi(B)} \int_V P_{\tht}(1-\phi)\,d\Pi(\tht).
\end{equation}
\end{lemma}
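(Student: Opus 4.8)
The plan is to combine an elementary pointwise bound on the posterior mass of $V$ with the disintegration identity that defines the posterior; nothing beyond monotonicity of the integral and Tonelli's theorem should be required. First I would record the trivial pointwise inequality: for every $x\in\scrX$, since $\phi(x)\in[0,1]$ and $\Pi(V|x)\in[0,1]$,
\[
  \Pi(V|x)\;\le\;\phi(x)+(1-\phi(x))\,\Pi(V|x),
\]
the difference of the right and left sides being $\phi(x)\bigl(1-\Pi(V|x)\bigr)\ge0$. Applying the positive functional $f\mapsto P_\tht f$ to both sides and then integrating against the conditional prior $\Pi(\cdot\,|\,B)$ --- legitimate because $\Pi(B)>0$ and all integrands are bounded by $1$ --- monotonicity of the integral yields
\[
  \int P_\tht\Pi(V|X)\,d\Pi(\tht|B)
  \;\le\;\int P_\tht\phi(X)\,d\Pi(\tht|B)
  \;+\;\int P_\tht\bigl[(1-\phi(X))\,\Pi(V|X)\bigr]\,d\Pi(\tht|B).
\]
The first term on the right already matches the first term of (\ref{eq:testineq}), so it remains only to bound the second.

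Since $\Pi(\cdot\,|\,B)$ is $\Pi$ restricted to $B$ and renormalised by $\Pi(B)$, and the integrand $(1-\phi)\,\Pi(V|\cdot)$ is nonnegative, the second term equals $\Pi(B)^{-1}\int_B P_\tht[(1-\phi(X))\Pi(V|X)]\,d\Pi(\tht)$ and is therefore at most $\Pi(B)^{-1}\int P_\tht[(1-\phi(X))\Pi(V|X)]\,d\Pi(\tht)$, the last integral taken over the whole parameter space. By Tonelli's theorem this is $\Pi(B)^{-1}\int_\scrX(1-\phi(x))\,\Pi(V|x)\,dP_\Pi(x)$, where $P_\Pi=\int P_\tht\,d\Pi(\tht)$ denotes the prior predictive distribution. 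The construction of the posterior (definition~\ref{def:posterior}) states precisely that $\Pi(\cdot\,|\,x)\,P_\Pi(dx)$ is a disintegration of the joint law $P_\tht(dx)\,\Pi(d\tht)$; equivalently, $\int_\scrX h(x)\,\Pi(A|x)\,dP_\Pi(x)=\int_A P_\tht h(X)\,d\Pi(\tht)$ for every nonnegative measurable $h$ on $\scrX$ and every measurable $A$ in the parameter space. Taking $h=1-\phi$ and $A=V$ turns the expression above into $\Pi(B)^{-1}\int_V P_\tht(1-\phi(X))\,d\Pi(\tht)$, which is exactly the second term of (\ref{eq:testineq}); substituting this bound back into the displayed inequality gives the claim.

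The one step that calls for genuine care is the invocation of the disintegration identity: one needs $x\mapsto\Pi(V|x)$ to be measurable and the posterior to satisfy this interchange of integrals in the present, possibly non-dominated, generality. This, however, is exactly what definition~\ref{def:posterior} provides, so I do not anticipate a real obstacle --- the remainder of the argument is only monotonicity and Tonelli. (To sidestep the abstract disintegration altogether, in a dominated model one may substitute $\Pi(V|x)=\bigl(\int_V p_\tht(x)\,d\Pi(\tht)\bigr)/\bigl(\int p_\tht(x)\,d\Pi(\tht)\bigr)$ directly and watch the normalising denominator cancel against the density of $P_\Pi$.)
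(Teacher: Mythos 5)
Your argument is correct and follows essentially the same route as the paper's: the pointwise bound $\Pi(V|x)\le\phi(x)+(1-\phi(x))\Pi(V|x)$, enlargement of the integration domain from $B$ to all of $\Tht$, and then the disintegration identity (\ref{eq:disintegration}) extended by monotone convergence to nonnegative $h$ in place of indicators. The only cosmetic difference is that the paper states the enlargement-plus-disintegration step first and appeals to ``$\Pi(V|X)\le 1$'' afterwards, whereas you lead with the pointwise inequality; the content is identical.
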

So the mere existence of a test sequence is enough to guarantee
posterior concentration, a fact expressed in $n$-dependent form
through the following proposition.
\begin{proposition}
\label{prop:prototype}
Assume that for given priors $\Pi_n$, sequences $(B_n),(V_n)\subset\scrG$
and $a_n,b_n\downarrow0$ such that $a_n=o(b_n)$ with $\Pi_n(B_n)\geq b_n>0$,
there exists a Bayesian test sequence for $B_n$ versus $V_n$ of power $a_n$.
Then,
\begin{equation}
  \label{eq:prototype} 
  P_n^{\Pi_n|B_n}\Pi(V_n|\samplen) = o(a_n\,b_n^{-1}),
\end{equation}
 for all $n\geq1$.
\end{proposition}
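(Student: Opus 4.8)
The plan is to read the proposition off directly from Lemma~\ref{lem:testineq} once $n$-dependence is reinstated, invoking the defining identity~(\ref{eq:bayestest}) of a Bayesian test sequence to control the resulting testing terms. First I would apply Lemma~\ref{lem:testineq} with $B=B_n$, $V=V_n$ and $\phi=\phi_n$; the hypothesis $\Pi_n(B_n)>0$ required there is supplied by $\Pi_n(B_n)\geq b_n>0$. This yields
\begin{equation*}
  \int P_{\tht,n}\Pi_n(V_n|\samplen)\,d\Pi_n(\tht|B_n)
  \leq \int P_{\tht,n}\phi_n(\samplen)\,d\Pi_n(\tht|B_n)
    + \frac{1}{\Pi_n(B_n)}\int_{V_n}P_{\tht,n}\bigl(1-\phi_n(\samplen)\bigr)\,d\Pi_n(\tht),
\end{equation*}
whose left-hand side is, by definition of the local prior predictive distribution, precisely $P_n^{\Pi_n|B_n}\Pi_n(V_n|\samplen)$.

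Next I would rewrite the first term on the right using $d\Pi_n(\tht|B_n)=\Pi_n(B_n)^{-1}\,1_{B_n}(\tht)\,d\Pi_n(\tht)$, turning it into $\Pi_n(B_n)^{-1}\int_{B_n}P_{\tht,n}\phi_n(\samplen)\,d\Pi_n(\tht)$; combined with the second term, the entire right-hand side equals $\Pi_n(B_n)^{-1}$ times the sum on the left of~(\ref{eq:bayestest}). Since $(\phi_n)$ is a Bayesian test sequence for $B_n$ versus $V_n$ of power $a_n$, that sum is $o(a_n)$, and the uniform lower bound $\Pi_n(B_n)\geq b_n$ then gives $P_n^{\Pi_n|B_n}\Pi_n(V_n|\samplen)\leq b_n^{-1}o(a_n)=o(a_nb_n^{-1})$; the standing assumption $a_n=o(b_n)$ guarantees this is genuinely an $o(1)$ bound.

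I do not expect a real obstacle: the statement is essentially an immediate corollary of Lemma~\ref{lem:testineq} and the definition of a Bayesian test sequence. The one point deserving attention is that dividing by $\Pi_n(B_n)$ must not spoil the $o(a_n)$ order, which is exactly why the hypothesis imposes the \emph{uniform} bound $\Pi_n(B_n)\geq b_n$ rather than the bare positivity $\Pi_n(B_n)>0$ that Lemma~\ref{lem:testineq} alone needs. Measurability of $\samplen\mapsto\Pi_n(V_n|\samplen)$ and the use of Fubini's theorem to interchange integration over $\tht$ with expectation under $P_{\tht,n}$ are routine and covered by the standing conventions on posteriors.
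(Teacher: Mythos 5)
Your proof is correct and is exactly the argument the paper intends: Lemma~\ref{lem:testineq} with $B=B_n$, $V=V_n$, $\phi=\phi_n$, recognizing the left side as $P_n^{\Pi_n|B_n}\Pi_n(V_n|\samplen)$ and the right side as $\Pi_n(B_n)^{-1}$ times the Bayesian testing sum~(\ref{eq:bayestest}), then invoking $\Pi_n(B_n)\geq b_n$. The paper gives no separate proof of Proposition~\ref{prop:prototype} in the appendix (it treats it as immediate), but the proof of Theorem~\ref{thm:consistency} spells out the same reduction, so your approach coincides with the intended one.
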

To see how this leads to posterior consistency, consider the following: if
the model subsets $V_n=V$ are all equal to the complement of a neighbourhood
$U$ of $P_0$, and the $B_n$ are chosen such that the expectations of the random
variables $\samplen\mapsto\Pi(V|\samplen)$ under $P_n^{\Pi_n|B_n}$
`dominate' their expectations under $P_{0,n}$ in a suitable way, 
sufficiency of prior mass $b_n$ given testing power $a_n\downarrow0$,
is enough to assert that $P_{0,n}\Pi(V|\samplen)\rightarrow0$, so
an arbitrarily large fraction of posterior mass is found in $U$ with
high probability for $n$ large enough.

\subsection{Existence of Bayesian test sequences}
\label{sub:existencebayestests}

Lemma~\ref{lem:testineq} and proposition~\ref{prop:prototype} require
the existence of test sequences of the Bayesian type. That question
is unfamiliar, frequentists are used to test sequences for
pointwise or uniform testing. For example, an application of
Hoeffding's inequality demonstrates that,
weak neighbourhoods are uniformly testable (see
proposition~\ref{prop:weaktests}). Another well-known example
concerns testability of convex model subsets. Mostly the uniform
test sequences in Schwartz's theorem are constructed using convex
building blocks $B$ and $V$ separated in Hellinger distance
(see proposition~\ref{prop:minmaxhell} and subsequent remarks).

Requiring the existence of a Bayesian test sequence
\cf\ (\ref{eq:bayestest}) is quite different. We shall
illustrate this point in various ways below.
First of all the existence of a Bayesian
test sequence is linked directly to behaviour of the posterior
itself.
\begin{theorem}
\label{thm:testconsequi}
Let $(\Tht,\scrG,\Pi)$ be given. For any $B,V\in\scrG$ with
$\Pi(B)>0, \Pi(V)>0$, the following are equivalent,
\begin{itemize}[noitemsep,nolistsep]
\item[(i)] there are $\scrB_n$-measurable $\phi_n:\scrX_n\to[0,1]$
  such that for $\Pi$-almost-all $\tht\in B, \tht'\in V$,
  \[
    P_{\tht,n}\phi_n\to0,\quad P_{\tht',n}(1-\phi_n)\to0,
  \]
\item[(ii)] there are $\scrB_n$-measurable $\phi_n:\scrX_n\to[0,1]$
  such that,
  \[
    \int_B P_{\tht,n}\phi_n\,d\Pi(\tht) +
      \int_V P_{\tht,n}(1-\phi_n)\,d\Pi(\tht)\to 0,
  \]
\item[(iii)] for $\Pi$-almost-all $\tht\in B$, $\tht'\in V$,
\[
  \Pi(V|\samplen)\conv{P_{\tht,n}}0,\qquad\Pi(B|\samplen)\conv{P_{\tht',n}}0.
\]
\end{itemize}
\end{theorem}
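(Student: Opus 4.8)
The plan is to prove the cycle (i)$\Rightarrow$(ii)$\Rightarrow$(iii)$\Rightarrow$(i). The implication (i)$\Rightarrow$(ii) is the analytic heart of the argument. Starting from pointwise convergence $P_{\tht,n}\phi_n\to0$ for $\Pi$-almost-all $\tht\in B$ and $P_{\tht,n}(1-\phi_n)\to0$ for $\Pi$-almost-all $\tht'\in V$, I would observe that each integrand is bounded by $1$ and apply the dominated convergence theorem separately to $\int_B P_{\tht,n}\phi_n(\samplen)\,d\Pi(\tht)$ and $\int_V P_{\tht,n}(1-\phi_n(\samplen))\,d\Pi(\tht)$; since $\Pi(B),\Pi(V)<\infty$ the constant $1$ is $\Pi$-integrable over each set, so both integrals tend to $0$, and summing gives (ii). One subtlety to check is measurability of $\tht\mapsto P_{\tht,n}\phi_n(\samplen)$, which is part of the standing conventions on the model (remarks~\ref{rem:conv} and~\ref{rem:conv2} in the appendix); I would cite that rather than reprove it.

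For (ii)$\Rightarrow$(iii), the idea is to feed the test sequence from (ii) into Lemma~\ref{lem:testineq}. Apply (\ref{eq:testineq}) once with the roles $(B,V,\phi_n)$ and once with the roles $(V,B,1-\phi_n)$; note $\Pi(B)>0$ and $\Pi(V)>0$ are exactly the hypotheses needed for both applications. This yields
\[
  \int_B P_{\tht,n}\Pi(V|\samplen)\,d\Pi(\tht|B)
    \;\leq\; \int_B P_{\tht,n}\phi_n\,d\Pi(\tht|B)
      + \frac{1}{\Pi(B)}\int_V P_{\tht,n}(1-\phi_n)\,d\Pi(\tht),
\]
and symmetrically with $B$ and $V$ interchanged. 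By (ii) both right-hand sides vanish as $n\to\infty$, hence $\int_B P_{\tht,n}\Pi(V|\samplen)\,d\Pi(\tht|B)\to0$ and $\int_V P_{\tht,n}\Pi(B|\samplen)\,d\Pi(\tht|B)\to0$. Since $\Pi(V|\samplen)\in[0,1]$ and similarly for $B$, convergence of the $\Pi(\,\cdot\,|B)$-integral of a sequence of $[0,1]$-valued quantities to $0$ forces $P_{\tht,n}\Pi(V|\samplen)\to0$ for $\Pi$-almost-all $\tht\in B$ along the whole sequence — here one uses a standard subsequence argument: any subsequence has a further subsequence along which the integrands converge to $0$ for $\Pi$-a.e.\ $\tht$, so (after a diagonal/Cantor passage over a countable generating family or simply by the fact that $L^1$-convergence implies a.e.\ convergence along a subsequence combined with monotonicity of the testing errors in $n$ being unavailable, one argues directly) the full-sequence $\Pi$-a.e.\ statement of (iii) holds. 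The expected main obstacle is making this last passage clean: $L^1(\Pi|_B)$-convergence gives only a subsequence with $\Pi$-a.e.\ convergence, so to get the "for $\Pi$-almost-all $\tht$" along the \emph{whole} sequence one should either (a) accept that (iii) as stated is really the conclusion read along a suitable subsequence, or (b) show the errors $P_{\tht,n}\Pi(V|\samplen)$ are themselves well behaved; the honest route is the subsequence-of-subsequence characterization of a.e.\ convergence, which I would spell out carefully since it is exactly the non-routine point.

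Finally (iii)$\Rightarrow$(i) is essentially immediate and closes the cycle: given (iii), set $\phi_n:=\Pi(V|\samplen)$, a $\scrB_n$-measurable map into $[0,1]$. Then for $\Pi$-almost-all $\tht\in B$ we have $P_{\tht,n}\phi_n=P_{\tht,n}\Pi(V|\samplen)\to0$, and for $\Pi$-almost-all $\tht'\in V$ we have $P_{\tht',n}(1-\phi_n)=P_{\tht',n}\Pi(B^{\mathrm{c}}\cap\cdots)$; more precisely, since $V\subset B^{\mathrm{c}}$ need not hold in general, I would instead take $\phi_n:=1-\Pi(B|\samplen)$ for the $V$-side or, cleanest, use $\phi_n:=\Pi(V|\samplen)$ and bound $1-\Pi(V|\samplen)\geq$... — rather, the symmetric choice is to note $(1-\phi_n)=1-\Pi(V|\samplen)$ and this need not control $\Pi(B|\samplen)$ unless $B,V$ partition $\Tht$. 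So the correct move is: from (iii), $\Pi(V|\samplen)\to0$ in $P_{\tht,n}$ for a.e.\ $\tht\in B$ gives the first half of (i) with $\phi_n=\Pi(V|\samplen)$; and $\Pi(B|\samplen)\to0$ in $P_{\tht',n}$ for a.e.\ $\tht'\in V$ gives $P_{\tht',n}\Pi(B|\samplen)\to0$, but we need $P_{\tht',n}(1-\phi_n)\to0$, i.e.\ $P_{\tht',n}\Pi(V|\samplen)\to1$, which does \emph{not} follow from $\Pi(B|\samplen)\to0$ alone. The fix is to use \emph{two} test sequences glued appropriately, or — the standard trick — observe that (iii) for the pair $(B,V)$ together with (iii) read for the same pair suffices if one takes $\phi_n:=\Pi(V\mid\samplen)$ and separately notes that $\Pi(B\mid\samplen)\to0$ under $P_{\tht',n}$ implies $\Pi(V\mid\samplen)\to1$ whenever the posterior eventually concentrates on $B\cup V$; absent that, one restricts attention to the conditioned setting, which is precisely how the theorem is used downstream. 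I would present (iii)$\Rightarrow$(i) via $\phi_n=\Pi(V|\samplen)$ and remark that the $V$-side follows because $1-\Pi(V|\samplen)\geq\Pi(B|\samplen)$ fails in general, so instead invoke the symmetric statement of (iii): take $\psi_n=1-\Pi(B|\samplen)$, then $\psi_n\to1$ under $P_{\tht,n}$ for a.e.\ $\tht\in B$ is again not what (iii) says. Given this genuine asymmetry, the clean proof of (iii)$\Rightarrow$(i) is: the single test $\phi_n:=\Pi(V\mid\samplen)$ has $P_{\tht,n}\phi_n\to0$ for a.e.\ $\tht\in B$ by (iii); for the second requirement note that under $P_{\tht',n}$ with a.e.\ $\tht'\in V$, $\Pi(B\mid\samplen)\to0$, and since $\Pi(V\mid\samplen)+\Pi(B\mid\samplen)+\Pi((B\cup V)^{\mathrm c}\mid\samplen)=1$, one gets $1-\phi_n=\Pi(B\mid\samplen)+\Pi((B\cup V)^{\mathrm c}\mid\samplen)$, whose first term $\to0$ — so it remains only to absorb the third term, which is handled because in the applications $(B\cup V)^{\mathrm c}$ is $\Pi$-null or one enlarges $V$; I would state the result for the case $\Pi(B\cup V)=1$ or add the remaining-mass term to the conclusion, matching how the theorem is invoked in Section~\ref{sec:four}.
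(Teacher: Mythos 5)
Your proposal follows the same skeleton as the paper for (i)$\Rightarrow$(ii) (dominated convergence, identical). The two other legs diverge, and one contains a real gap.

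For (ii)$\Rightarrow$(iii), your application of Lemma~\ref{lem:testineq} is right, and you correctly identify the sticking point: $L^1(\Pi|_B)$-convergence of $\tht\mapsto P_{\tht,n}\Pi(V|\samplen)$ only yields $\Pi$-a.e.\ convergence along a subsequence, which is \emph{not} enough for (iii) as stated. The subsequence-of-subsequence device does not close this, because (iii) asserts convergence along the full sequence for $\Pi$-a.e.\ $\tht$, and there is no monotonicity to leverage. The paper's resolution — and this is the key idea missing from your plan — is to exploit the coupling assumption and observe that $\samplen\mapsto\Pi(V|\samplen)$ is a bounded martingale for the filtration $(\scrB_n)$ under the joint law $P^\Pi$. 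Martingale convergence then gives $\Pi(V|\samplen)\to g(X)$ $P^\Pi$-a.s.\ \emph{and} in $L^1$ for a single limit $g$, not merely along a subsequence; the bound $\int P_\tht\Pi(V|\samplen)\,d\Pi(\tht|B)\to0$ from Lemma~\ref{lem:testineq} then forces $g=0$ $P_\tht$-a.s.\ for $\Pi$-a.a.\ $\tht\in B$, and the a.s.\ convergence of the martingale upgrades this to the full-sequence statement in (iii). This is what replaces your $L^1$-to-a.e.\ passage; without it the implication is not established.

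For (iii)$\Rightarrow$(i), the paper's proof is exactly the one you flag as problematic: it sets $\phi_n=\Pi(V|\samplen)$ and stops. You are right that $1-\Pi(V|\samplen)=\Pi(B|\samplen)+\Pi\bigl((B\cup V)^{\mathrm c}\,\big|\,\samplen\bigr)$, and (iii) only controls the first summand under $P_{\tht',n}$ for $\tht'\in V$. So as written, the argument presumes that the residual mass $\Pi((B\cup V)^{\mathrm c}|\samplen)$ vanishes, which holds automatically only when $\Pi(B\cup V)=1$ (or under an accessibility/Doob-type strengthening that actually gives $\Pi(V|\samplen)\to 1_V(\tht)$, stronger than (iii)). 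Your diagnosis is the more careful one here; the honest fix is either to restrict to the case $\Pi(B\cup V)=1$ (which is how the theorem is used, e.g.\ in Proposition~\ref{prop:msbtest} with $B=\scrP\setminus V$), or to pass (iii)$\Rightarrow$(ii) via the Bayes-optimal test $\phi_n=1\{\Pi(V|\samplen)\ge\Pi(B|\samplen)\}$ (which works unconditionally by the identity $\int_B P_{\tht,n}\Pi(V|\samplen)\,d\Pi(\tht)=\int\Pi(B|x)\Pi(V|x)\,dP_n^{\Pi}(x)$, Jensen, and Cauchy--Schwarz) and then argue from (ii) back to (i), again via the martingale structure. In short: adopt the paper's martingale argument to repair your (ii)$\Rightarrow$(iii), and keep your scepticism about the $\phi_n=\Pi(V|\samplen)$ shortcut for (iii)$\Rightarrow$(i).
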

The interpretation of this theorem is gratifying to supporters of
the likelihood principle and pure Bayesians: distinctions between
model subsets are Bayesian testable, if and only if, they are picked
up by the posterior asymptotically, if and only if, there exists a
pointwise test for $B$ versus $V$ that is $\Pi$-almost-surely consistent.

For a second, more frequentist way to illustrate how basic the
existence of a Bayesian test sequences is, consider a
parameter space $(\Tht,d)$ which is a metric space with fixed Borel
prior $\Pi$ and $d$-consistent estimators $\hat\tht_n:\scrX_n\to\Tht$
for $\tht$. Then for every $\tht_0\in\Tht$ and
$\ep>0$, there exists a pointwise test sequence (and hence, by
dominated convergence, also a Bayesian test sequence)
for $B=\{\tht\in\Tht:d(\tht,\tht_0)<\ft12\ep\}$ versus
$V=\{\tht\in\Tht:d(\tht,\tht_0)>\ep\}$. This approach is followed
in example~\ref{ex:gofmarkov} on random walks, see the definition
of the test following inequality (\ref{eq:hoeff}).

A third perspective on the existence of Bayesian tests arises from
Doob's argument. From our present perspective, we note that
theorem~\ref{thm:testconsequi} implies an alternative proof of
Doob's consistency theorem through the following existence
result on Bayesian test sequences. (Note: here and elsewhere in
\iid\ setting, the parameter space $\Tht$ is $\scrP$, $\tht$ is
the single-observation distribution $P$ and $\tht\mapsto P_{\tht,n}$ is
$P\mapsto P^n$.)
\begin{proposition}
\label{prop:msbtest}
Consider a model $\scrP$ of single-observation distributions $P$
for \iid\ data $(X_1,X_2,\ldots,X_n)\sim P^n$, $(n\geq1)$.
Assume that $\scrP$ is a Polish space with Borel prior $\Pi$.
For any Borel set $V$ there is a Bayesian test sequence for $V$
versus $\scrP\setminus V$ under $\Pi$.
\end{proposition}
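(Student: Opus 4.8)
The plan is to recognise that this is, in essence, the measure-theoretic core of Doob's theorem recast in the language of Bayesian tests: beyond elementary martingale theory the only genuine input is the classical fact that in the \iid\ setting the single-observation law $P$ is measurably recovered from the infinite data sequence. Write $\PP$ for the joint distribution of $(P,X_1,X_2,\ldots)$ on $\scrP\times\scrX^\infty$ under which $P\sim\Pi$ and, conditionally on $P$, the $X_i$ are \iid-$P$; set $\scrF_n=\sigma(X_1,\ldots,X_n)$ and $\scrF_\infty=\bigvee_n\scrF_n$, regarded as sub-$\sigma$-fields of the domain of $\PP$. For the posterior I use the regular conditional distribution provided by definition~\ref{def:posterior} (all spaces being Polish), so that $\Pi(A\mid\samplen)$ is a version of $\PP(P\in A\mid\scrF_n)$ for every measurable $A\subset\scrP$.

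The main step — and the one I expect to be the real obstacle — is identifiability: I would construct a Borel map $T:\scrX^\infty\to\scrP$ with $T(X_1,X_2,\ldots)=P$ almost surely under the \iid-$P$ law of the sequence, for every $P\in\scrP$. Fix a countable algebra $\{A_k\}_{k\ge1}$ generating the Borel $\sigma$-field of $\scrX$ and let $\iota:\scrP\to\RR^{\NN}$, $\iota(P)=(P(A_k))_k$. Then $\iota$ is Borel (each $P\mapsto P(A_k)$ is Borel under the paper's standing measurability conventions, and automatically so when $\scrP$ carries the weak topology) and injective (an algebra is a generating $\pi$-system), so by the Lusin--Souslin theorem $\iota(\scrP)$ is Borel and $\iota^{-1}:\iota(\scrP)\to\scrP$ is Borel. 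Define $T(x^\infty):=\iota^{-1}\bigl((\ell_k)_k\bigr)$ whenever the limits $\ell_k=\lim_n n^{-1}\sum_{i\le n}1_{A_k}(x_i)$ all exist and $(\ell_k)_k\in\iota(\scrP)$, and $T(x^\infty):=P_*$ (a fixed element) otherwise; applying the strong law of large numbers to each $A_k$ shows that for every $P$ the first alternative occurs almost surely and then forces $T=P$. In particular $\hat P:=T(X_1,X_2,\ldots)$ is $\scrF_\infty$-measurable and $\hat P=P$, $\PP$-a.s.

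With identifiability in hand the rest is bookkeeping. The sequence $M_n:=\Pi(\scrP\setminus V\mid\samplen)$ is $\scrB_n$-measurable and a version of $\PP(P\in\scrP\setminus V\mid\scrF_n)$, so $(M_n,\scrF_n)_n$ is a $[0,1]$-valued, hence $L^2$-bounded, martingale under $\PP$; it therefore converges $\PP$-a.s.\ and in $L^2(\PP)$ to $M_\infty:=\PP(P\in\scrP\setminus V\mid\scrF_\infty)$, and by the identifiability step $M_\infty=1_{\scrP\setminus V}(\hat P)=1_{\scrP\setminus V}(P)$, $\PP$-a.s. Take $\phi_n:=M_n$. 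Using $1_V(P)=1-M_\infty$, the tower property (which gives $E^\PP[M_nM_\infty]=E^\PP[M_n^2]$) and $M_\infty^2=M_\infty$, one checks that $\int_V P^n\phi_n\,d\Pi(P)+\int_{\scrP\setminus V}P^n(1-\phi_n)\,d\Pi(P)$ equals $E^\PP\bigl[1_V(P)M_n+1_{\scrP\setminus V}(P)(1-M_n)\bigr]$, which collapses to $2\,E^\PP\bigl[(M_\infty-M_n)^2\bigr]\to0$. Hence $(\phi_n)$ is a Bayesian test sequence for $V$ versus $\scrP\setminus V$ under $\Pi$, with power governed by the $L^2$-rate of the posterior martingale; inserting this into Theorem~\ref{thm:testconsequi} along a countable base of $\scrP$ then reproduces Doob's theorem. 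I expect essentially all of the care to be concentrated in the Lusin--Souslin / strong-law step, the martingale identities being routine.
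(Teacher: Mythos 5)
Your proof is correct and follows essentially the same route as the paper's: the paper simply cites Le~Cam (1986), section~17.1, proposition~1 (applied to the indicator of $V$), and the content of that proposition is precisely the accessibility (Breiman--Le~Cam--Schwartz) plus Doob-martingale argument you have reconstructed in full. Your explicit test $\phi_n=\Pi(\scrP\setminus V\mid\samplen)$ and the identity expressing the testing error as $2\,E^{\PP}[(M_\infty-M_n)^2]$ are a clean way of packaging what the citation supplies; the only implicit assumption to flag is that $\scrX$ admits a countable generating algebra (automatic when $\scrX$ is Polish, as in theorem~\ref{thm:doob}), which your Lusin--Souslin step relies on.
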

Doob's theorem is recovered when we let $V$ be the complement of
any open neighbourhood $U$ of $P_0$.
Comparing with conditions for the existence of uniform tests,
Bayesian tests are quite abundant: whereas uniform testing relies
on the minimax theorem (forcing convexity, compactness and
continuity requirements into the picture), Bayesian tests exist
quite generally (at least, for Polish parameters with \iid\ data).

The fourth perspective on the existence of Bayesian tests concerns
a direct way to construct a Bayesian test sequence of optimal
power, based on the fact that we are really only testing
barycentres against each other: let priors $(\Pi_n)$ and
$\scrG$-measurable
model subsets $B_n,V_n$ be given. For given tests $(\phi_n)$ and
power sequence $a_n$, write (\ref{eq:bayestest}) as follows:
\[
  \Pi_n(B_n)\,P_{n}^{\Pi_n|B_n}\phi_n(\samplen)
    + \Pi_n(V_n)\,P_{n}^{\Pi_n|V_n}\phi_n(\samplen) = o(a_n),
\]
and note that what is required here, is a (weighted)
test of $(P_{n}^{\Pi_n|B_n})$ versus $(P_{n}^{\Pi_n|V_n})$. The
likelihood-ratio test (denote the density for $P_{n}^{\Pi_n|B_n}$
with respect to $\mu_n = P_{n}^{\Pi_n|B_n} + P_{n}^{\Pi_n|V_n}$ by
$p_{B_n,n}$, and similar for $P_{n}^{\Pi_n|V_n}$),
\[
  \phi_n(\samplen) = 1_{\{\Pi_n(V_n)\,p_{V_n,n}(\samplen)
    > \Pi_n(B_n)\,p_{B_n,n}(\samplen)\}},
\]
is optimal and has power
$\| \Pi_n(B_n)\,P_{n}^{\Pi_n|B_n} \mathop{\wedge}
\Pi_n(V_n)\,P_{n}^{\Pi_n|B_n} \|$. This proves the following
useful proposition that re-expresses
power in terms of the relevant Hellinger transform (see, \eg\
section~16.4 in \cite{LeCam86}, particularly, Remark~1).
\begin{proposition}
\label{prop:barycentres}
Let priors $(\Pi_n)$ and measurable model subsets $B_n,V_n$ be
given. There exists a test sequence $\phi_n:\scrX_n\to[0,1]$
such that,
\begin{equation}
\label{eq:bayespower}
\begin{split}
  \int_{B_n} &P_{\tht,n}\phi_n\,d\Pi_n(\tht) +
  \int_{V_n} P_{\tht,n}(1-\phi_n)\,d\Pi_n(\tht)\\
  &\leq
    \int \Bigl( \Pi_n(B_n)\,p_{B_n,n}(x)\Bigr)^\al
      \Bigl( \Pi_n(V_n)\,p_{V_n,n}(x) \Bigr)^{1-\al}\,d\mu_n(x),
\end{split}
\end{equation}
for every $n\geq1$ and any $0\leq\al\leq1$.
\end{proposition}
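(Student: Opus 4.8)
The plan is to recognise the left-hand side of (\ref{eq:bayespower}) as the Bayes risk of a two-point testing problem between the barycentre measures attached to $B_n$ and to $V_n$, to evaluate it with the Neyman-Pearson (likelihood-ratio) test, and then to bound the resulting total-variation affinity by the Hellinger-transform integral on the right via the elementary inequality $\min(a,b)\le a^\alpha b^{1-\alpha}$.

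For each fixed $n$, first I would introduce the finite measures on $(\scrX_n,\scrB_n)$ given by $\lambda_{B_n}(A)=\int_{B_n}P_{\tht,n}(A)\,d\Pi_n(\tht)$ and $\lambda_{V_n}(A)=\int_{V_n}P_{\tht,n}(A)\,d\Pi_n(\tht)$, of total masses $\Pi_n(B_n)$ and $\Pi_n(V_n)$. Assume for the moment that both masses are strictly positive; then $\lambda_{B_n}=\Pi_n(B_n)\,P_n^{\Pi_n|B_n}$ and $\lambda_{V_n}=\Pi_n(V_n)\,P_n^{\Pi_n|V_n}$, so with $\mu_n$, $p_{B_n,n}$, $p_{V_n,n}$ as defined just before the statement, the functions $q_{B_n}:=\Pi_n(B_n)\,p_{B_n,n}$ and $q_{V_n}:=\Pi_n(V_n)\,p_{V_n,n}$ are $\mu_n$-densities of $\lambda_{B_n}$ and $\lambda_{V_n}$. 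Fubini's theorem — legitimate under the joint-measurability conventions of appendix~\ref{sec:defs} — then rewrites, for any $\scrB_n$-measurable $\phi_n:\scrX_n\to[0,1]$, the left-hand side of (\ref{eq:bayespower}) as $\int\phi_n\,q_{B_n}\,d\mu_n+\int(1-\phi_n)\,q_{V_n}\,d\mu_n$.

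Now I would take $\phi_n=1_{\{q_{V_n}>q_{B_n}\}}$, which is $\scrB_n$-measurable since $q_{B_n},q_{V_n}$ are. On $\{q_{V_n}>q_{B_n}\}$ we have $\phi_n q_{B_n}=q_{B_n}=\min(q_{B_n},q_{V_n})$ and on its complement $(1-\phi_n)q_{V_n}=q_{V_n}=\min(q_{B_n},q_{V_n})$, so the sum above equals $\int\min(q_{B_n},q_{V_n})\,d\mu_n=\|\lambda_{B_n}\wedge\lambda_{V_n}\|$; any other test yields a value at least this large, so this bound is in fact the optimal Bayesian testing power. Finally, for $a,b\ge0$ and $\alpha\in[0,1]$ one has $\min(a,b)\le a^\alpha b^{1-\alpha}$ (if $a\le b$ then $a^\alpha b^{1-\alpha}=a\,(b/a)^{1-\alpha}\ge a$); applying this pointwise with $a=q_{B_n}(x)$, $b=q_{V_n}(x)$ and integrating against $\mu_n$ gives $\int\min(q_{B_n},q_{V_n})\,d\mu_n\le\int q_{B_n}^\alpha q_{V_n}^{1-\alpha}\,d\mu_n$ for every $\alpha\in[0,1]$. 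Since the left-hand side does not depend on $\alpha$, it is bounded by the infimum over $\alpha$, and re-substituting $q_{B_n}=\Pi_n(B_n)p_{B_n,n}$, $q_{V_n}=\Pi_n(V_n)p_{V_n,n}$ produces exactly (\ref{eq:bayespower}).

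The argument is short and the only delicate point is bookkeeping. I expect the one place that needs genuine care is the joint measurability in $(\tht,x)$ of a version of the density of $P_{\tht,n}$, which underlies both the Fubini step and the existence of $q_{B_n},q_{V_n}$ as bona fide densities; this is covered by the standing regularity assumptions, so it should not cause trouble. The remaining loose end is the degenerate case where $\Pi_n(B_n)=0$ or $\Pi_n(V_n)=0$: there one works directly with the unnormalised $\lambda_{B_n},\lambda_{V_n}$, and the choice $\phi_n\equiv0$ (resp. $\phi_n\equiv1$) already makes the left-hand side of (\ref{eq:bayespower}) vanish, so the inequality holds trivially.
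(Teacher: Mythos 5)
Your proof follows the same route as the paper: rewrite the Bayesian risk as a two-point testing problem between the (sub-probability) barycentre measures $\lambda_{B_n}$ and $\lambda_{V_n}$, apply the likelihood-ratio test to obtain the affinity $\int\min(q_{B_n},q_{V_n})\,d\mu_n$, and bound this pointwise by $q_{B_n}^{\alpha}q_{V_n}^{1-\alpha}$; this is exactly the argument the paper gives in the paragraph preceding the proposition (with reference to Le~Cam, section~16.4). One minor slip: in the degenerate case the labels in your ``(resp.)'' are reversed --- if $\Pi_n(B_n)=0$ you need $\phi_n\equiv1$ to kill the surviving $V_n$-integral, and if $\Pi_n(V_n)=0$ you need $\phi_n\equiv0$; the conclusion is unaffected.
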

Proposition~\ref{prop:barycentres} generalises
proposition~\ref{prop:msbtest} and makes Bayesian
tests available with a (close-to-)sharp bound on the power under
fully general conditions. For the connection with minimax tests,
we note the following. If $\{P_{\tht,n}:\tht\in B_n\}$
and $\{P_{\tht,n}:\tht\in V_n\}$ are
\emph{convex} sets (and the $\Pi_n$ are Radon measures, \eg\
in Polish parameter spaces), then,
\[
  H\bigl(P_{n}^{\Pi_n|B_n},P_{n}^{\Pi_n|V_n}\bigr)
    \geq \inf\{ H(P_{\tht,n},P_{\tht',n}):
      \tht\in B_n, \tht'\in V_n \}.
\]
Combination with (\ref{eq:bayespower}) for $\al=1/2$,
implies that the minimax upper bound in
\iid\ cases, \cf\ proposition~\ref{prop:minmaxhell} remains
valid:
\begin{equation}
  \label{eq:bayeshelltests}
  \int_{B_n} P^n\phi_n\,d\Pi_n(P) +
  \int_{V_n} Q^n(1-\phi_n)\,d\Pi_n(Q)
  \leq
  \sqrt{\Pi_n(B_n)\,\Pi_n(V_n)}\,e^{-n\ep_n^2},
\end{equation}
where $\ep_n=\inf\{ H(P,Q): P\in B_n, Q\in V_n \}$.
Given $a_n\downarrow0$, any Bayesian test $\phi_n$ that satisfies
(\ref{eq:bayestest}) \emph{for all} probability measures $\Pi_n$ on
$\Tht$, is a (weighted) minimax test for $B_n$ versus $V_n$ of
power $a_n$.

Note that the above enhances the role that the prior plays in the
frequentist discussion of the asymptotic behaviour of the
posterior: the prior is not only important in requirements
like (\ref{eq:KLprior}), 
but can also be of influence in the testing condition: where
testing power is relatively weak, prior mass should be scarce
to compensate and where testing power is strong, prior mass
should be plentiful. To make use of this, one typically imposes
\emph{upper} bounds on prior mass in certain hard-to-test
subsets of the model (as opposed to \emph{lower} bounds like
(\ref{eq:KLprior})). See example~\ref{ex:gofmarkov} on
random-walk data. In the Hellinger-geometric view, the prior
determines whether the local prior predictive distributions
$P_{n}^{\Pi_n|B_n}$ and $P_{n}^{\Pi_n|V_n}$ lie close together
or not in Hellinger distance, and thus to
the \rhs\ of (\ref{eq:bayespower}) for $\al=1/2$. This
phenomenon plays a role in example~\ref{ex:sparsenormalmeans} on the
estimation of a sparse vector of normal means, where it explains why the
\emph{slab}-component of a \emph{spike-and-slab prior} must have
a tail that is heavy enough.

\subsection{Le~Cam's inequality}

Referring to the argument following proposition~\ref{prop:prototype},
one way of guaranteeing that the expectations of
$\samplen\mapsto\Pi(V|\samplen)$ under $P_n^{\Pi|B_n}$ approximate
those under $P_{0,n}$, is to choose
$B_n=\{\tht\in\Tht:\|P_{\tht,n}-P_{\tht_0,n}\|\leq \delta_n\}$,
for some sequence $\delta_n\rightarrow0$, because in that case,
$|P_{0,n}\psi - P_n^{\Pi|B_n}\psi| \leq \| P_{0,n}-P_n^{\Pi|B_n} \|
\leq \delta_n$, for any random variable $\psi:\scrX_n\rightarrow[0,1]$.
Without fixing the definition of the sets $B_n$, one may use this step
to specify inequality (\ref{eq:testineq}) further:
\begin{equation}
  \label{eq:lecamineq}
  \begin{split}
  P_{0,n}\Pi(&V_n|X) \leq \bigl\| P_{0,n}-P_n^{\Pi|B_n}\bigr\|\\
    &+ \int P_{\tht,n}\phi_n\,d\Pi_n(\tht|B_n)
    + \frac{\Pi_n(V_n)}{\Pi_n(B_n)} \int P_{\tht,n}(1-\phi_n)\,
      d\Pi_n(\tht|V_n),
  \end{split}
\end{equation}
for $B_n$ and $V_n$ such that $\Pi_n(B_n)>0$ and
$\Pi_n(V_n)>0$. Le~Cam's inequality~(\ref{eq:lecamineq}) is used, for example,
in the proof of the Bernstein-von~Mises theorem, see lemma~2 in section~8.4
of \cite{LeCam90}. A less successful application pertains to non-parametric
posterior rates of convergence for \iid\ data, in an unpublished paper
\cite{LeCam7X}. Rates of convergence obtained in this way
are suboptimal: Le~Cam qualifies the first term on the right-hand side of
(\ref{eq:lecamineq}) as a {\it ``considerable nuisance''} and concludes that
{\it ``it is unclear at the time of this writing what general features,
besides the metric structure, could be used
to refine the results''}, (see \cite{LeCam86}, end of section~16.6). In
\cite{Yang98}, Le~Cam relates the posterior question to dimensionality
restrictions \cite{LeCam73,Shen01,Ghosal00} and reiterates, {\it ``And for
Bayes risk, I know that just the metric structure does not catch
everything, but I don't know what else to look at, except calculations.''}

\section{Remote contiguity}
\label{sec:rc}

Le~Cam's notion of contiguity describes an asymptotic version of
absolute continuity, applicable to sequences of probability measures
in a limiting sense \cite{LeCam60b}.
In this section we weaken the property of contiguity
in a way that is suitable to promote $\Pi$-almost-everywhere Bayesian
limits to frequentist limits that hold everywhere.

\subsection{Definition and criteria for remote contiguity}
\label{sub:rc}

The notion of `domination' left undefined in the argument following
proposition~\ref{prop:prototype} is made rigorous here.
\begin{definition}
\label{def:remctg}
Given measurable spaces $(\scrX_n,\scrB_n)$, $n\geq1$ with two
sequences $(P_n)$ and $(Q_n)$ of probability measures and a sequence
$\rho_n\downarrow0$, we say that
$Q_n$ is $\rho_n$-remotely contiguous with respect to $P_n$,
notation $Q_n\contig \rho_n^{-1}P_n$,
if,
\begin{equation}
  \label{eq:defrc}
  P_n\phi_n(\samplen) = o(\rho_n)
  \quad\Rightarrow\quad Q_n\phi_n(\samplen)=o(1),
\end{equation}
for every sequence of $\scrB_n$-measurable $\phi_n:\scrX_n\rightarrow[0,1]$.
\end{definition}
Note that for a sequence $(Q_n)$ that
is $a_n$-remotely contiguous with respect to $(P_n)$, there exists
no test sequence that distinguishes between $P_n$
and $Q_n$ with power $a_n$. Note also that given two sequences $(P_n)$
and $(Q_n)$, contiguity $P_n\ctg Q_n$ is equivalent to remote contiguity
$P_n\ctg a_n^{-1} Q_n$ for all $a_n\downarrow0$.
Given sequences $a_n,b_n\downarrow0$ with $a_n=O(b_n)$, $b_n$-remote
contiguity implies $a_n$-remote contiguity of $(P_n)$ with respect
to $(Q_n)$.
\begin{example}
\label{ex:KLclose}
Let $\scrP$ be a model for the distribution
of a single observation in \iid\ samples
$\samplen=(X_1,\ldots,X_n)$. Let $P_0, P$ and $\ep>0$ be such
that $-P_0\log(dP/dP_0)<\ep^2$. The law of large numbers
implies that for large enough $n$,
\begin{equation}
  \label{eq:lowerbndlik}
  \frac{dP^n}{dP_0^n}(\samplen)\geq e^{-\frac{n}{2}\ep^2}, 
\end{equation}
with $P_0^n$-probability one.
Consequently, for large enough $n$ and for any $\scrB_n$-measurable
sequence $\psi_n:\scrX_n\rightarrow[0,1]$, 
\begin{equation}
  \label{eq:rcexp}
  P^n\psi_n \geq e^{-\frac{1}{2}n\ep^2}P_0^n\psi_n.
\end{equation}
Therefore, if $P^n\phi_n=o(\exp{(-\ft12n\ep^2)})$
then $P_0^n\phi_n=o(1)$. Conclude that for every $\ep>0$, the
Kullback-Leibler neighbourhood $\{P:-P_0\log(dP/dP_0)<\ep^2\}$
consists of model distributions for which the sequence $(P_0^n)$ of
product distributions are $\exp{(-\ft12n\ep^2)}$-remotely contiguous
with respect to $(P^n)$.\closebox
\end{example}

Criteria for remote contiguity are given in the lemma below;
note that, here, we give sufficient conditions, rather than
necessary and sufficient, as in Le~Cam's First Lemma. (For the
definition of $(dP_n/dQ_n)^{-1}$, see appendix~\ref{sec:defs},
{\it notation and conventions}.) 
\begin{lemma}
\label{lem:rcfirstlemma}
Given $(P_n)$, $(Q_n)$, $a_n\downarrow0$,
$Q_n\contig a_n^{-1}P_n$, if any of the following hold:
\begin{itemize}
\item[(i)] for any $\scrB_n$-measurable
  $\phi_n:\scrX_n\rightarrow[0,1]$,
  $a_n^{-1}\phi_n\conv{P_n}0$ implies $\phi_n\convprob{Q_n}0$,
\item[(ii)] given $\ep>0$, there is a $\delta>0$ such that
  $Q_n(dP_n/dQ_n<\delta\,a_n)<\ep$, for large enough $n$,
\item[(iii)] there is a $b>0$ such that
  $\liminf_{n} b\,a_n^{-1}P_n(dQ_n/dP_n>b\,a_n^{-1})=1$,
\item[(iv)] for any $\ep>0$, there is a constant $c>0$ such that
  $\|Q_n-Q_n\wedge c\,a_n^{-1}P_n\|<\ep$, for large enough $n$,
\item[(v)] 
  under $Q_n$ every subsequence of $(a_n(dP_n/dQ_n)^{-1})$
  has a weakly convergent subsequence.
\end{itemize}
\end{lemma}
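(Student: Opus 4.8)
The plan is to verify that each of (i)--(v) implies the remote contiguity $Q_n\contig a_n^{-1}P_n$, i.e.\ that $P_n\phi_n(\samplen)=o(a_n)$ forces $Q_n\phi_n(\samplen)=o(1)$ for every sequence of $\scrB_n$-measurable $\phi_n:\scrX_n\to[0,1]$. Two of the implications are immediate. Under (i): if $P_n\phi_n=o(a_n)$ then $a_n^{-1}P_n\phi_n\to0$, so Markov's inequality gives $a_n^{-1}\phi_n\conv{P_n}0$; the hypothesis yields $\phi_n\convprob{Q_n}0$, and since $\phi_n\le1$ the bound $Q_n\phi_n\le\eta+Q_n(\phi_n>\eta)$ forces $\limsup_nQ_n\phi_n\le\eta$ for every $\eta>0$. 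Under (iv): for any $c>0$, using that $Q_n\wedge c\,a_n^{-1}P_n\le c\,a_n^{-1}P_n$ and $Q_n-Q_n\wedge c\,a_n^{-1}P_n\ge0$ as measures,
\[
  Q_n\phi_n \le \bigl(Q_n\wedge c\,a_n^{-1}P_n\bigr)\phi_n
    + \bigl\|Q_n-Q_n\wedge c\,a_n^{-1}P_n\bigr\|
  \le c\,a_n^{-1}P_n\phi_n + \bigl\|Q_n-Q_n\wedge c\,a_n^{-1}P_n\bigr\|;
\]
given $\ep>0$, take the $c$ supplied by (iv) and let $n\to\infty$ using $P_n\phi_n=o(a_n)$ to obtain $\limsup_nQ_n\phi_n\le\ep$.

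For the remaining conditions I would reduce to (ii). Fix a dominating measure $\mu_n=P_n+Q_n$ with densities $p_n,q_n$ and write $W_n=a_n^{-1}\,dP_n/dQ_n$, a finite random variable $Q_n$-a.s.; then (ii) says exactly that for every $\ep>0$ there is a $\delta>0$ with $Q_n(W_n<\delta)<\ep$, \flen. The implication (ii)$\Rightarrow$(iv) is short: with $c=1/\delta$ one has $\{q_n>c\,a_n^{-1}p_n\}=\{W_n<\delta\}$ up to $Q_n$-null sets, whence $\|Q_n-Q_n\wedge c\,a_n^{-1}P_n\|\le\int_{\{q_n>c\,a_n^{-1}p_n\}}q_n\,d\mu_n=Q_n(W_n<\delta)<\ep$, \flen. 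Next, (v)$\Rightarrow$(ii): since $a_n\,dQ_n/dP_n=1/W_n$ under $Q_n$ (finiteness being part of hypothesis (v)), (v) states that the $Q_n$-laws of $(1/W_n)$ are relatively compact, hence tight by Prokhorov's theorem, so for each $\ep>0$ there is an $M$ with $Q_n(1/W_n>M)<\ep$; take $\delta=1/M$.

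The substantive step is (iii)$\Rightarrow$(ii). Write $L_n=dQ_n/dP_n$. Since $P_nL_n\ge P_n(L_n;L_n>b\,a_n^{-1})\ge b\,a_n^{-1}P_n(L_n>b\,a_n^{-1})$, and the last quantity tends to $1$ by (iii) while $P_nL_n\le1$ always, we get $P_nL_n\to1$ — equivalently the $Q_n$-mass $Q_n(p_n=0)=1-P_nL_n$ of the part of $Q_n$ singular with respect to $P_n$ vanishes — and also $P_n(L_n;L_n\le b\,a_n^{-1})\to0$. Now fix $0<\delta<1/b$ and put $t=(\delta a_n)^{-1}>b\,a_n^{-1}$; from $P_n(L_n;L_n\le t)\ge b\,a_n^{-1}\bigl(P_n(L_n>b\,a_n^{-1})-P_n(L_n>t)\bigr)$, (iii), and the Markov bound $P_n(L_n>t)\le\delta a_n$ one obtains $\liminf_nP_n(L_n;L_n\le t)\ge1-b\delta$, hence $\limsup_n\int_{\{L_n>t\}}L_n\,dP_n\le b\delta$. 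Converting back through $Q_n(W_n<\delta)=\int_{\{L_n>t\}}L_n\,dP_n+Q_n(p_n=0)$ yields $\limsup_nQ_n(W_n<\delta)\le b\delta$, and choosing $\delta<\min(1/b,\ep/b)$ gives (ii). Since (iv)$\Rightarrow$ remote contiguity and (ii)$\Rightarrow$(iv), while (iii)$\Rightarrow$(ii) and (v)$\Rightarrow$(ii), each of (i)--(v) implies $Q_n\contig a_n^{-1}P_n$.

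I expect (iii)$\Rightarrow$(ii) to be the main obstacle: the hypothesis is genuinely two-sided (a $\liminf$ that is automatically at most $1$, hence an honest limit), and the argument must track the Lebesgue decomposition of $Q_n$ with respect to $P_n$ and the passage between $P_n$- and $Q_n$-laws of the (reciprocal) likelihood ratio without losing the constant $b$. The remaining pieces — (i) and (iv) directly, (v) via Prokhorov, and (ii)$\Rightarrow$(iv) — are routine once the bookkeeping with $\mu_n,p_n,q_n,L_n,W_n$ is in place.
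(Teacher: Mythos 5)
Your proof is correct, and for (i), (iv), the reduction (ii)$\Rightarrow$(iv) via the computation
\[
\bigl\|Q_n-Q_n\wedge c\,a_n^{-1}P_n\bigr\|
  \leq \int 1\{q_n>c\,a_n^{-1}p_n\}(q_n-c\,a_n^{-1}p_n)\,d\mu_n
  \leq Q_n\bigl(dP_n/dQ_n<c^{-1}a_n\bigr),
\]
and the Prokhorov argument for (v), your route coincides with the paper's. The one place you genuinely diverge is (iii). The paper does \emph{not} pass through (ii); it observes that the same display gives the exact identity
\[
\bigl\|Q_n-Q_n\wedge b\,a_n^{-1}P_n\bigr\|
  \leq Q_n\bigl(q_n>b\,a_n^{-1}p_n\bigr) - b\,a_n^{-1}P_n\bigl(q_n>b\,a_n^{-1}p_n\bigr),
\]
sandwiches $0\leq b\,a_n^{-1}P_n(q_n>b\,a_n^{-1}p_n)\leq Q_n(q_n>b\,a_n^{-1}p_n)\leq 1$, and reads off that the right-hand side has $\limsup$ zero under (iii). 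That one line yields (iv) directly, with $c=b$ working for every $\ep$. Your (iii)$\Rightarrow$(ii) is correct but does much more: the preliminary step $P_nL_n\to1$, the auxiliary cut $t=(\delta a_n)^{-1}$, the Markov bound $P_n(L_n>t)\leq\delta a_n$, and the explicit accounting of the singular mass $Q_n(p_n=0)$ are all absorbed automatically by that sandwich. What your route buys is a clearer view of the Lebesgue decomposition and of how the constant $b$ degrades into the $\delta$ of (ii); what the paper's route buys is brevity. On (v), you only establish the forward implication (v)$\Rightarrow$(ii) whereas the paper records the equivalence, but only sufficiency is needed for the lemma, and your use of Prokhorov (reading the clause that $a_n\,dQ_n/dP_n$ `is a sequence of random variables' under $Q_n$ as giving $Q_n$-a.s.\ finiteness, hence $Q_n\ll P_n$ and $1/W_n=a_n\,dQ_n/dP_n$) is the intended one.
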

\begin{proof}
The proof of this lemma can be found in appendix~\ref{sec:proofs}. It
actually proves that
({\it (i)} or {\it (iv)}) implies remote contiguity;
that ({\it (ii)} or {\it (iii)}) implies {\it (iv)} 
and that {\it (v)} is equivalent to {\it (ii)}.
\end{proof}
Contiguity and its remote variation are compared in the context of
(parametric and non-parametric) regression in
examples~\ref{ex:regression} and~\ref{ex:rcLAN}.
We may specify the definition of remote contiguity slightly further.
\begin{definition}
\label{def:specremctg}
Given measurable spaces $(\scrX_n,\scrB_n)$, $(n\geq1)$ with two
sequences $(P_n)$ and $(Q_n)$ of probability measures and sequences
$\rho_n,\sigma_n>0$, $\rho_n,\sigma_n\rightarrow0$,
we say that
$Q_n$ is $\rho_n$-to-$\sigma_n$ remotely contiguous with respect to $P_n$,
notation $\sigma_n^{-1}Q_n\contig \rho_n^{-1}P_n$,
if,
\[
  P_n\phi_n(\samplen) = o(\rho_n)
  \quad\Rightarrow\quad Q_n\phi_n(\samplen)=o(\sigma_n),
\]
for every sequence of $\scrB_n$-measurable $\phi_n:\scrX_n\rightarrow[0,1]$.
\end{definition}
Like definition~\ref{def:remctg}, definition~\ref{def:specremctg} allows
for reformulation similar to lemma~\ref{lem:rcfirstlemma}, \eg\ if for
some sequences $\rho_n,\sigma_n$ like in definition~\ref{def:specremctg},
\[
  \bigl\| Q_n-Q_n\wedge \sigma_n\,\rho_n^{-1}P_n \bigr\| = o(\sigma_n),
\]
then $\sigma_n^{-1}Q_n\contig \rho_n^{-1}P_n$. We leave the formulation
of other sufficient conditions to the reader. Note that inequality
(\ref{eq:rcexp}) in example~\ref{ex:KLclose} implies that
$b_n^{-1} P_0^n\contig a_n^{-1} P^n$, for any $a_n\leq\exp(-n\alpha^2)$ with
$\alpha^2>\ft12\ep^2$ and $b_n=\exp(-n(\alpha^2-\ft12\ep^2))$.
It is noted that this implies that $\phi_n(\samplen)\convas{Q_n}0$ for
any $\phi_n:\scrX_n\to[0,1]$ such that $P_n\phi_n(\samplen)=o(\rho_n)$ (more
generally, this holds whenever $\sum_n\sigma_n<\infty$, as a consequence
of the first Borel-Cantelli lemma).

\subsection{Remote contiguity for Bayesian limits}
\label{sub:rcbayes}

The relevant applications in the context of Bayesian limit theorems
concern remote contiguity of the sequence of true distributions
$P_{\tht_0,n}$ with respect to local prior predictive distributions
$P_n^{\Pi_n|B_n}$, where the sets $B_n\subset\Tht$ are such that,
\begin{equation}
  \label{eq:ctglpp}
  P_{\tht_0,n} \ctg a_n^{-1}P_n^{\Pi_n|B_n},
\end{equation}
for some rate $a_n\downarrow0$.

In the case of \iid\ data, Barron \cite{Barron88} introduces strong
and weak notions of \emph{merging} of $P_{\tht_0,n}$ with
(non-local) prior predictive distributions $P_n^{\Pi}$.
The weak version imposes condition~{\it (ii)} of lemma~\ref{lem:rcfirstlemma}
for all exponential rates simultaneously. Strong merging (or \emph{matching}
\cite{Barron86}) coincides with Schwartz's almost-sure limit, while
weak matching is viewed as a limit in probability. 

By contrast, if we have a specific rate $a_n$ in mind, the relevant
mode of convergence is Prohorov's weak convergence: according to
lemma~\ref{lem:rcfirstlemma}-{\it (v)},
(\ref{eq:ctglpp}) holds if inverse likelihood ratios $Z_n$ have
a weak limit $Z$ when re-scaled by $a_n$,
\[
  Z_n=(dP_n^{\Pi_n|B_n}/dP_{\tht_0,n})^{-1}(X^n),\quad
  a_n\,Z_n\convweak{P_{\tht_0,n}}Z.
\]
To better understand the counterexamples of section~\ref{sec:app},
notice the high sensitivity of this criterion to the existence of
subsets of the sample spaces assigned probability zero under some
model distributions, while the true probability is non-zero.
More generally, remote contiguity is sensitive to subsets $E_n$
assigned fast decreasing probabilities under local prior predictive
distributions $P_n^{\Pi_n|B_n}(E_n)$, while 
the probabilities $P_{\tht_0,n}(E_n)$ remain high, which is what
definition~\ref{def:remctg} expresses. The rate $a_n\downarrow0$
helps to control the likelihood ratio (compare to the unscaled limits
of likelihood ratios that play a central role in the theory of convergence
of experiments \cite{LeCam86}), conceivably enough to force
uniform tightness in many non-parametric situations.

But condition (\ref{eq:ctglpp}) can also be written out, for example to the
requirement that for some constant $\delta>0$,
\[
  P_{\tht_0,n}\Bigl( \,\int
    \frac{dP_{\tht,n}}{dP_{\tht_0,n}}(\samplen)\,d\Pi_n(\tht|B_n)
    < \delta\,a_n\Bigr)\to0,
\]
with the help of lemma~\ref{lem:rcfirstlemma}-{\it(ii)}.
\begin{example}
\label{ex:domains}
Consider again the model of example~\ref{ex:noKLpriors}. 
In example~\ref{ex:rcnoKLpriors}, it is shown that if the prior $\Pi$
for $\tht\in\RR$ has a continuous and strictly positive Lebesgue density
and we choose $B_n=[\tht_0,\tht_0+1/n]$, then for every $\delta>0$ and
all $a_n\downarrow0$,
\[
  P_{\tht_0}^n\biggl( \int
    \frac{dP_{\tht,n}}{dP_{\tht_0,n}}(\samplen)\,d\Pi(\tht|B_n)
    < \delta\,a_n \biggr)
  \leq
  P_{\tht_0}^n\bigl( \,n(X_{(1)}-\tht_0) < 2 \delta\,a_n \,\bigr),
\]
for large enough $n\geq1$, and the \rhs\ goes to zero for any $a_n$
because the random variables $n(X_{(1)}-\tht_0)$ have a non-degenerate,
positive weak limit under $P_{\tht_0}^n$ as $n\to\infty$. Conclude
that with these choices for $\Pi$ and $B_n$, (\ref{eq:ctglpp})
holds, for any $a_n$. \closebox
\end{example}

The following proposition should be viewed in light of
\cite{LeCam88}, which considers properties like contiguity,
convergence of experiments and local asymptotic normality in situations
of statistical information loss. In this case, we are interested in
(remote) contiguity of the probability measures that arise as marginals
for the data $\samplen$ when information concerning the (Bayesian random)
parameter $\tht$ is unavailable.
\begin{proposition}
\label{prop:utfamily}
Let $\tht_0\in\Tht$ and a prior $\Pi:\scrG\to[0,1]$ be given.
Let $B$ be a measurable subset of $\Tht$ such that $\Pi(B)>0$.
Assume that for some $a_n\downarrow0$, the family,
\[
  \biggl\{ a_n\Bigl(\frac{dP_{\tht,n}}{dP_{\tht_0,n}}\Bigr)^{-1}(\samplen):
    \tht\in B, n\geq1 \biggr\},
\]
is uniformly tight under $P_{\tht_0,n}$. Then
$P_{\tht_0,n}\ctg a_n^{-1} P_{n}^{\Pi|B}$.
\end{proposition}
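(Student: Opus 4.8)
The plan is to verify the defining implication of remote contiguity by hand: for an arbitrary sequence of $\scrB_n$-measurable $\phi_n:\scrX_n\to[0,1]$ with $P_n^{\Pi|B}\phi_n=o(a_n)$, I must deduce $P_{\tht_0,n}\phi_n=o(1)$. The bridge between the two expectations is the pointwise likelihood ratio $Z_{\tht,n}=dP_{\tht_0,n}/dP_{\tht,n}$ (evaluated at $\samplen$), read off from the Lebesgue decomposition of $P_{\tht_0,n}$ with respect to $P_{\tht,n}$ and set equal to $+\infty$ on a set carrying the singular part. For every $\psi:\scrX_n\to[0,1]$ and every $t>0$ it satisfies
\[
  P_{\tht_0,n}\bigl(\psi\,1_{\{Z_{\tht,n}\leq t\}}\bigr)
   = \int \psi\,1_{\{Z_{\tht,n}\leq t\}}\,Z_{\tht,n}\,dP_{\tht,n}
   \leq t\,P_{\tht,n}\psi ,
\]
the first step being an equality because the truncation event $\{Z_{\tht,n}\leq t\}$ carries no singular mass.

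First I would turn the uniform tightness hypothesis into one uniform tail bound. Since the $a_nZ_{\tht,n}$ are nonnegative, uniform tightness under $P_{\tht_0,n}$ says exactly that for each $\ep>0$ there is a finite $M=M_\ep$ with $P_{\tht_0,n}(a_nZ_{\tht,n}>M)<\ep$ for \emph{all} $\tht\in B$ and \emph{all} $n\geq1$ at once (incidentally forcing $P_{\tht_0,n}(Z_{\tht,n}=\infty)=0$, though this is not needed). Splitting $\phi_n$ on the event $\{a_nZ_{\tht,n}\leq M\}$ and applying the estimate above with $t=Ma_n^{-1}$ then gives, for every $\tht\in B$ and every $n\geq1$,
\[
  P_{\tht_0,n}\phi_n
   \;\leq\; P_{\tht_0,n}\bigl(a_nZ_{\tht,n}>M\bigr) + Ma_n^{-1}P_{\tht,n}\phi_n
   \;\leq\; \ep + Ma_n^{-1}P_{\tht,n}\phi_n .
\]

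The left-hand side does not depend on $\tht$, so I would integrate this inequality over $\tht\in B$ against $d\Pi(\tht|B)=\Pi(B)^{-1}d\Pi(\tht)$; measurability of $\tht\mapsto P_{\tht,n}\phi_n$ and the identity $\int_B P_{\tht,n}\phi_n\,d\Pi(\tht|B)=P_n^{\Pi|B}\phi_n$ are part of the standing conventions (remarks~\ref{rem:conv} and~\ref{rem:conv2}, definition~\ref{def:posterior}). The result is $P_{\tht_0,n}\phi_n\leq\ep+Ma_n^{-1}P_n^{\Pi|B}\phi_n$ for every $n\geq1$; since $P_n^{\Pi|B}\phi_n=o(a_n)$, letting $n\to\infty$ leaves $\limsup_nP_{\tht_0,n}\phi_n\leq\ep$, and as $\ep>0$ is arbitrary, $P_{\tht_0,n}\phi_n\to0$, which is the assertion. (One could instead phrase the same estimate as a verification of condition~{\it(ii)} of lemma~\ref{lem:rcfirstlemma}, but the direct route is shorter.)

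I do not anticipate a real obstacle; the only point needing care is the measure-theoretic bookkeeping when $P_{\tht_0,n}$ is not dominated by $P_{\tht,n}$ — adopting the convention $Z_{\tht,n}=+\infty$ on the singular support, checking that the truncation $\{Z_{\tht,n}\leq t\}$ then carries no singular mass so that the change-of-measure identity is exact there, and noting that uniform tightness rules out singular mass in any case. Everything else is a one-line truncation estimate followed by Fubini.
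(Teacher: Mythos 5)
Your proof is correct, and it takes a more streamlined route than the paper's. The paper first observes that uniform tightness gives condition~{\it (ii)} of lemma~\ref{lem:rcfirstlemma} uniformly over $\tht\in B$, converts that to the total-variation statement of condition~{\it (iv)} uniformly in $\tht$, uses concavity of $\wedge$ (Jensen) plus reverse-Fatou to carry the bound $\|P_{\tht_0,n}-P_{\tht_0,n}\wedge c\,a_n^{-1}P_{\tht,n}\|<\ep$ over to the mixture $P_n^{\Pi|B}$, and then invokes the lemma once more to conclude. You instead verify definition~\ref{def:remctg} directly by a single truncation estimate, exploiting the fact that $P_{\tht_0,n}\phi_n$ does not depend on $\tht$ so that integrating the $\tht$-wise inequality against $\Pi(\cdot\,|B)$ is free. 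Both proofs turn on the same underlying truncation idea and are essentially equivalent in content; yours bypasses the TV-norm bookkeeping and the measure-theoretic $\wedge$ entirely and is the shorter and more transparent of the two, whereas the paper's routing through condition~{\it (iv)} makes the convexity step explicit and reuses the machinery of lemma~\ref{lem:rcfirstlemma}. Your handling of the singular part of $P_{\tht_0,n}$ relative to $P_{\tht,n}$ is careful and correct: the truncation event carries no singular mass, so the change-of-measure step is an equality, and uniform tightness independently kills the singular contribution.
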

Other sufficient conditions from lemma~\ref{lem:rcfirstlemma} may replace
the uniform tightness condition. When the prior $\Pi$ and subset $B$ are
$n$-dependent, application of lemma~\ref{lem:rcfirstlemma} requires more.
(See, for instance, example~\ref{ex:rcLAN} and lemma~\ref{lem:rcLAN}, where
local asymptotic normality is used to prove (\ref{eq:ctglpp}).)

To re-establish contact with the notion of merging, note the following.
If remote contiguity of the type (\ref{eq:ctglpp}) can be achieved
for a sequence of subsets $(B_n)$, then it also holds for any sequence
of sets (\eg\ all equal to $\Tht$, in Barron's case) that contain the
$B_n$ but at a rate that differs proportionally to the fraction of prior
masses.
\begin{lemma}
\label{lem:rcsubset}
For all $n\geq1$, let $B_n\subset\Theta$ be such that $\Pi_n(B_n)>0$
and $C_n$ such that $B_n\subset C_n$ with
$c_n=\Pi_n(B_n)/\Pi_n(C_n)\downarrow0$,
then,
\[
  P_n^{\Pi_n|B_n} \ctg c_n^{-1}\,P_n^{\Pi_n|C_n}.
\]
Also, if for some sequence $(P_n)$, $P_n\ctg a_n^{-1}\,P_n^{\Pi_n|B_n}$
then $P_n\ctg a_n^{-1}c_n^{-1}\, P_n^{\Pi_n|C_n}$.
\end{lemma}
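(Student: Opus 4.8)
The plan is to reduce everything to the defining implication~(\ref{eq:defrc}) of remote contiguity by a direct comparison of the measures $P_n^{\Pi_n|B_n}$, $P_n^{\Pi_n|C_n}$ and $P_n^{\Pi_n|C_n\setminus B_n}$. The starting observation is the mixture identity: since $B_n\subset C_n$, conditioning $\Pi_n$ on $C_n$ and then splitting according to whether $\tht\in B_n$ gives
\[
  P_n^{\Pi_n|C_n} = c_n\,P_n^{\Pi_n|B_n} + (1-c_n)\,P_n^{\Pi_n|C_n\setminus B_n},
\]
where $c_n=\Pi_n(B_n)/\Pi_n(C_n)$ (and if $\Pi_n(C_n\setminus B_n)=0$ the second term is absent and the statement is trivial). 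From this identity one reads off immediately that for any $\scrB_n$-measurable $\phi_n:\scrX_n\to[0,1]$,
\[
  P_n^{\Pi_n|C_n}\phi_n \;\geq\; c_n\,P_n^{\Pi_n|B_n}\phi_n \;\geq\; 0,
\]
because the dropped term is non-negative. Hence if $P_n^{\Pi_n|C_n}\phi_n=o(c_n)$ then $P_n^{\Pi_n|B_n}\phi_n = o(1)$, which is exactly the defining property~(\ref{eq:defrc}) for $P_n^{\Pi_n|B_n}\ctg c_n^{-1}P_n^{\Pi_n|C_n}$. This establishes the first assertion.

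For the second assertion, suppose $P_n\ctg a_n^{-1}P_n^{\Pi_n|B_n}$ and let $\phi_n$ satisfy $P_n^{\Pi_n|C_n}\phi_n=o(a_nc_n)$. By the inequality just derived, $c_n\,P_n^{\Pi_n|B_n}\phi_n\leq P_n^{\Pi_n|C_n}\phi_n=o(a_nc_n)$, so, dividing by $c_n>0$, $P_n^{\Pi_n|B_n}\phi_n=o(a_n)$. Now the hypothesis $P_n\ctg a_n^{-1}P_n^{\Pi_n|B_n}$ applied through~(\ref{eq:defrc}) yields $P_n\phi_n=o(1)$, which is precisely the defining property of $P_n\ctg a_n^{-1}c_n^{-1}P_n^{\Pi_n|C_n}$. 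Alternatively one could note that remote contiguity composes — from $P_n^{\Pi_n|B_n}\ctg c_n^{-1}P_n^{\Pi_n|C_n}$ and $P_n\ctg a_n^{-1}P_n^{\Pi_n|B_n}$ one deduces $P_n\ctg (a_nc_n)^{-1}P_n^{\Pi_n|C_n}$ — but spelling out the two-line argument above is cleaner and self-contained.

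There is essentially no obstacle here; the only point that needs a moment's care is the degenerate bookkeeping when $\Pi_n(B_n)=\Pi_n(C_n)$ (so $c_n=1$, not $\downarrow0$) or when $B_n$ has full $\Pi_n|C_n$-mass, and the assumption $c_n\downarrow0$ in the statement rules the former out while the latter makes the claim vacuously true. One should also verify that the conditional predictive measures $P_n^{\Pi_n|B_n}$ etc.\ are well-defined, i.e.\ that $\Pi_n(B_n)>0$ and $\Pi_n(C_n)>0$, which is part of the hypotheses. The rest is the mixture identity and the monotonicity of $\phi_n\mapsto P_n^{\Pi_n|C_n}\phi_n$ under dropping a non-negative summand.
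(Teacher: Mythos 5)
Your proof is correct and follows essentially the same route as the paper's: both rest on the key inequality $c_n\,P_n^{\Pi_n|B_n}\phi_n \leq P_n^{\Pi_n|C_n}\phi_n$ (which you obtain via the mixture decomposition, while the paper reads it off directly from monotonicity of the integral over $B_n\subset C_n$), and then both apply the defining implication~(\ref{eq:defrc}) of remote contiguity in the same way for each assertion. The difference is purely presentational.
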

So when considering possible choices for the sequence $(B_n)$,
smaller choices lead to slower rates $a_n$, rendering (\ref{eq:defrc})
applicable to more sequences of test functions. This advantage is to
be balanced against later requirements that $\Pi_n(B_n)$ may not
decrease too fast.

\section{Posterior concentration}
\label{sec:four}

In this section new frequentist theorems are formulated involving
the convergence of posterior distributions. First we give a basic
proof for posterior consistency assuming existence of
suitable test sequences and remote contiguity of true distributions
$(P_{\tht_0,n})$ with respect to local prior
predictive distributions. Then it is not difficult to extend the proof
to the case of posterior rates of convergence in metric topologies.
With the same methodology it is possible to address questions
in Bayesian hypothesis testing and model selection: if a Bayesian
test to distinguish between two hypotheses exists and remote
contiguity applies, frequentist consistency of the Bayes Factor can be
guaranteed.
We conclude with a theorem that uses remote contiguity to describe a
general relation that exists between credible sets and confidence sets,
provided the prior induces remotely-contiguous local prior predictive
distributions.

\subsection{Consistent posteriors}
\label{sub:cons}

First, we consider
posterior consistency generalising Schwartz's theorem to sequentially
observed (non-\iid) data, non-dominated models and priors or
parameter spaces that may depend on the sample size. For an
early but very complete overview
of literature and developments in posterior consistency, see
\cite{Ghosal99}.
\begin{definition}
\label{def:consistency}
The posteriors $\Pi(\,\cdot\,|\samplen)$
are \emph{consistent at $\tht\in\Tht$} if for every neighbourhood
$U$ of $\tht$,
\begin{equation}
  \label{eq:cons}
  \Pi(U|\samplen)\conv{P_{\tht,n}}1.
\end{equation}
The posteriors are said to be \emph{consistent} if this holds for
all $\tht\in\Tht$. We say that
the posterior is \emph{almost-surely consistent} if convergence
occurs almost-surely with respect to some coupling for the sequence
$(P_{\tht_0,n})$.
\end{definition}
Equivalently, posterior consistency can be characterized in terms
of posterior expectations of bounded and continuous functions (see
proposition~\ref{prop:prokhorov}).
\begin{theorem}
\label{thm:consistency}
Assume that for all $n\geq1$, the data $\samplen\sim P_{\tht_0,n}$ for
some $\tht_0\in\Tht$. Fix a prior $\Pi:\scrG\to[0,1]$ and assume that
for given $B,V\in\scrG$ with $\Pi(B)>0$ and $a_n\downarrow0$,
\begin{itemize}[noitemsep,nolistsep]
  \item[(i)] there exist Bayesian tests $\phi_n$ for $B$ versus $V$,
    \begin{equation}
      \label{eq:bayesiantestingpower}
      \int_{B} P_{\tht,n}\phi_n\,d\Pi(\tht)
        + \int_{V} P_{\tht',n}(1-\phi_n)\,d\Pi(\tht') = o(a_n),
    \end{equation}
  \item[(ii)] the sequence $P_{\tht_0,n}$ satisfies
    $P_{\tht_0,n} \, \ctg \, a_n^{-1}\, P_n^{\Pi|B}$.
\end{itemize}
Then $\Pi(V|\samplen)\convprob{P_{\tht_0,n}}0$.
\end{theorem}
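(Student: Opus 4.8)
The plan is to use Lemma~\ref{lem:testineq} to bound the posterior mass of $V$ averaged over the local prior predictive distribution $P_n^{\Pi|B}$, and then to apply the remote contiguity assumption~(ii) with $\Pi_n(V|\samplen)$ itself playing the role of the test function. This makes the theorem essentially a one-line consequence of the testing inequality together with the definition of remote contiguity.

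First I would apply Lemma~\ref{lem:testineq} in its $n$-dependent form, with $B$, $V$ and the test $\phi_n$ supplied by assumption~(i). Since $d\Pi(\tht|B)=\Pi(B)^{-1}\mathbf{1}_B\,d\Pi(\tht)$, the mixture $\int_B P_{\tht,n}(\,\cdot\,)\,d\Pi(\tht|B)$ is precisely the local prior predictive distribution $P_n^{\Pi|B}$, so the left-hand side of~(\ref{eq:testineq}) equals $P_n^{\Pi|B}\Pi_n(V|\samplen)$. The two terms on the right-hand side are $\Pi(B)^{-1}\int_B P_{\tht,n}\phi_n(\samplen)\,d\Pi(\tht)$ and $\Pi(B)^{-1}\int_V P_{\tht,n}(1-\phi_n(\samplen))\,d\Pi(\tht)$, each of which is at most $\Pi(B)^{-1}$ times the left-hand side of~(\ref{eq:bayesiantestingpower}); because $\Pi(B)$ is a fixed positive constant, assumption~(i) then gives $P_n^{\Pi|B}\Pi_n(V|\samplen)=o(a_n)$. (This step is exactly the special case $B_n\equiv B$, $V_n\equiv V$, $b_n=\Pi(B)$ of Proposition~\ref{prop:prototype}.)

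Next I would invoke assumption~(ii). By Definition~\ref{def:remctg}, $P_{\tht_0,n}\ctg a_n^{-1}P_n^{\Pi|B}$ means that every sequence of $\scrB_n$-measurable maps $\psi_n:\scrX_n\to[0,1]$ with $P_n^{\Pi|B}\psi_n=o(a_n)$ satisfies $P_{\tht_0,n}\psi_n=o(1)$. Taking $\psi_n=\Pi_n(V|\samplen)$, which is $\scrB_n$-measurable and $[0,1]$-valued, the bound from the previous step yields $P_{\tht_0,n}\Pi_n(V|\samplen)\to0$. Finally, since $\Pi_n(V|\samplen)$ is a non-negative random variable bounded by $1$ whose $P_{\tht_0,n}$-expectation tends to zero, Markov's inequality gives $\Pi_n(V|\samplen)\convprob{P_{\tht_0,n}}0$, which is the assertion.

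There is no genuine analytic obstacle here; the only points demanding care are the bookkeeping with the fixed constant $\Pi(B)^{-1}$ (so that $o(a_n)$ is preserved) and the correct orientation of the remote-contiguity implication, namely that the true law $P_{\tht_0,n}$ appears in the role of $Q_n$ and the local prior predictive $P_n^{\Pi|B}$ in the role of $P_n$ in Definition~\ref{def:remctg}.
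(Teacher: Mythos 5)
Your proof is correct and takes essentially the same route as the paper: the author invokes Proposition~\ref{prop:prototype} (with $B_n\equiv B$, $V_n\equiv V$, $b_n=\Pi(B)$) to get $P_n^{\Pi|B}\Pi_n(V|\samplen)=o(a_n)$, exactly your application of Lemma~\ref{lem:testineq}, and then applies remote contiguity with $\Pi_n(V|\samplen)$ as the test function. The bookkeeping with $\Pi(B)^{-1}$ and the orientation of the contiguity implication are handled correctly.
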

These conditions are to be interpreted as follows:
theorem~\ref{thm:testconsequi} lends condition~{\it(i)} a distinctly
Bayesian interpretation: it requires a Bayesian test to set $V$ apart
from $B$ with testing power $a_n$. Lemma~\ref{lem:testineq} translates
this into the (still Bayesian) statement that the posteriors for $V$
go to zero in $P_n^{\Pi|B}$-expectation. Condition~{\it(ii)} is there
to promote this Bayesian point to a frequentist one
through (\ref{eq:defrc}). To present this from another perspective:
condition~{\it(ii)} ensures that the $P_n^{\Pi|B}$ cannot be
tested versus $P_{\tht_0,n}$ at power $a_n$, so the posterior for $V$
go to zero in $P_{\tht_0,n}$-expectation as well (otherwise a sequence
$\phi_n(\samplen)\propto \Pi(V|\samplen)$ would
constitute such a test).

To illustrate theorem~\ref{thm:consistency} and its conditions
Freedman's counterexamples are considered in detail in
example~\ref{ex:contfreedman65}.

A proof of a theorem very close to Schwartz's theorem is now possible.
Consider condition {\it (i)} of theorem~\ref{thm:schwartz}: a well-known
argument based on Hoeffding's inequality guarantees the existence of
a uniform test sequence \emph{of exponential power} whenever a
uniform test sequence test sequence exists, so Schwartz
equivalently assumes that there exists a $D>0$ such that,
\[
  P_0^n\phi_n +
  \sup_{ Q\in \scrP\setminus U} Q^n(1-\phi_n) =o(e^{-nD}).
\]
We vary slightly and assume the existence of a Bayesian test sequence
of exponential power. In the following theorem, let $\scrP$ denote a
Hausdorff space of single-observation distributions on $(\scrX,\scrB)$
with Borel prior $\Pi$.
\begin{corollary}
\label{cor:schwartz}
For all $n\geq1$, let $(X_1, X_2, \ldots, X_n)\sim P_0^n$ for some
$P_0\in\scrP$. Let $U$ denote an open neighbourhood of $P_0$ and define
$K(\ep)=\{P\in\scrP:-P_0\log(dP/dP_0)<\ep^2\}$. If,
\begin{itemize}[noitemsep,nolistsep]
\item[(i)] there exist $\ep>0$, $D>0$ and a sequence of measurable
  $\psi_n:\scrX^n\to[0,1]$, such that,  
  \[
    \int_{K(\ep)} P^n\psi_n\,d\Pi(P)
    + \int_{\scrP\setminus U} Q^n(1-\psi_n)\,d\Pi(Q)
      =o(e^{-nD}),
  \]
\item[(ii)] and $\Pi(K(\ep))>0$ for all $\ep>0$,
\end{itemize}
then $\Pi(U|\samplen)\convas{P_0}1$.
\end{corollary}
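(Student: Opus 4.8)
The plan is to derive Corollary~\ref{cor:schwartz} from Theorem~\ref{thm:consistency} by choosing the model subsets $B$ and $V$ appropriately and verifying the two hypotheses of that theorem with the exponential rate $a_n = e^{-nD}$. Concretely, I would set $V = \scrP\setminus U$ and $B = K(\ep)$, where $\ep$ is the constant supplied by hypothesis~{\it(i)}. Hypothesis~{\it(ii)} of the corollary says $\Pi(K(\ep))>0$, so $\Pi(B)>0$ as required, and the displayed inequality in hypothesis~{\it(i)} of the corollary is verbatim condition~{\it(i)} of Theorem~\ref{thm:consistency} with $a_n=e^{-nD}$ and $\phi_n=\psi_n$ (recall that in this \iid\ setting $P_{\tht,n}$ is $P^n$). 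So the only real work is to check condition~{\it(ii)} of Theorem~\ref{thm:consistency}, namely $P_0^n \ctg (e^{-nD})^{-1} P_n^{\Pi\mid K(\ep)}$, and then to upgrade the in-probability conclusion to the almost-sure one.

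For the remote contiguity step, the key is Example~\ref{ex:KLclose}: it shows that for every $P\in K(\ep')$ with $\ep'^2 = \ep^2$, for $n$ large enough one has, $P_0^n$-almost surely, $dP^n/dP_0^n \geq e^{-\frac12 n\ep^2}$, hence $P^n\psi_n \geq e^{-\frac12 n\ep^2} P_0^n\psi_n$ for every $[0,1]$-valued $\psi_n$. I would integrate this lower bound over $K(\ep)$ against $d\Pi(P\mid K(\ep))$; since the exceptional $P_0^n$-null set depends on $P$, I would instead argue via condition~{\it(ii)} of Lemma~\ref{lem:rcfirstlemma}, bounding below $\int (dP^n/dP_0^n)\,d\Pi(P\mid K(\ep))$ by restricting the integral to the set where the law-of-large-numbers bound already holds and using Fubini and dominated convergence to see that this restricted integral is, with $P_0^n$-probability tending to one, at least (a constant times) $e^{-\frac12 n\ep^2}$. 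Choosing $D$ appropriately — the corollary's statement gives us a free $D>0$, but the argument actually needs $D$ large enough, or rather we should read hypothesis~{\it(i)} as holding for \emph{some} $D$ and then note that $e^{-nD}=o(e^{-\frac12 n\ep^2})$ precisely when $D>\frac12\ep^2$; since a test of power $o(e^{-nD})$ is also a test of power $o(e^{-nD'})$ for any $D'<D$, one may always shrink $D$, and the interesting regime is large $D$, where $e^{-nD}$ is indeed $o(e^{-\frac12\ep^2 n})$ — we get $dP_n^{\Pi\mid K(\ep)}/dP_0^n \geq \delta\, a_n$ on a set of $P_0^n$-probability $\to1$, which is exactly Lemma~\ref{lem:rcfirstlemma}-{\it(ii)}. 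Hence $P_0^n \ctg a_n^{-1} P_n^{\Pi\mid K(\ep)}$ and Theorem~\ref{thm:consistency} yields $\Pi_n(\scrP\setminus U\mid\samplen)\convprob{P_0^n}0$, i.e.\ $\Pi_n(U\mid\samplen)\convprob{P_0^n}1$.

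It remains to strengthen convergence in probability to $P_0$-almost-sure convergence. Here I would use the refinement noted after Definition~\ref{def:specremctg}: when the auxiliary rate sequence is summable, the first Borel–Cantelli lemma promotes the $o(1)$ conclusion in the definition of remote contiguity to an almost-sure statement. Since $a_n=e^{-nD}$ and the relevant $\sigma_n$ in the $\rho_n$-to-$\sigma_n$ version comes out as a geometric sequence $\sum_n\sigma_n<\infty$, the test-function sequence $\psi_n$ (and likewise the sequence $\samplen\mapsto\Pi_n(\scrP\setminus U\mid\samplen)$, which by Lemma~\ref{lem:testineq} is dominated by such a test up to the summable prior-mass factor) converges to $0$ not just in $P_0^n$-probability but $P_0$-almost-surely along the product coupling.

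The main obstacle I anticipate is handling the $P$-dependence of the $P_0^n$-null exceptional sets in the law-of-large-numbers bound of Example~\ref{ex:KLclose} when integrating over $K(\ep)$: one cannot simply say the bound $dP^n/dP_0^n\ge e^{-\frac12 n\ep^2}$ holds simultaneously for all $P\in K(\ep)$ on one good event. The clean way around this is exactly the Fubini/dominated-convergence argument sketched above — bound the marginal likelihood ratio $\int(dP^n/dP_0^n)d\Pi(P\mid K(\ep))$ from below by integrating only over the (random, but asymptotically full-$\Pi$-mass) subset of $K(\ep)$ on which the pointwise LLN bound is already in force — and this is the step that deserves care in the full write-up.
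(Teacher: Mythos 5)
Your overall plan matches the paper's: take $V=\scrP\setminus U$, $B=K(\ep)$, apply Theorem~\ref{thm:consistency} with $a_n=e^{-nD}$, verify remote contiguity via Example~\ref{ex:KLclose} and Fatou, and finish with the Borel--Cantelli refinement to get the almost-sure conclusion. You also correctly flag the only delicate point in the Fatou step (the $P$-dependence of the exceptional null sets).

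There is, however, a genuine gap in how you handle the mismatch between $D$ and $\ep$. You correctly observe that the remote-contiguity argument requires roughly $D>\tfrac12\ep^2$, and you note that the $\ep$ and $D$ supplied by hypothesis~{\it(i)} need not satisfy this. But your proposed remedy — ``read hypothesis~{\it(i)} as holding for some $D$'', observe that one may shrink $D$, and then restrict attention to ``the interesting regime'' of large $D$ — does not close the gap: if the given $D$ is small relative to $\ep^2$, shrinking it only makes things worse, and you cannot enlarge it. The correct move (and the one the paper makes with its terse ``Choose $\ep$ such that $\ep^2<D$'') is to \emph{shrink $\ep$}, not $D$: pick $\ep'$ with $\ep'^2<D$. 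Then $K(\ep')\subset K(\ep)$, so the left-hand side of the Bayesian test condition only decreases when $K(\ep)$ is replaced by $K(\ep')$, hence~{\it(i)} persists for $B=K(\ep')$; meanwhile~{\it(ii)} gives $\Pi(K(\ep'))>0$, and the Kullback--Leibler lower bound from Example~\ref{ex:KLclose} now yields $\liminf_n e^{nD}(dP_n^{\Pi|K(\ep')}/dP_0^n)(\samplen)\geq1$ $P_0^\infty$-a.s., which is Lemma~\ref{lem:rcfirstlemma}-{\it(ii)} for $a_n=e^{-nD}$. Without this substitution your argument only establishes the corollary under the extra hypothesis $D>\tfrac12\ep^2$. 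One further small omission: you should note (as the paper does in its first line) that condition~{\it(ii)} implies $P_0$ lies in the KL-support of $\Pi$, so $P_0^n\ll P_n^{\Pi}$, ensuring the posterior is $P_0^n$-a.s.\ well-defined per remark~\ref{rem:conv2}.
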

An instance of the application of corollary~\ref{cor:schwartz} is given
in example~\ref{ex:schwartz}.
Example~\ref{ex:finiteX} demonstrates posterior consistency in total
variation for \iid\ data from a finite sample space, for priors of
full support. Extending this, example~\ref{ex:tailfree} concerns
consistency of posteriors for priors that have Freedman's tailfreeness
property \cite{Freedman65}, like the Dirichlet process prior.
Also interesting in this respect is the Neyman-Scott paradox, a
classic example of inconsistency for the ML estimator, discussed
in Bayesian context in \cite{Bayarri04}: whether the posterior is
(in)consistent depends on the prior. The Jeffreys
prior follows the ML estimate while the reference prior avoids
the Neyman-Scott inconsistency. Another question in a sequence
model arises when we analyse FDR-like posterior
consistency for a sequence vector that is assumed to be sparse
(see example~\ref{ex:sparsenormalmeans}).

\subsection{Rates of posterior concentration}
\label{sub:rate}

A significant extension to the theory on posterior convergence
is formed by results concerning posterior convergence in metric spaces
\emph{at a rate}.
Minimax rates of convergence for (estimators based on) posterior
distributions were considered more or less simultaneously in
Ghosal-Ghosh-van~der~Vaart \cite{Ghosal00} and
Shen-Wasserman \cite{Shen01}. Both propose an extension of
Schwartz's theorem to posterior rates of convergence
\cite{Ghosal00,Shen01} and apply Barron's sieve idea with a
well-known entropy argument \cite{Birge83,Birge84} to a shrinking sequence of
Hellinger neighbourhoods and employs a more specific, rate-related
version of the Kullback-Leibler condition (\ref{eq:KLprior}) for
the prior. Both appear to be inspired by contemporary results
regarding Hellinger rates of convergence for sieve MLE's, as well as
on Barron-Schervish-Wasserman \cite{Barron99}, which concerns
posterior consistency based on
controlled bracketing entropy for a sieve, up to subsets of negligible
prior mass, following ideas that were first laid down in
\cite{Barron88}. It is remarked already in \cite{Barron99} that
their main theorem is easily re-formulated as a rate-of-convergence
theorem, with reference to \cite{Shen01}. More recently, Walker,
Lijoi and Pr\"unster \cite{Walker07} have added to these
considerations with a theorem for Hellinger rates of posterior
concentration in models that are separable for the Hellinger metric,
with a central condition that calls for summability of square-roots
of prior masses of covers of the model by Hellinger balls, based
on analogous consistency results in Walker \cite{Walker04}.
More recent is \cite{Kleijn16}, which shows that alternatives
for the priors of \cite{Ghosal00,Shen01} exist.

\begin{theorem}
\label{thm:rates}
Assume that for all $n\geq1$, the data $\samplen\sim P_{\tht_0,n}$ for
some $\tht_0\in\Tht$. Fix priors $\Pi_n:\scrG\to[0,1]$ and assume that
for given $B_n,V_n\in\scrG$ with $\Pi_n(B_n)>0$ and $a_n,b_n\downarrow0$
such that $a_n=o(b_n)$,
\begin{itemize}
\item[(i)] there are Bayesian tests $\phi_n:\scrX_n\to[0,1]$ such that,
\begin{equation}
  \label{eq:bayesiantestingpowerrates}
  \int_{B_n} P_{\tht,n}\phi_n\,d\Pi_n(\tht)
    + \int_{V_n} P_{\tht,n}(1-\phi_n)\,d\Pi_n(\tht) = o(a_n),
\end{equation}
\item[(ii)] The prior mass of $B_n$ is lower-bounded, $\Pi_n(B_n)\geq b_n$,
\item[(iii)] The sequence $P_{\tht_0,n}$ satisfies
$P_{\tht_0,n} \, \ctg \, b_na_n^{-1}\, P_n^{\Pi_n|B_n}$.
\end{itemize}
Then $\Pi(V_n|\samplen)\convprob{P_{\tht_0,n}}0$.\closebox
\end{theorem}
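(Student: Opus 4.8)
The plan is to obtain the conclusion by chaining Proposition~\ref{prop:prototype} with the remote contiguity hypothesis~(iii), in exactly the same way as in the proof of Theorem~\ref{thm:consistency}, the only new feature being that the relevant rate is now the $n$-dependent quantity $b_na_n^{-1}$ rather than a fixed $a_n^{-1}$.

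First I would note that hypotheses~(i) and~(ii), together with the standing assumption $a_n=o(b_n)$, are precisely the assumptions of Proposition~\ref{prop:prototype}: we are given $\Pi_n(B_n)\geq b_n>0$ and a Bayesian test sequence for $B_n$ versus $V_n$ of power $a_n$, namely~(\ref{eq:bayesiantestingpowerrates}). Proposition~\ref{prop:prototype} therefore yields at once
\[
  P_n^{\Pi_n|B_n}\Pi_n(V_n|\samplen) = o(a_nb_n^{-1}).
\]
This is the purely Bayesian half of the argument: averaged over the localised prior predictive law $P_n^{\Pi_n|B_n}$, the posterior mass of $V_n$ tends to zero, and in fact does so at rate $a_nb_n^{-1}$.

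Next I would transport this to a statement under the true laws $P_{\tht_0,n}$. The map $\samplen\mapsto\Pi_n(V_n|\samplen)$ is $\scrB_n$-measurable with values in $[0,1]$ (the posterior being well-defined, see definition~\ref{def:posterior}), hence it is an admissible sequence of test functions in the sense of definition~\ref{def:remctg}. Reading hypothesis~(iii), $P_{\tht_0,n}\ctg b_na_n^{-1}P_n^{\Pi_n|B_n}$, through~(\ref{eq:defrc}) with $\rho_n=a_nb_n^{-1}$ (so that $\rho_n^{-1}=b_na_n^{-1}$), the implication states: whenever $P_n^{\Pi_n|B_n}\psi_n=o(a_nb_n^{-1})$ one has $P_{\tht_0,n}\psi_n=o(1)$. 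Applying this with $\psi_n=\Pi_n(V_n|\samplen)$ and invoking the display above gives
\[
  P_{\tht_0,n}\Pi_n(V_n|\samplen) = o(1).
\]
Finally, since $0\leq\Pi_n(V_n|\samplen)\leq1$, Markov's inequality converts this convergence of means into convergence in probability: for every $\ep>0$, $P_{\tht_0,n}(\Pi_n(V_n|\samplen)>\ep)\leq\ep^{-1}P_{\tht_0,n}\Pi_n(V_n|\samplen)\to0$, whence $\Pi_n(V_n|\samplen)\convprob{P_{\tht_0,n}}0$, as claimed.

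The argument is short, and I expect the only delicate point to be the rate bookkeeping: one must recognise that the $o(a_nb_n^{-1})$ produced on the right-hand side of~(\ref{eq:prototype}) matches exactly the scaling $b_na_n^{-1}$ demanded in hypothesis~(iii), i.e.\ that $\rho_n^{-1}=b_na_n^{-1}$ is precisely what makes the Bayesian estimate and the remote contiguity implication dovetail. No assumptions on the topology of $\Tht$ or on the structure of the sample spaces are needed beyond those already contained in~(i)--(iii), and the $n$-dependence of the priors and of the sets $B_n,V_n$ plays no role other than in this matching — which is why the rate statement rests on the very same two ingredients (a Bayesian test plus remote contiguity of the localised prior predictive law) as plain posterior consistency in Theorem~\ref{thm:consistency}.
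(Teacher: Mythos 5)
Your proposal is correct and follows essentially the same route as the paper's proof: invoke Proposition~\ref{prop:prototype} to get $P_n^{\Pi_n|B_n}\Pi_n(V_n|\samplen)=o(a_nb_n^{-1})$, then apply remote contiguity (iii) with $\rho_n=a_nb_n^{-1}$ and use boundedness of the posterior mass to convert $L^1$-convergence into convergence in probability. The paper's proof is exactly this chain, stated slightly more tersely.
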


\begin{example}
To apply theorem~\ref{thm:rates}, consider again the situation
of a uniform distribution with an unknown location, as in\
examples~\ref{ex:noKLpriors} and~\ref{ex:domains}. Taking $V_n$
equal to $\{\tht:\tht-\tht_0>\ep_n\}$
$\{\tht:\tht_0-\tht>\ep_n\}$ respectively, with $\ep_n=M_n/n$
for some $M_n\to\infty$, suitable
test sequences are constructed in example~\ref{ex:testnoKLpriors},
and in combination with example~\ref{ex:domains},
lead to the conclusion that with a prior $\Pi$ for $\tht$ that
has a continuous and strictly positive Lebesgue density, the
posterior is consistent at (any $\ep_n$ slower than) rate $1/n$.
\end{example}

\begin{example}
\label{ex:GGV}
Let us briefly review the conditions of \cite{Barron99,Ghosal00,Shen01}
in light of theorem~\ref{thm:rates}: let $\ep_n\downarrow0$ denote the
Hellinger rate of convergence we have in mind, let $M>1$ be some constant
and define,
\[
  \begin{split}
  V_n&=\{P\in\scrP:H(P,P_0)\geq M\ep_n\},\\
  B_n&=\{P\in\scrP:-P_0\log{dP/dP_0}<\ep_n^2,
    \,P_0\log^2{dP/dP_0}<\ep_n^2\}.
  \end{split}
\]
Theorems for posterior convergence at a rate propose a sieve of
submodels satisfying entropy conditions like those of
\cite{Birge83,Birge84,LeCam86} and a negligibility condition for
prior mass outside the sieve \cite{Barron88}, based on the
minimax Hellinger rate of convergence $\ep_n\downarrow0$. Together,
they guarantee the existence of
Bayesian tests for Hellinger balls of radius $\ep_n$ versus
complements of Hellinger balls of radius $M\ep_n$ of power
$\exp(-DM^2\,n\ep_n^2)$ for some $D>0$ (see example~\ref{ex:entropy}).
Note that $B_n$ is contained in the Hellinger ball of radius
$\ep_n$ around $P_0$, so (\ref{eq:bayesiantestingpowerrates})
holds. New in \cite{Ghosal00,Shen01} is the condition for the
priors $\Pi_n$,
\begin{equation}
  \label{eq:GGV}
  \Pi_n(B_n)
  \geq e^{-Cn\ep_n^2},
\end{equation}
for some $C>0$. With the help of lemmas~\ref{lem:ctgGGV}
and~\ref{lem:rcfirstlemma}-{\it(ii)}, we conclude that,
\begin{equation}
  \label{eq:GGVrc}
  P_0^n\ctg e^{cn\ep_n^2} P_n^{\Pi|B_n},
\end{equation}
for any $c>1$. If we choose $M$ such that $DM^2-C>1$,
theorem~\ref{thm:rates}
proves that $\Pi(V_n|\samplen)\convprob{P_0}0$, \ie\ the
posterior is Hellinger consistent at rate $\ep_n$.
\end{example}
Certain (simple, parametric) models do not allow the definition
of priors that satisfy (\ref{eq:GGV}), and alternative less
restrictive choices for the sets $B_n$ are possible under
mild conditions on the model \cite{Kleijn16}.

\subsection{Consistent hypothesis testing with Bayes factors}
\label{sub:fact}

The Neyman-Pearson paradigm notwithstanding, hypothesis testing
and classification concern the same fundamental statistical question,
to find a procedure to choose one subset from a given partition
of the parameter space as the most likely to contain the parameter
value of the distribution that has generated the data observed.
Asymptotically one wonders whether choices following
such a procedure focus on the correct subset with probability
growing to one.

From a somewhat shifted perspective, we argue as follows: no
statistician can be certain of the validity of specifics in his
model choice and therefore always runs the risk of biasing his
analysis from the outset. Non-parametric approaches alleviate
his concern but imply greater uncertainty within the model, leaving
the statistician with the desire to select the correct (sub)model
on the basis of the data before embarking upon the
statistical analysis proper (for a recent overview, see
\cite{Taylor15}). The issue also
makes an appearance in asymptotic context, where over-parametrized
models leave room for inconsistency of estimators, requiring
regularization \cite{Birge97,Birge01,Buhlman11}.

\emph{Model selection} describes all statistical methods that
attempt to determine from the data which model to use. (Take for
example sparse variable selection, where one projects out the
majority of covariates prior to actual estimation, and the
model-selection question is which projection is optimal.)
Methods for model selection range from simple rules-of-thumb,
to cross-validation and penalization of the likelihood function.
Here we propose to conduct the frequentist analysis with the help
of a posterior: when faced with a (dichotomous) model choice,
we let the so-called Bayes factor formulate our
preference. For an analysis of hypothesis testing that compares
Bayesian and frequentist views, see \cite{Bayarri04}. An
objective Bayesian perspective on model selection
is provided in \cite{Wasserman06}.

\begin{definition}
For all $n\geq1$, let the model be parametrized by maps
$\tht\mapsto P_{\tht,n}$ on a parameter space $(\Tht,\scrG)$
with priors $\Pi_n:\scrG\rightarrow[0,1]$. Consider disjoint,
measurable $B,V\subset\Tht$. For given $n\geq1$, we say that the
\emph{Bayes factor for testing $B$ versus $V$},
\[
    F_n = \frac{\Pi(B|\samplen)}{\Pi(V|\samplen)}
    \frac{\Pi_n(V)}{\Pi_n(B)},
\]
is consistent for testing $B$ versus $V$, if for all $\tht\in V$,
$F_n\convprob{P_{\tht,n}}0$ and for all $\tht\in B$,
$F_n^{-1}\convprob{P_{\tht,n}}0$.
\end{definition}
Let us first consider this from a purely Bayesian perspective: for
fixed prior $\Pi$ and $\iid$ data, theorem~\ref{thm:testconsequi}
says that the posterior gives rise to consistent Bayes factors for
$B$ versus $V$ in a Bayesian (that is, $\Pi$-almost-sure) way, iff
a Bayesian test sequence for $B$ versus $V$ exists. If the parameter
space $\Tht$ is Polish and the maps $\tht\mapsto P_\tht(A)$ are
Borel measurable for all $A\in\scrB$, proposition~\ref{prop:msbtest}
says that any Borel set $V$ is Bayesian testable versus $\Tht\setminus V$,
so in Polish models for \iid\ data, model selection with Bayes factors is
$\Pi$-almost-surely consistent for all Borel measurable $V\subset\Tht$.

The frequentist requires strictly more, however, so we employ remote
contiguity again to bridge the gap with the Bayesian formulation.
\begin{theorem}
\label{thm:bayesfactor}
For all $n\geq1$, let the model be parametrized by maps
$\tht\mapsto P_{\tht,n}$ on a parameter space with $(\Tht,\scrG)$
with priors $\Pi_n:\scrG\rightarrow[0,1]$. Consider disjoint,
measurable $B,V\subset\Tht$ with $\Pi_n(B),\Pi_n(V)>0$ such that,
\begin{itemize}[noitemsep,nolistsep]
  \item[(i)] There exist Bayesian tests
    for $B$ versus $V$ of power $a_n\downarrow0$,
    \[
      \int_{B} P^n\phi_n\,d\Pi_n(P)
        + \int_{V} Q^n(1-\phi_n)\,d\Pi_n(Q) = o(a_n),
    \]
  \item[(ii)] For every $\tht\in B$, $P_{\tht,n} \ctg a_n^{-1}P_n^{\Pi_n|B}$,
    and for every $\tht\in V$, $P_{\tht,n} \ctg a_n^{-1}P_n^{\Pi_n|V}$.
\end{itemize}
Then the Bayes factor for $B$ versus $V$ is consistent.
\end{theorem}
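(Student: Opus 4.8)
The plan is to identify the Bayes factor with a likelihood ratio of local prior predictive distributions and then run a remote-contiguity-plus-testing argument parallel to that of theorem~\ref{thm:consistency}. Abbreviate $P_n^B:=P_n^{\Pi_n|B}$ and $P_n^V:=P_n^{\Pi_n|V}$. A short computation with densities (say with respect to $\mu_n:=P_n^B+P_n^V$) shows that, on the event $\{\Pi_n(V|\samplen)>0\}$,
\[
  F_n=\frac{\Pi_n(B|\samplen)}{\Pi_n(V|\samplen)}\,\frac{\Pi_n(V)}{\Pi_n(B)}
     =\frac{dP_n^B}{dP_n^V}(\samplen),
\]
since the common marginal density $dP_n^{\Pi_n}/d\mu_n$ and the prior masses cancel. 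Because condition~{\it(i)} is symmetric under $(B,\phi_n)\leftrightarrow(V,1-\phi_n)$ and condition~{\it(ii)} under $B\leftrightarrow V$, it is enough to prove $F_n\convprob{P_{\tht,n}}0$ for every $\tht\in V$; the assertion $F_n^{-1}\convprob{P_{\tht,n}}0$ for $\tht\in B$ then follows word for word with the roles of $B$ and $V$ exchanged.

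So fix $\tht\in V$ and $\eta>0$. By condition~{\it(ii)}, $P_{\tht,n}\ctg a_n^{-1}P_n^V$, so definition~\ref{def:remctg} applied to $\psi_n:=1_{\{F_n\ge\eta\}}$ reduces the task to showing $P_n^V(F_n\ge\eta)=o(a_n)$: this gives $P_{\tht,n}(F_n\ge\eta)\to0$, and since $\eta>0$ is arbitrary, $F_n\convprob{P_{\tht,n}}0$ (applying remote contiguity to $1_{\{\Pi_n(V|\samplen)=0\}}$ also shows $F_n$ is well defined with $P_{\tht,n}$-probability tending to $1$, so this mode of convergence is meaningful). To bound $P_n^V(F_n\ge\eta)$, split against the Bayesian test $\phi_n$ of condition~{\it(i)} and use that on $\{F_n\ge\eta\}$ one has $dP_n^V\le\eta^{-1}dP_n^B$:
\[
  P_n^V(F_n\ge\eta)=\int 1_{\{F_n\ge\eta\}}\phi_n\,dP_n^V+\int 1_{\{F_n\ge\eta\}}(1-\phi_n)\,dP_n^V
    \le\frac{1}{\eta}\,P_n^B\phi_n+P_n^V(1-\phi_n).
\]
Rewriting condition~{\it(i)} as $\Pi_n(B)\,P_n^B\phi_n+\Pi_n(V)\,P_n^V(1-\phi_n)=o(a_n)$ shows that both terms on the right are $o(a_n)$ (the factors $\Pi_n(B)^{-1},\Pi_n(V)^{-1}$ do not affect the rate, being positive constants for the fixed sets $B,V$, bounded whenever the prior does not vary with $n$), which closes the argument.

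I expect the only real subtlety to lie in the first step: justifying $F_n=dP_n^B/dP_n^V(\samplen)$ carefully in a possibly non-dominated model, handling the sets where $\Pi_n(V|\samplen)=0$ or where $P_n^B$ and $P_n^V$ are mutually singular (there the ratio may be $0$ or $+\infty$, but such sets are $P_n^V$-null and, by remote contiguity, $P_{\tht,n}$-negligible, hence harmless). It is worth stressing why one cannot simply quote theorem~\ref{thm:consistency}: that would only yield $\Pi_n(V|\samplen)\convprob{P_{\tht,n}}0$ for $\tht\in B$ and $\Pi_n(B|\samplen)\convprob{P_{\tht,n}}0$ for $\tht\in V$, which is weaker than consistency of $F_n$ since it gives no control on the denominators; it is precisely the likelihood-ratio identity that lets one work directly with the event $\{F_n\ge\eta\}$ and recover the full statement.
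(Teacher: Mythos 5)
The paper omits a proof of theorem~\ref{thm:bayesfactor} in appendix~\ref{sec:proofs}, so there is no argument of the author's to compare against; your proposal supplies one, and it is correct. The key move --- recognising that $F_n$ equals the likelihood ratio $dP_n^{\Pi_n|B}/dP_n^{\Pi_n|V}$ of local prior predictive distributions (so that everything outside $B\cup V$ drops out), and then bounding $P_n^{\Pi_n|V}(F_n\ge\eta)$ against the Bayesian test of condition~(i) before invoking remote contiguity --- is the natural extension of the template behind theorems~\ref{thm:consistency} and~\ref{thm:rates}, and your closing observation that one cannot simply quote theorem~\ref{thm:consistency} twice (because the denominator $\Pi_n(V|\samplen)$ is not controlled) is exactly the right one.

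Two points deserve sharpening. First, what you currently relegate to a parenthesis should be stated as a hypothesis: your inequality together with condition~(i) gives only $P_n^{\Pi_n|V}(F_n\ge\eta)=o\bigl(a_n(\eta^{-1}\Pi_n(B)^{-1}+\Pi_n(V)^{-1})\bigr)$, and since the theorem allows the priors $\Pi_n$ to vary with $n$, passing to $o(a_n)$ --- which is what the remote contiguity of condition~(ii) is calibrated to consume --- requires $\inf_n\Pi_n(B)>0$ and $\inf_n\Pi_n(V)>0$ (or a commensurate strengthening of condition~(ii) that absorbs these factors into the rate). The theorem statement is silent on this; it is a genuine loose end on the paper's side, and your proof does not close without it. Second, in deriving $F_n=dP_n^{\Pi_n|B}/dP_n^{\Pi_n|V}$ you speak of a `common marginal density $dP_n^{\Pi_n}/d\mu_n$' with $\mu_n=P_n^{\Pi_n|B}+P_n^{\Pi_n|V}$; but $P_n^{\Pi_n}$ need not be dominated by $\mu_n$ when the prior charges $\Tht\setminus(B\cup V)$. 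The cleaner route is to read off from (\ref{eq:disintegration}) that $\Pi_n(B|\samplen)=\Pi_n(B)\,\bigl(dP_n^{\Pi_n|B}/dP_n^{\Pi_n}\bigr)(\samplen)$, $P_n^{\Pi_n}$-almost surely, and similarly for $V$, and then divide; the identity, and your treatment of the $P_n^{\Pi_n|V}$-null set $\{\Pi_n(V|\samplen)=0\}$, come out exactly as you claim.
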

Note that the second condition of theorem~\ref{thm:bayesfactor} can
be replaced by a local condition: if, for every $\tht\in B$,
there exists a sequence $B_n(\tht)\subset B$ such that
$\Pi_n(B_n(\tht))\geq b_n$ and $P_{\tht,n} \ctg a_n^{-1}b_nP_n^{\Pi_n|B_n}$,
then $P_{\tht,n} \ctg a_n^{-1}P_n^{\Pi_n|B}$ (as a consequence
of lemma~\ref{lem:rcsubset} with $C_n=B$).

In example~\ref{ex:gofmarkov}, we use theorem~\ref{thm:bayesfactor}
to prove the consistency of the Bayes factor for a goodness-of-fit
test for the equilibrium distribution of an stationary ergodic
Markov chain, based on large-length random-walk data, with prior
and posterior defined on the space of Markov transition matrices.


\subsection{Confidence sets from credible sets}
\label{sub:conf}

The Bernstein-von~Mises theorem \cite{LeCam90} asserts that the
posterior for a smooth, finite-dimensional parameter converges
in total variation to a normal distribution centred on an efficient
estimate with the inverse Fisher information as its covariance,
if the prior has full support. The methodological implication is
that Bayesian credible sets derived from such a posterior can be
reinterpreted as asymptotically efficient confidence sets.
This parametric fact begs for the exploration of possible
non-parametric extensions but Freedman discourages us
\cite{Freedman99} with counterexamples (see also \cite{Cox93})
and concludes that: {\it ``The sad lesson for inference is this.
If frequentist coverage probabilities are wanted in an
infinite-dimensional problem, then frequentist coverage probabilities
must be computed.''}

In recent years, much effort has gone into calculations that
address the question whether non-parametric credible sets can play
the role of confidence sets nonetheless. The focus lies on well-controlled
examples in which both model and prior are Gaussian so that the
posterior is conjugate and analyse posterior expectation and variance to
determine whether credible metric balls have asymptotic frequentist
coverage (for examples, see Szab\'o, van~der~Vaart and
van~Zanten \cite{Szabo15} and references therein).
Below, we change the question slightly and do not seek
to justify the use of credible sets as confidence sets; from
the present perspective it appears
more natural to ask in which particular fashion a credible set is to
be transformed in order to guarantee the transform is a
confidence set, at least in the large-sample limit.

In previous subsections, we have applied remote contiguity after the
concentration inequality to control the $P_{\tht_0,n}$-expectation of the
posterior probability for the alternative $V$ through its
$P_n^{\Pi|B_n}$-expectation. In the discussion of the coverage of credible
sets that follows, remote contiguity is applied to control the
$P_{\tht_0,n}$-probability that $\tht_0$ falls outside the prospective
confidence set through its $P_n^{\Pi|B_n}$-probability. The theorem below
then follows from an application of Bayes's rule (\ref{eq:disintegration}).
Credible levels provide the sequence $a_n$.

\begin{definition}
\label{def:cred}
Let $(\Tht,\scrG)$ with prior $\Pi$, denote the sequence of posteriors by
$\Pi(\cdot|\cdot):\scrG\times\scrX_n\to[0,1]$. Let $\scrD$ denote a
collection of measurable subsets of $\Tht$. 
A \emph{sequence of credible sets} $(D_n)$ of
\emph{credible levels} $1-a_n$ (where $0\leq a_n\leq1$, $a_n\downarrow0$)
is a sequence of set-valued maps $D_n:\scrX_n\to\scrD$ such that
$\Pi(\Tht\setminus D_n(x)|x)=o(a_n)$ for $P_n^{\Pi_n}$-almost-all
$x\in\scrX_n$.
\end{definition}
\begin{definition}
\label{def:confset}
For $0\leq a\leq1$, a set-valued map $x\mapsto C(x)$ defined on
$\scrX$ such that, for all $\tht\in\Tht$,
$P_{\tht}(\tht\not\in C(X))\leq a$, is called a
confidence set of level $1-a$. If the levels $1-a_n$ of a sequence
of confidence sets $C_n(X^n)$ go to $1$ as $n\to\infty$, 
the $C_n(\samplen)$ are said to be asymptotically consistent.
\end{definition}
\begin{definition}
\label{def:confcred}
Let $D$ be a (credible) set in $\Tht$ and let
$B=\{B(\tht):\tht\in\Tht\}$ denote a collection of model
subsets such that $\tht\in B(\tht)$ for all $\tht\in\Tht$.
A model subset $C'$ is said to be (a confidence set)
associated with $D$ under $B$, if for all $\tht\in\Tht\setminus C'$,
$B(\tht)\cap D=\emptyset$.
The intersection $C$ of all $C'$ like above equals
$\{\tht\in\Tht:B(\tht)\cap D\neq\emptyset\}$ and is
called the minimal (confidence) set associated with $D$ under $B$
(see Fig~\ref{fig:assoc}).
\end{definition}
\begin{figure}[bht]
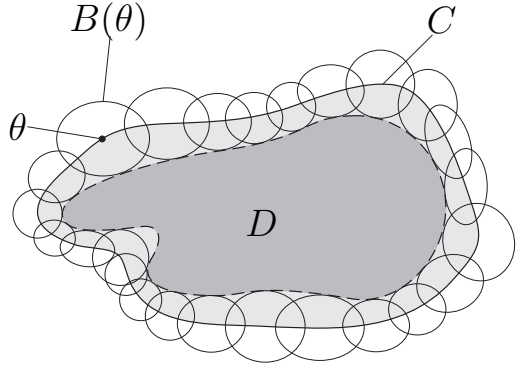

  \begin{center}
    \parbox{8.7cm}{
      \begin{lpic}{assoc-2(0.3)}
        \lbl[t]{43,133;{\Large $\tht$}}
        \lbl[t]{83,182;{\Large $B(\tht)$}}
        \lbl[t]{150,90;{\Large $D$}}
        \lbl[t]{229,180;{\Large $C$}}
      \end{lpic}
      \caption{\label{fig:assoc}The relation between a
      credible set $D$ and its associated (minimal)
      confidence set $C$ under $B$
      in Venn diagrams: the extra points $\tht$ in the associated
      confidence set $C$ not included in the credible set
      $D$ are characterized by non-empty intersection
      $B(\tht)\cap D\neq\emptyset$.}
    }
  \end{center}
\end{figure}
Example~\ref{ex:uniform} makes this construction explicit in uniform
spaces and specializes to metric context.
\begin{theorem}
\label{thm:coverage}
Let $\tht_0\in\Tht$ and $0\leq a_n\leq1$, $b_n>0$ such that $a_n=o(b_n)$
be given. Choose priors $\Pi_n$ and let $D_n$ denote level-$(1-a_n)$
credible sets. Furthermore, for all $\tht\in\Tht$, let
$B_n=\{B_n(\tht)\in\scrG:\tht\in\Tht\}$ denote a sequence such that,
\begin{itemize}
\item[(i)] $\Pi_n(B_n(\tht_0))\geq b_n$,
\item[(ii)] 
  $P_{\tht_0,n}\ctg b_na_n^{-1}\, P_n^{\Pi_n|B_n(\tht_0)}$.
\end{itemize}
Then any confidence sets $C_n$ associated with the credible
sets $D_n$ under $B_n$ are asymptotically consistent, \ie\ for
all $\tht_0\in\Tht$,
\begin{equation}
  \label{eq:coverage}
  P_{\tht_0,n}\bigl(\,\tht_0\in C_n(\samplen)\,\bigr)\to 1.
\end{equation}
\end{theorem}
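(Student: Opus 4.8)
The plan is to pass from the coverage probability to an expected posterior mass by means of the definition of an associated confidence set, then to bound that mass under the local prior predictive distribution using Bayes's rule together with the credible-level hypothesis, and finally to transport the estimate to $P_{\tht_0,n}$ by remote contiguity; credible levels will supply the rate $a_n$. First I would fix $\tht_0\in\Tht$ and introduce the event $E_n=\{x\in\scrX_n:B_n(\tht_0)\cap D_n(x)=\emptyset\}$. By Definition~\ref{def:confcred}, any confidence set $C_n$ associated with $D_n$ under $B_n$ satisfies, for each $x$, the implication $\tht_0\notin C_n(x)\Rightarrow B_n(\tht_0)\cap D_n(x)=\emptyset$; hence $\{x:\tht_0\notin C_n(x)\}\subseteq E_n$, so $P_{\tht_0,n}(\tht_0\in C_n(\samplen))\geq 1-P_{\tht_0,n}(E_n)$ and it suffices to prove $P_{\tht_0,n}(E_n)\to0$ (read in outer probability, as in Definition~\ref{def:confset}, should $C_n$ or $D_n$ fail to be suitably measurable).

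Next I would estimate $P_n^{\Pi_n|B_n(\tht_0)}(E_n)$. On $E_n$ one has $B_n(\tht_0)\subseteq\Tht\setminus D_n(x)$, whence $\Pi_n(B_n(\tht_0)\mid x)\leq\Pi_n(\Tht\setminus D_n(x)\mid x)$. Writing Bayes's rule (\ref{eq:disintegration}) in the form $\Pi_n(B_n(\tht_0))\,P_n^{\Pi_n|B_n(\tht_0)}(A)=\int_A\Pi_n(B_n(\tht_0)\mid x)\,dP_n^{\Pi_n}(x)$ and taking $A=E_n$,
\begin{align*}
  P_n^{\Pi_n|B_n(\tht_0)}(E_n)
  &=\frac{1}{\Pi_n(B_n(\tht_0))}\int_{E_n}\Pi_n(B_n(\tht_0)\mid x)\,dP_n^{\Pi_n}(x)\\
  &\leq\frac{1}{\Pi_n(B_n(\tht_0))}\int_{\scrX_n}\Pi_n(\Tht\setminus D_n(x)\mid x)\,dP_n^{\Pi_n}(x).
\end{align*}
By Definition~\ref{def:cred} the integrand is dominated for $P_n^{\Pi_n}$-almost-all $x$ by a deterministic $\eta_n=o(a_n)$, so the integral is $o(a_n)$; combined with hypothesis~{\it(i)}, $\Pi_n(B_n(\tht_0))\geq b_n$, this yields $P_n^{\Pi_n|B_n(\tht_0)}(E_n)=o(a_nb_n^{-1})$.

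Finally I would invoke hypothesis~{\it(ii)}. With $\phi_n=1_{E_n}$ (or a $\scrB_n$-measurable majorant of the same outer expectation), the remote contiguity $P_{\tht_0,n}\ctg b_na_n^{-1}P_n^{\Pi_n|B_n(\tht_0)}$ is, by Definition~\ref{def:remctg} with $\rho_n=a_nb_n^{-1}$, exactly the implication that $P_n^{\Pi_n|B_n(\tht_0)}\phi_n=o(a_nb_n^{-1})$ forces $P_{\tht_0,n}\phi_n=o(1)$. Since the hypothesis of this implication has just been established, $P_{\tht_0,n}(E_n)=P_{\tht_0,n}\phi_n\to0$, and therefore $P_{\tht_0,n}(\tht_0\in C_n(\samplen))\geq 1-P_{\tht_0,n}(E_n)\to1$, which is (\ref{eq:coverage}).

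The only genuine obstacle I anticipate is bookkeeping around measurability rather than analysis: because $x\mapsto D_n(x)$ and $x\mapsto C_n(x)$ are set-valued, neither $E_n$ nor $\{\tht_0\in C_n(x)\}$ need be Borel, so the reduction in the first paragraph and the appeal to Definition~\ref{def:remctg} must be carried out with outer measures and with a $\scrB_n$-measurable envelope of $1_{E_n}$ interposed, in the spirit of the outer-measure convention of Definition~\ref{def:confset}. A secondary care point is to read the ``$o(a_n)$'' of Definition~\ref{def:cred} as the existence of a single sequence $\eta_n=o(a_n)$ with $\Pi_n(\Tht\setminus D_n(x)\mid x)\leq\eta_n$ for almost every $x$, which is what makes the displayed integral bound valid uniformly in $n$; every remaining step is a direct substitution.
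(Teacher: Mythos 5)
Your proof is correct and follows essentially the same route as the paper's: pass from the coverage event to a posterior-mass bound via the definition of the associated confidence set, apply Bayes's rule to translate that into a bound on the $P_n^{\Pi_n|B_n(\tht_0)}$-probability of the bad event of order $o(a_n b_n^{-1})$, and then invoke remote contiguity with $\rho_n=a_n/b_n$. The only (minor) deviation is that you interpose the event $E_n=\{B_n(\tht_0)\cap D_n(\samplen)=\emptyset\}$, which contains $\{\tht_0\notin C_n(\samplen)\}$ and is slightly easier to handle measurably, whereas the paper works directly with the indicator of $\{\tht_0\in\Tht\setminus C_n(\samplen)\}$; your explicit flag about reading the credible level as a single dominating sequence $\eta_n=o(a_n)$ and about outer-measure bookkeeping is a small but genuine improvement in rigor over the paper's more telegraphic argument.
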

This refutes Freedman's lesson, showing that the asymptotic
identification of credible sets and confidence sets in smooth
parametric models (the main inferential implication of the
Bernstein-von~Mises theorem) generalises to the above form of
asymptotic congruence in non-parametric models. The fact
that this statement holds in full
generality implies very practical ways to obtain confidence
sets from posteriors, calculated, simulated or approximated.
A second remark concerns the
confidence levels of associated confidence sets. In order for the
assertion of theorem~\ref{thm:coverage} to be specific regarding the
confidence level (rather than just resulting in asymptotic coverage),
we re-write the last condition of theorem~\ref{thm:coverage} as
follows,
\begin{itemize}
\item[{\it (ii')}] $c_n^{-1}P_{\tht_0,n}
  \ctg b_na_n^{-1}\,P_n^{\Pi_n|B_n(\tht_0)}$,
\end{itemize}
so that the last step in the proof of theorem~\ref{thm:coverage}
is more specific; particularly, assertion~(\ref{eq:coverage}) becomes,
\[
  P_{\tht_0,n}\bigl(\,\tht\in D_n(\samplen)\,\bigr)=o(c_n),
\]
\ie\ the confidence level of the sets $D_n(\samplen)$ is $1-Kc_n$
asymptotically (for some constant $K>0$ and large enough $n$).

The following corollary that specializes to the \iid\ situation
is immediate (see example~\ref{ex:confballs}). Let
$\scrP$ denote a model of single-observation
distributions, endowed with the Hellinger or total-variational
topology.
\begin{corollary}
\label{cor:hellconf}
For $n\geq1$ assume that $(X_1, X_2, \ldots, X_n)\in\scrX^n\sim P_0^n$
for some $P_0\in\scrP$. Let $\Pi_n$ denote Borel priors on $\scrP$,
with constant $C>0$ and rate sequence $\ep_n\downarrow0$ such that
(\ref{eq:GGV}) is satisfied. Denote by $D_n$
credible sets of level $1-\exp(-C'n\ep_n^2)$, for some $C'>C$. Then
the confidence sets $C_n$ associated with $D_n$ under
radius-$\ep_n$ Hellinger-enlargement are asymptotically consistent.
\end{corollary}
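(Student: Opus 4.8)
The plan is to obtain the corollary directly from Theorem~\ref{thm:coverage} in the \iid\ parametrisation ($\Tht=\scrP$, $\tht_0=P_0$, $P_{\tht_0,n}=P_0^n$), feeding it the constructions already assembled in Example~\ref{ex:GGV}. Concretely, I would set $a_n=e^{-C'n\ep_n^2}$, so that the $D_n$ are level-$(1-a_n)$ credible sets in the sense of Definition~\ref{def:cred}; put $b_n=e^{-Cn\ep_n^2}$; and, for each $P\in\scrP$, take for $B_n(P)$ the Kullback--Leibler-type neighbourhood $\{Q\in\scrP:-P\log(dQ/dP)<\ep_n^2,\ P(\log(dQ/dP))^2<\ep_n^2\}$ of Example~\ref{ex:GGV}, noting that $P\in B_n(P)$ as Definition~\ref{def:confcred} requires. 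A credible level $1-a_n$ with $a_n\downarrow0$ forces $n\ep_n^2\to\infty$, so $a_n=o(b_n)$, as Theorem~\ref{thm:coverage} demands.

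It then remains only to check hypotheses~{\it(i)} and~{\it(ii)} of Theorem~\ref{thm:coverage} at $\tht_0=P_0$. Hypothesis~{\it(i)}, $\Pi_n(B_n(P_0))\geq b_n$, is exactly the prior-mass bound~(\ref{eq:GGV}). Hypothesis~{\it(ii)} asks for $P_0^n\ctg b_na_n^{-1}\,P_n^{\Pi_n|B_n(P_0)}$, that is, for $P_0^n\ctg e^{(C'-C)n\ep_n^2}\,P_n^{\Pi_n|B_n(P_0)}$; but this is precisely statement~(\ref{eq:GGVrc}) of Example~\ref{ex:GGV}, obtained there from Lemmas~\ref{lem:ctgGGV} and~\ref{lem:rcfirstlemma}-{\it(ii)}, where one has $P_0^n\ctg e^{cn\ep_n^2}\,P_n^{\Pi_n|B_n(P_0)}$ for every $c>1$, and faster rates are inherited (the remarks following Definition~\ref{def:remctg}). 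Thus~{\it(ii)} holds as soon as the gap $C'-C$ exceeds the constant $1$ in~(\ref{eq:GGVrc}), i.e.\ for $C'$ sufficiently larger than $C$. Theorem~\ref{thm:coverage} now yields that \emph{any} confidence set $C_n$ associated with $D_n$ under the family $\{B_n(P):P\in\scrP\}$ satisfies $P_0^n(P_0\in C_n(\samplen))\to1$.

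The last step is to recognise the radius-$\ep_n$ Hellinger-enlargement among these associated sets. By $H^2(P,Q)\leq -P\log(dQ/dP)$, each $B_n(P)$ is contained in the Hellinger ball $\tilde B_n(P)=\{Q:H(P,Q)<\ep_n\}$ (the inclusion remarked in Example~\ref{ex:GGV} for $P=P_0$). Consequently, if $C_n$ is associated with $D_n$ under the \emph{larger} family $\{\tilde B_n(P)\}$, i.e.\ $\tilde B_n(P)\cap D_n=\emptyset$ whenever $P\notin C_n$, then a fortiori $B_n(P)\cap D_n=\emptyset$ for every such $P$, so $C_n$ is associated with $D_n$ under $\{B_n(P)\}$ too and the previous paragraph applies; in particular the minimal confidence set associated with $D_n$ under radius-$\ep_n$ Hellinger-enlargement, namely $\{P:H(P,Q)<\ep_n\text{ for some }Q\in D_n\}$, is asymptotically consistent. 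The only genuinely delicate point is the exponential-rate bookkeeping in~{\it(ii)}: one must keep straight that~(\ref{eq:GGVrc}) is available only at rates decaying strictly faster than $e^{-n\ep_n^2}$, whereas Theorem~\ref{thm:coverage} requires rate $a_nb_n^{-1}=e^{-(C'-C)n\ep_n^2}$, so that the comparison is what actually dictates how much $C'$ must exceed $C$; the set-theoretic matching of $B_n$ with the Hellinger enlargement, and the monotonicity of ``being an associated confidence set'' under enlarging the family, are routine once Definitions~\ref{def:cred}--\ref{def:confcred} are unwound.
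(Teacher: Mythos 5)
Your proof takes exactly the same route as the paper: set $a_n=e^{-C'n\ep_n^2}$ and $b_n=e^{-Cn\ep_n^2}$, take the $B_n$ of Example~\ref{ex:GGV} so that (\ref{eq:GGV}) delivers condition~(i) of Theorem~\ref{thm:coverage}, invoke (\ref{eq:GGVrc}) for condition~(ii), and conclude; the set-theoretic remark (that the radius-$\ep_n$ Hellinger-enlargement of $D_n$ is associated with $D_n$ under the KL-type family because each $B_n(P)$ sits inside the Hellinger $\ep_n$-ball around $P$) is a useful elaboration of a step the paper's proof leaves implicit. Your flag on the constants is moreover a genuine catch: (\ref{eq:GGVrc}) is established via Lemma~\ref{lem:ctgGGV} only for exponents $c>1$, so condition~(ii) with $b_na_n^{-1}=e^{(C'-C)n\ep_n^2}$ actually requires $C'-C>1$, whereas the corollary and the paper's own proof (``$b_na_n^{-1}=\exp(cn\ep_n^2)$ for some $c>0$'') ask only $C'>C$---an imprecision the paper commits here but avoids in the analogous step of Example~\ref{ex:GGV}, where the constraint $DM^2-C>1$ is stated explicitly.
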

Note that in the above corollary,
\[
  \text{diam}_H(C_n(\samplen)) = \text{diam}_H(D_n(\samplen)) + 2\ep_n,
\]
$P_0^n$-almost surely. If, in addition to the conditions in the above
corollary,
tests satisfying (\ref{eq:bayesiantestingpowerrates}) with
$a_n=\exp(-C'n\ep_n^2)$ exist, the posterior is consistent at rate
$\ep_n$ and sets $D_n(\samplen)$ have diameters decreasing as $\ep_n$,
\cf\ theorem~\ref{thm:rates}. In the case $\ep_n$ is the
minimax rate of convergence for the problem, the confidence sets
$C_n(\samplen)$ attain rate-optimality \cite{Low97}.
Rate-adaptivity \cite{Hengartner95,Cai13,Szabo15} is not possible like
this because a definite, non-data-dependent choice for the
$B_n$ is required.

\section{Conclusions}
\label{sec:disc}

We list and discuss the main conclusions of this paper below.
\begin{itemize}
\item[] {\it Frequentist validity of Bayesian limits}\\
There exists a systematic way of taking Bayesian limits into frequentist
ones, if priors satisfy an extra condition relating true data
distributions to localized prior predictive distributions.
This extra condition generalises Schwartz's Kullback-Leibler
condition and amounts to a weakened form of contiguity,
termed \emph{remote contiguity}.
\end{itemize}
For example regarding consistency with \iid\ data, Doob shows
that a Bayesian form of posterior consistency holds
without any real conditions on the model. To the frequentist,
`holes' of potential inconsistency remain, in null-sets of the
prior. Remote contiguity `fills the holes' and elevates the
Bayesian form of consistency to the frequentist one.
Similarly, prior-almost-surely consistent tests are
promoted to frequentist consistent tests and Bayesian credible
sets are converted to frequentist confidence sets.
\begin{itemize}
\item[] {\it The nature of Bayesian test sequences}\\
The existence of a Bayesian test sequence is equivalent
to consistent posterior convergence in the Bayesian,
prior-almost-sure sense. In theorems above, a Bayesian test
sequence thus represents the Bayesian
limit for which we seek frequentist validity through remote
contiguity. Bayesian test sequences are more abundant
than the more familiar uniform test sequences.
Aside from prior mass requirements
arising from remote contiguity, \emph{the prior should assign
little weight where testing power is weak and much
where testing power is strong}, ideally.
\end{itemize}
Example~\ref{ex:gofmarkov} illustrates the influence of the
prior when constructing a test sequence. Aside from
the familiar lower bounds for prior mass that arise from
remote contiguity, existence of Bayesian tests
also poses upper bounds for prior mass.
\begin{itemize}
\item[] {\it Systematic analysis of complex models and datasets}\\ 
Although many examples have been studied on a case-by-case
basis in the literature, the systematic analysis of limiting
properties of posteriors in cases where the data is dependent,
or where the model, the parameter space and/or the prior
are sample-size dependent, requires generalisation of
Schwartz's theorem and its variations, which the formalism
presented here provides.
\end{itemize}
To elaborate, given the growing interest in the analysis of
dependent datasets gathered from networks (\eg\ by \emph{webcrawlers}
that random walk linked webpages), or from time-series/stochastic
processes (\eg\ financial data of the high-frequency type), or
in the form of high-dimensional or even functional data
(biological, financial, medical and meteorological fields
provide many examples), the development of new Bayesian methods
involving such aspects benefits from a simple, insightful, systematic
perspective to guide the search for suitable priors in concrete
examples.

To illustrate the last point, let us consider consistent
community detection in stochastic block models
\cite{Snijders97,Bickel09}. Bayesian methods have been developed
for consistent selection of the number of
communities \cite{Hayashi16}, for community detection with
a controlled error-rate with a growing number of communities
\cite{Choi12} and for consistent community detection
using empirical priors \cite{Suwan16}. A moment's thought
on the discrete nature of the community assignment
vector suggests a sequence of uniform priors, for which
remote contiguity (of $B_n=\{P_{0,n}\}$) is guaranteed
(at any rate) and prior mass lower bounded by
$b_n=K_n!K_n^{-n}$ (where $K_n$ is the number
of communities at `sample size' $n$). It
would be interesting to see under which conditions a
Bayesian test sequence of power $a_n=o(b_n)$ can be devised that
tests the true assignment vector versus all
alternatives (in the sparse regime \cite{Decelle11,Abbe12,Mossel16}).
Rather than apply a Chernoff bound like in
\cite{Choi12}, one would probably have to start from
the probabilistic \cite{Mossel16} or information-theoretic
\cite{Abbe12} analyses of respective algorithmic
solutions in the (very closely related)
planted bi-section model. If a suitably powerful test can be
shown to exist, theorem~\ref{thm:rates}
proves frequentist consistency of the posterior.

\begin{itemize}
  \item[] {\it Methodology for uncertainty quantification}\\ 
  Use of a prior that induces remote contiguity allows one
  to convert credible sets of a
  calculated, simulated or approximated posterior into asymptotically
  consistent confidence sets, in full generality. This extends the
  main inferential implication of the Bernstein-von~Mises theorem to
  non-parametric models without smoothness conditions.
\end{itemize}

The latter conclusion forms the most important and practically
useful aspect of this paper.


\appendix
\renewcommand{\theequation}{\Alph{section}.\arabic{equation}}


\section{Definitions and conventions}
\label{sec:defs}

Because we take the perspective of a frequentist using Bayesian
methods, we are obliged to demonstrate that Bayesian definitions
continue to make sense under the assumptions that the data $X$ is
distributed according to a true, underlying $P_0$.

\begin{remark}
\label{rem:conv}
We assume given for every $n\geq1$, a measurable (sample) space
$(\scrX_n,\scrB_n)$ and random sample $\samplen\in\scrX_n$,
with a model $\scrP_n$ of probability distributions
$P_n:\scrB_n\rightarrow[0,1]$. It is also assumed that there
exists an $n$-independent parameter space $\Tht$ with a Hausdorff,
completely regular topology $\scrT$ and associated Borel
$\sigma$-algebra $\scrG$,
and, for every $n\geq1$, a bijective model parametrization
$\Tht\rightarrow\scrP_n:\tht\mapsto P_{\tht,n}$ such
that for every $n\geq1$ and every $A\in\scrB_n$, the map
$\Tht\to[0,1]:\tht\mapsto P_{\tht,n}(A)$ is measurable.
Any prior $\Pi$ on $\Tht$ is assumed to be a Borel
probability measure $\Pi:\scrG\to[0,1]$ and can vary with
the sample-size $n$. (Note: in \iid\ setting, the parameter
space $\Tht$ is $\scrP_1$, $\tht$ is the single-observation
distribution $P$ and $\tht\mapsto P_{\tht,n}$ is
$P\mapsto P^n$.)
As frequentists, we assume that there exists a `true, underlying
distribution for the data; in this case, that means that for every
$n\geq1$, there exists a distribution $P_{0,n}$ from which the
$n$-th sample $\samplen$ is drawn.\closebox
\end{remark}

Often one assumes that the model is \emph{well-specified}: that
there exists a $\tht_0\in\Tht$ such that
$P_{0,n}=P_{\tht_0,n}$ for all $n\geq1$. We think
of $\Tht$ as a topological space because we want to discuss
estimation as a procedure of sequential, stochastic approximation of
and convergence to such a `true parameter value $\tht_0$. 
In theorem~\ref{thm:testconsequi} and definition~\ref{def:consistency}
we assume, in addition, that the observations $\samplen$ are
\emph{coupled}, \ie\ there exists a probability space $(\Omega,\scrF,P_0)$
and random variables $\samplen:\Omega\rightarrow\scrX_n$
such that $P_0((\samplen)^{-1}(A))=P_{0,n}(\samplen\in A)$ for all
$n\geq1$ and $A\in\scrB_n$.
\begin{definition}
Given $n,m\geq1$ and a prior probability measure $\Pi_n:\scrG\to[0,1]$,
define the \emph{$n$-th prior predictive distribution on $\scrX_m$} as
follows:
\begin{equation}
  \label{eq:priorpred}
  P_m^{\Pi_n}(A) = \int_\Tht P_{\tht,m}(A)\,d\Pi_n(\tht),
\end{equation}
for all $A\in\scrB_m$. If the prior is replaced by the posterior,
the above defines the \emph{$n$-th posterior predictive distribution
on $\scrX_m$},
\begin{equation}
  \label{eq:posteriorpred}
  P_m^{\Pi_n|\samplen}(A) = \int_\Tht P_{\tht,m}(A)\,d\Pi(\tht|\samplen),
\end{equation}
for all $A\in\scrB_m$. For any $B_n\in\scrG$ with $\Pi_n(B_n)>0$,
define also the \emph{$n$-th local prior predictive distribution on $\scrX_m$},
\begin{equation}
  \label{eq:localpriorpred}
  P_m^{\Pi_n|B_n}(A) = \frac{1}{\Pi_n(B_n)}\int_{B_n} P_{\tht,m}(A)\,d\Pi_n(\tht),
\end{equation}
as the predictive distribution on $\scrX_m$ that results from the prior
$\Pi_n$ when conditioned on $B_n$. If $m$ is not mentioned explicitly, it is
assumed equal to $n$. 
\end{definition}
The prior predictive distribution $P_n^{\Pi_n}$ is the marginal
distribution for $\samplen$ in the Bayesian perspective that
considers parameter and sample jointly
$(\tht,\samplen)\in\Tht\times\scrX_n$
as the random quantity of interest. 
\begin{definition}
\label{def:posterior}
Given $n\geq1$, 
a \emph{(version of) the posterior} is any map
$\Pi(\,\cdot\,|\samplen=\,\cdot\,):\scrG\times\scrX_n\rightarrow[0,1]$
such that,
\begin{itemize}[noitemsep,nolistsep]
\item[(i)] for $B\in\scrG$, the map
  $\scrX_n\rightarrow[0,1]:
    \realizationn\mapsto\Pi(B|\samplen=\realizationn)$ is
  $\scrB_n$-measurable,
\item[(ii)] for all $A\in\scrB_n$ and $V\in\scrG$,
  \begin{equation}
    \label{eq:disintegration}
    \int_A\Pi(V|\samplen)\,dP_n^{\Pi_n} = \int_V P_{\tht,n}(A)\,d\Pi_n(\tht).
  \end{equation}
\end{itemize}
\end{definition}
Bayes's Rule is expressed through equality (\ref{eq:disintegration})
and is sometimes referred to as a `disintegration' (of the joint
distribution of $(\tht,X^n)$). If the posterior is a Markov kernel, it is a
$P_n^{\Pi_n}$-almost-surely well-defined probability measure
on $(\Tht,\scrG)$. But it does not follow from the
definition above that a version of the posterior actually exists
as a regular conditional probability measure. Under mild extra
conditions, regularity of the posterior can be guaranteed: for
example, if sample space and parameter space are Polish, the posterior
is regular; if the model $\scrP_n$ is dominated (denote the density
of $P_{\tht,n}$ by $p_{\tht,n}$), the fraction of integrated likelihoods,
\begin{equation}
  \label{eq:posteriorfraction}
  \Pi(V|\samplen)= 
  {\displaystyle{\int_V p_{\tht,n}(\samplen)\,d\Pi_n(\tht)}} \biggm/
  {\displaystyle{\int_\Tht p_{\tht,n}(\samplen)\,d\Pi_n(\tht)}},
\end{equation}
for $V\in\scrG$, $n\geq1$ defines a regular version of the posterior
distribution. (Note also that there is no room in
definition (\ref{eq:disintegration}) for $\samplen$-dependence
of the prior, so `empirical Bayes' methods must be based on
data $Y^n$ independent of $\samplen$, \ie\ sample-splitting.)

\begin{remark}
\label{rem:conv2}
As a consequence of the frequentist assumption that $\samplen\sim P_{0,n}$
for all $n\geq1$, the $P_n^{\Pi_n}$-almost-sure definition
(\ref{eq:disintegration}) of the posterior $\Pi(V|\samplen)$
does not make sense automatically \cite{Freedman63,Kleijn16}: null-sets of
$P_n^{\Pi_n}$ on which the definition of $\Pi(\,\cdot\,|\samplen)$ is
ill-determined, may not be null-sets of $P_{0,n}$. To prevent this, we
impose the domination condition,
\begin{equation}
\label{eq:dompriorpred}
  P_{0,n} \ll P_{n}^{\Pi_n},
\end{equation}
for every $n\geq1$.\closebox
\end{remark}
To understand the reason for (\ref{eq:dompriorpred}) in a perhaps more
familiar way, consider a dominated model and assume that for certain $n$,
(\ref{eq:dompriorpred}) is {\it not} satisfied. Then, using (\ref{eq:priorpred}),
we find,
\[
  P_{0,n}\Bigl(\int p_{\tht,n}(\samplen)\,d\Pi_n(\tht)=0\Bigr)>0,
\]
so the denominator in
(\ref{eq:posteriorfraction}) evaluates to zero with
non-zero $P_{0,n}$-probability.

To get an idea of sufficient conditions for (\ref{eq:dompriorpred}),
it is noted in \cite{Kleijn16} that in the case
of \iid\ data where $P_{0,n}=P_0^n$ for some marginal
distribution $P_0$, $P_0^n \ll P_{n}^{\Pi}$ for all $n\geq1$, if $P_0$ lies
in the Hellinger- or Kullback-Leibler-support of the prior $\Pi$. For the
generalisation to the present setting we are more precise and weaken
the topology appropriately. 
\begin{definition}
For all $n\geq1$, let $F_n$ denote the class of
all bounded, $\scrB_n$-measurable $f:\scrX_n\to\RR$. The topology
$\scrT_n$ is the initial topology on $\scrP_n$ for
the functions $\{P\mapsto Pf:f\in F_n\}$. \closebox
\end{definition}
Finite intersections of sets $U_{f,\ep}=\{(P,Q)\in\scrP_n^2:|(P-Q)f|<\ep\}$
($f\in\scrF_n$, $0\leq f\leq1$ and $\ep>0$), form
a fundamental system of entourages for a uniformity $\scrU_n$
on $\scrP_n$. A fundamental system of neighbourhoods for the
associated topology $\scrT_n$ on $\scrP$ is formed by finite
intersections of sets of the form,
\[
  W_{P,f,\ep}=\{Q\in\scrP_n:|(P-Q)f|<\ep\},
\]
with $P\in\scrP_n$, $f\in\scrF_n$, $0\leq f\leq1$ and $\ep>0$.

If we model single-observation distributions $P\in\scrP$
for an \iid\ sample, the topology $\scrT_n$ on $\scrP_n=\scrP^n$
induces a topology on $\scrP$ (which we also denote by $\scrT_n$)
for each $n\geq1$. The union $\scrT_{\infty}=\cup_n\scrT_n$ is
an inverse-limit topology that allows formulation of
conditions for the existence of consistent estimates that are
not only sufficient, but also necessary \cite{LeCam60a}, offering
a precise perspective on what is estimable \emph{and what is not}
in \iid\ context. The associated strong topology is that
generated by total variation (or, equivalently, the Hellinger
metric).

For more on these topologies, the reader is referred to Strasser
(1985) \cite{Strasser85} and to Le~Cam (1986) \cite{LeCam86}. We
note explicitly the following fact, which is a direct consequence of
Hoeffding's inequality.
\begin{proposition}
\label{prop:weaktests}
{\it (Uniform $\scrT_n$-tests)}\\
Consider a model $\scrP$ of single-observation distributions $P$ for
\iid\ samples
$(X_1,X_2,\ldots,X_n)\sim P^n$, $(n\geq1)$. Let $m\geq1$,
$\ep>0$, $P_0\in\scrP$
and a measurable $f:\scrX^m\rightarrow[0,1]$ be given. Define
$B = \bigl\{ P\in\scrP\,:\,|(P^m-P_0^m)f|<\ep\bigr\}$, and
$V = \bigl\{ P\in\scrP\,:\,|(P^m-P_0^m)f|\geq2\ep\bigr\}$.
There exist a uniform test sequence $(\phi_n)$ such that,
\[
  \sup_{P\in B} P^n\phi_n\leq e^{-nD},\quad
  \sup_{Q\in V} Q^n(1-\phi_n) \leq e^{-nD},
\]
for some $D>0$.
\end{proposition}
\begin{proof}
The proof is an application of Hoeffding's inequality
for the sum $\sum_{i=1}^nf(X_i)$ and is left to the reader.
\end{proof}
The topologies $\scrT_n$ also play a role for
condition~(\ref{eq:dompriorpred}).
\begin{proposition}
\label{prop:dompriorpred}
Let $(\Pi_n)$ be Borel priors on the Hausdorff uniform spaces $(\scrP_n,
\scrT_n)$. For any $n\geq1$, if $P_{0,n}$ lies in the $\scrT_n$-support of
$\Pi_n$, then $P_{0,n} \ll P_{n}^{\Pi_n}$.
\end{proposition}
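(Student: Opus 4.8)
The plan is to verify absolute continuity directly on null sets: I would fix an arbitrary $A\in\scrB_n$ with $P_n^{\Pi_n}(A)=0$ and show $P_{0,n}(A)=0$. By the definition (\ref{eq:priorpred}) of the prior predictive distribution, $\int_\Tht P_{\tht,n}(A)\,d\Pi_n(\tht)=P_n^{\Pi_n}(A)=0$, and since the integrand is non-negative this forces $P_{\tht,n}(A)=0$ for $\Pi_n$-almost all $\tht$. Setting $S_A=\{\tht\in\Tht:P_{\tht,n}(A)=0\}$ (equivalently $\{P\in\scrP_n:P(A)=0\}$), this says $\Pi_n(S_A)=1$.

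The first genuine step is to observe that $S_A$ is $\scrT_n$-closed, and I expect this to be the point that must be gotten exactly right. By the construction of the weak topology $\scrT_n$ on $\scrP_n$ in appendix~\ref{sec:weaktopologies} (\cf\ Remark~3.6(2) in \cite{Strasser85}), every evaluation map $P\mapsto P(A)$, $A\in\scrB_n$, is lower semicontinuous, so $S_A=\{P:P(A)\leq 0\}$ is closed. This is precisely the content of ``weakening the topology appropriately'': a strictly coarser topology, \eg\ the topology of weak convergence, need not make $P\mapsto P(A)$ lower semicontinuous for all measurable $A$, and the proposition then fails (\eg\ when $\Pi_n$ concentrates on measures placing no mass at an atom of a limiting $P_{0,n}$); on the other hand, any topology at least as fine as the topology of setwise convergence on $\scrB_n$ does the job.

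Once $S_A$ is known to be closed, I would invoke the support hypothesis. The set $S_A^c$ is $\scrT_n$-open with $\Pi_n(S_A^c)=0$, hence is disjoint from $\supp(\Pi_n)$ directly from the definition of the support (no $\tau$-additivity or Radon assumption on $\Pi_n$ is needed, and $\supp(\Pi_n)$ need not carry full mass), so $\supp(\Pi_n)\subseteq S_A$. Since $P_{0,n}\in\supp(\Pi_n)$ by assumption, $P_{0,n}\in S_A$, \ie\ $P_{0,n}(A)=0$. As $A$ ranged over all $P_n^{\Pi_n}$-null sets, this gives $P_{0,n}\ll P_n^{\Pi_n}$. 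No further obstacle arises: the argument is short once the lower semicontinuity of the evaluation maps on $\scrB_n$ — the defining feature of the topology of appendix~\ref{sec:weaktopologies} — is available.
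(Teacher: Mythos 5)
Your proof is correct and rests on the same two ingredients as the paper's: the $\scrT_n$-closedness of $S_A=\{P\in\scrP_n:P(A)=0\}$ and the hypothesis that $P_{0,n}$ lies in the $\scrT_n$-support of $\Pi_n$; the paper reaches the same conclusion by picking the specific basis element $U'=\{\tht:|P_{\tht,n}(A)-P_{\tht_0,n}(A)|<\ep\}$ and letting $\ep\downarrow0$, whereas you phrase it as the containment $\supp(\Pi_n)\subset S_A$. One small precision: since $1_A\in F_n$ for every $A\in\scrB_n$, the evaluation $P\mapsto P(A)$ is in fact fully $\scrT_n$-\emph{continuous}, not merely lower semicontinuous, though l.s.c.\ is of course all you need for $S_A$ to be closed.
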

\begin{proof} 
Let $n\geq1$ be given. For any $A\in\scrB_n$ and any $U'\subset\Tht$
such that $\Pi_n(U')>0$,
\[
  P_{0,n}(A) \leq \int P_{\tht,n}(A)\,d\Pi_n(\tht|U')
    +\sup_{\tht\in U'} |P_{\tht,n}(A)-P_{0,n}(A)|.
\]
Let $A\in\scrB_n$ be a null-set of $P_n^{\Pi_n}$; since $\Pi_n(U')>0$,
$\int P_{\tht,n}(A)\,d\Pi_n(\tht|U')=0$. For some $\ep>0$, take
$U'$ equal to the $\scrT_n$-basis element
$\{\tht\in\Tht:|P_{\tht,n}(A)-P_{\tht_0,n}(A)|<\ep\}$ to conclude that
$P_{\tht_0,n}(A)<\ep$ for all $\ep>0$. 
\end{proof}
In many situations, priors are Borel for the Hellinger topology,
so it is useful to observe that the Hellinger support of $\Pi_n$ in
$\scrP_n$ is always contained in the $\scrT_n$-support. 

\subsubsection*{Notation and conventions}

\lhs\ and \rhs\ refer to left-
and right-hand sides respectively.
For given probability measures $P,Q$ on a measurable space
$(\Omega,\scrF)$, we define
the Radon-Nikodym derivative $dP/dQ:\Omega\to[0,\infty)$,
$P$-almost-surely, referring {\it only} to the $Q$-dominated
component of $P$, following \cite{LeCam86}. We also \emph{define}
$(dP/dQ)^{-1}:\Omega\to(0,\infty]:\omega\mapsto 1/(dP/dQ(\omega))$,
$Q$-almost-surely. Given a $\sigma$-finite measure $\mu$ that
dominates both $P$ and $Q$ (\eg\ $\mu=P+Q$), denote $dP/d\mu=q$
and $dQ/d\mu=p$. Then the measurable map
$p/q\,1\{q>0\}:\Omega\rightarrow[0,\infty)$ is a
$\mu$-almost-everywhere version of $dP/dQ$, and
$q/p\,1\{q>0\}:\Omega\rightarrow[0,\infty]$ of $(dP/dQ)^{-1}$. 
Define total-variational and Hellinger distances
by $\|P-Q\|=\sup_A|P(A)-Q(A)|$ and
$H(P,Q)^2=1/2\int(p^{1/2}-q^{1/2})^2\,d\mu$, respectively.
Given random variables $Z_n\sim P_n$, weak convergence to a random
variable $Z$ is denoted by $Z_n\convweak{P_n}Z$, convergence
in probability by $Z_n\convprob{P_n}Z$ and almost-sure convergence
(with coupling $P^\infty$) by $Z_n\convas{P^{\infty}}Z$.
The integral of a
real-valued, integrable random variable $X$ with respect to a
probability measure $P$ is denoted $PX$, while integrals over
the model with respect to priors and posteriors are always written
out in Leibniz's notation. For any subset $B$ of a topological
space, $\bar{B}$ denotes the closure, $\mathring{B}$ the interior
and $\partial B$ the boundary. Given $\ep>0$ and a metric space
$(\Tht,d)$, the covering number $N(\ep,\Tht,d)\in\NN\cup\{\infty\}$
is the minimal cardinal of a cover of $\Tht$ by $d$-balls of radius
$\ep$. Given real-valued random variables $X_1,\ldots,X_n$, the
first order statistic is $X_{(1)}=\min_{1\leq i\leq n} X_i$.
The Hellinger diameter of a
model subset $C$ is denoted $\text{diam}_H(C)$ and the Euclidean
norm of a vector $\tht\in\RR^n$ is denoted $\|\tht\|_{2,n}$.

\section{Applications and examples}
\label{sec:app}

In this section of the appendix examples and applications are collected.

\subsection{Inconsistent posteriors}
\label{sub:inconsistent}

Calculations that demonstrate instances of posterior inconsistency are
many (for (a far-from-exhaustive list of) examples, see \cite{Diaconis86a,
Diaconis86b, Cox93, Diaconis93, Diaconis98, Freedman83, Freedman99}). In
this subsection, we discuss early examples of posterior
inconsistency that illustrate the potential for problems clearly and without
distracting technicalities.
\begin{example}
\label{ex:freedman63}
{\it (Freedman (1963) \cite{Freedman63})}\\
Consider a sample $X_1,X_2,\ldots$ of random positive integers. Denote the
space of all probability distributions on $\NN$ by $\Lambda$ and assume
that the sample is \iid-$P_0$, for some $P_0\in\Lambda$. For any $P\in\Lambda$,
write $p(i)=P(\{X=i\})$ for all $i\geq1$. The total-variational
and weak topologies on $\Lambda$ are equivalent
(defined, $P\to Q$ if $p(i)\to q(i)$ for all $i\geq1$).
Let $Q\in\Lambda\setminus\{P_0\}$ be given. To arrive at a prior with $P_0$
in its support, leading to a posterior that concentrates on $Q$, we
consider sequences $(P_m)$ and $(Q_n)$ such that $Q_m\to Q$ and
$P_m\to P_0$ as $m\to\infty$. The prior $\Pi$ places masses $\alpha_m>0$
at $P_m$ and $\beta_m>0$ at $Q_m$ ($m\geq1$), so that $P_0$ lies in the
support of $\Pi$. A careful construction of the distributions $Q_m$ 
that involves $P_0$, guarantees that the posterior satisfies,
\[
  \frac{\Pi(\{Q_m\}|\samplen)}{\Pi(\{Q_{m+1}\}|\samplen)}
    \convas{P_0} 0,
\]
that is, posterior mass is shifted further
out into the tail as $n$ grows to infinity, forcing all posterior mass
that resides in $\{Q_m:m\geq1\}$ into arbitrarily small neighbourhoods
of $Q$. In a second step, the distributions $P_m$ and prior weights
$\alpha_m$ are chosen such that 
the likelihood at
$P_m$ grows large for high values of $m$ and small for lower values
as $n$ increases, so that the posterior mass in $\{P_m:m\geq1\}$
also accumulates in the tail. However, the prior weights $\alpha_m$
may be chosen to decrease very fast with $m$, in such a way that,
\[
  \frac{\Pi(\{P_m:m\geq1\}|\samplen)}
       {\Pi(\{Q_m:m\geq1\}|\samplen)} \convas{P_0} 0,
\]
thus forcing all posterior mass into $\{Q_m:m\geq1\}$ as $n$ grows.
Combination of the previous two displays 
leads to the conclusion that for every neighbourhood $U_Q$ of $Q$,
\[
  \Pi(U_Q|\samplen)\convas{P_0}1,
\]
so the posterior is inconsistent. Other choices
of the weights $\alpha_m$ that place more prior mass in the tail
\emph{do} lead to consistent posterior distributions.\closebox
\end{example}
Some objected to Freedman's counterexample, because knowledge
of $P_0$ is required to construct the prior that causes inconsistency.
So it was possible to argue that Freedman's counterexample
amounted to nothing more than a demonstration that unfortunate
circumstances could be created, probably not a fact of great
concern in any generic sense.
To strengthen Freedman's point one would need to construct a prior
of full support without explicit knowledge of $P_0$.
\begin{example}
\label{ex:freedman65}
{\it (Freedman (1965) \cite{Freedman65})}\\
In the setting of example~\ref{ex:freedman63}, denote the space
of all distributions on $\Lambda$ by $\pi(\Lambda)$. Note that
since $\Lambda$ is Polish, so is $\pi(\Lambda)$ and so is the
product $\Lambda\times\pi(\Lambda)$.
\begin{theorem}
\label{thm:freedman}{\it (Freedman (1965) \cite{Freedman65})}\\
Let $X_1,X_2,\ldots$ form an sample of \iid-$P_0$ random integers,
let $\Lambda$
denote the space of all distributions on $\NN$ and let $\pi(\Lambda)$
denote the space of all Borel probability measures on $\Lambda$, both
in Prohorov's weak topology. The set of pairs
$(P_0,\Pi)\in\Lambda\times\pi(\Lambda)$
such that for all open $U\subset\Lambda$,
\[
  \limsup_{n\to\infty} P_0^n\Pi(U|\samplen) = 1,
\]
is residual.
\end{theorem}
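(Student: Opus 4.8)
The plan is to run a Baire category argument in the Polish space $\Lambda\times\pi(\Lambda)$: $\Lambda$ is a $G_\delta$ subset of $[0,1]^{\NN}$, the space of Borel probability measures on a Polish space is Polish, so $\Lambda\times\pi(\Lambda)$ is a Baire space, and it suffices to exhibit the asserted set $G$ of ``wandering'' pairs as a comeagre (\ie\ residual) set. First I would dispose of the well-definedness issue: the set of pairs with $P_0\in\supp\Pi$ is a dense $G_\delta$ — for each radius $r$ the set $\{(P_0,\Pi):\Pi(B(P_0,r))>0\}$ is open, by lower semicontinuity (jointly in $(P_0,\Pi)$) of $\Pi\mapsto\Pi(\text{open set})$, and dense, because adjoining an atom $\eta\delta_{P_0}$ is an $\eta$-small perturbation — and on this $G_\delta$ one has $P_{0,n}\ll P_n^{\Pi}$ for every $n$ (for \iid\ data, $P_0$ lying in the support of $\Pi$ already forces this, \cf\ remark~\ref{rem:conv2}), so the posterior is the genuine ratio~(\ref{eq:posteriorfraction}); we may therefore work entirely inside this $G_\delta$. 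Since $\Lambda$ is second countable, fix a countable base $\{U_k\}_{k\geq1}$ of nonempty open sets; because $U\mapsto\Pi(U\,|\,\cdot)$ is monotone and every nonempty open set contains a member of the base, $G\supseteq\bigcap_k G_k$ where $G_k=\{(P_0,\Pi):\limsup_n P_0^n\Pi(U_k\,|\,\samplen)=1\}$, and since a countable intersection of comeagre sets is comeagre, it is enough that each $G_k$ be comeagre.

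Writing $G_k=\bigcap_{m\geq1}\bigcap_{N\geq1}\bigcup_{n\geq N}A_{k,m,n}$ with $A_{k,m,n}=\{(P_0,\Pi):P_0^n\Pi(U_k\,|\,\samplen)>1-\ft{1}{m}\}$, the next step is to check that $A_{k,m,n}$ is open. Here discreteness of $\NN$ does the work: one has
\[
  P_0^n\Pi(U_k\,|\,\samplen)
  =\sum_{x^n\in\NN^n}\Bigl(\prod_{i=1}^{n}p_0(x_i)\Bigr)\,
  \frac{\int_{U_k}\prod_{i=1}^{n}p(x_i)\,d\Pi(p)}{\int_\Lambda\prod_{i=1}^{n}p(x_i)\,d\Pi(p)},
\]
where $P_0\mapsto\prod_i p_0(x_i)$ is continuous and nonnegative, the denominator is continuous in $\Pi$ and strictly positive on the domination $G_\delta$, and $\Pi\mapsto\int_{U_k}\prod_i p(x_i)\,d\Pi(p)$ is lower semicontinuous, being an integral over an open set of a bounded continuous integrand. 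A nonnegative countable sum of lower semicontinuous functions is lower semicontinuous, so $A_{k,m,n}$ is (relatively) open; hence each $\bigcup_{n\geq N}A_{k,m,n}$ is open, and the theorem reduces to proving this last set \emph{dense}, after which $G_k$ is a countable intersection of dense open sets, hence comeagre.

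That density statement is the crux, and I expect it to be the only genuine difficulty. Given an arbitrary pair $(\bar P_0,\bar\Pi)$ in the domination $G_\delta$, an $\eta>0$, and $k,m,N$, the task is to produce $(P_0,\Pi)$ within $\eta$ of $(\bar P_0,\bar\Pi)$ and some $n\geq N$ with $P_0^n\Pi(U_k\,|\,\samplen)>1-\ft{1}{m}$; this is carried out by the construction already displayed, for a single target distribution, in example~\ref{ex:freedman63}, generalised to a target open set. One perturbs $\bar P_0$ (within total variation $\eta$) by mixing in a mass $\eta$ of a very spread-out, high-entropy tail, placed so far out in $\NN$ that $\bar\Pi$ charges only negligible mass to distributions reproducing it; this forces the empirical distribution to overfit that tail so badly, along some subsequence $n_j\to\infty$, that \emph{every} distribution appreciably charged by $\bar\Pi$ has likelihood super-exponentially suppressed relative to the maximal likelihood. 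One then adds to $\bar\Pi$ a further component, of total mass below $\eta$: a ladder of atoms $Q_1,Q_2,\dots$, all lying in $U_k$ and converging inside $U_k$, each $Q_m$ designed to copy $P_0$ on the far tail (so it explains the data's tail exactly as well as $P_0$ does) while differing from $P_0$ only on a fixed initial block of $\NN$ carrying almost all the mass — which both places $Q_m$ in $U_k$ and costs only a bounded extra Kullback--Leibler deficit.

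Along $n_j$ the integrated likelihood of this ladder then overwhelms that of $\bar\Pi$ by a factor tending to infinity, so $\Pi(U_k\,|\,X^{n_j})\to1$ in $P_0^{n_j}$-probability and $P_0^{n_j}\Pi(U_k\,|\,X^{n_j})>1-\ft{1}{m}$ for a suitable $n_j\geq N$, which is what density requires. The real labour lies in the simultaneous bookkeeping: keeping both perturbations below $\eta$ in the product metric, guaranteeing $Q_m\in U_k$ for \emph{all} $m$, choosing the tail of $P_0$ so that the overfitting is strong enough along $(n_j)$, and calibrating the ladder weights so that the posterior mass is actually carried into $U_k$ rather than stranded at the foot of the ladder. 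This calibration is exactly the content of Freedman's original argument, and I would expect it to be the one part requiring real care.
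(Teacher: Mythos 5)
Your overall skeleton is sound: expressing the ``wandering'' set as $\bigcap_k\bigcap_m\bigcap_N\bigcup_{n\geq N}A_{k,m,n}$ over a countable base, and showing each $A_{k,m,n}$ open on the domination $G_\delta$ via lower semicontinuity of $(P_0,\Pi)\mapsto P_0^n\Pi(U_k|\samplen)$ (a nonnegative countable sum of products of a continuous factor in $P_0$ and an lsc-over-continuous-positive factor in $\Pi$). This is indeed the right way to organise the Baire category argument and it matches the ``continuous extensions'' step in Freedman (1965) that the paper alludes to in example~\ref{ex:contfreedman65}.

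The gap is in the density step, which you correctly identify as the crux but defer to ``Freedman's original argument'' while outlining a construction that is not the one Freedman uses there. The ladder-and-overfitting construction you sketch is the one from Freedman (1963) (the paper's example~\ref{ex:freedman63}), which exhibits a \emph{single} bad pair $(P_0,\Pi)$ with $\Pi$ built from scratch; it does not obviously survive the density task of perturbing an \emph{arbitrary} $(\bar P_0,\bar\Pi)$ by at most $\eta$. In your sketch the atoms $Q_m\in U_k$ necessarily misfit $P_0$ on the dominant initial block, so they lose a factor of order $e^{-(1-\eta)nc}$ against the best-fitting part of $\bar\Pi$ there, and must recoup it from the $O(\eta n)$ tail observations; the bookkeeping between $c$, the tail width $L$, the compact set $K$ carrying $\bar\Pi$-mass $1-\epsilon'$, and the tail leakage $\epsilon''$ is genuinely delicate and you have not shown the constants can be arranged uniformly over $k,m,N$ and arbitrary $\bar\Pi$. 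Freedman's 1965 density construction, the one the paper describes in examples~\ref{ex:freedman65} and~\ref{ex:contfreedman65}, sidesteps all of this: approximate $\bar\Pi$ by a nearby finitely-supported prior whose support points lie in $\Lambda_0=\bigcup_k\Lambda_k$ (distributions with a hole) plus one point $Q\in U_k\cap(\Lambda\setminus\Lambda_0)$, and perturb $\bar P_0$ to a full-support $P_0$. Then once every integer has been observed (which happens $P_0^\infty$-a.s.) the likelihood on the $\Lambda_0$-part is literally zero, so $\Pi(\{Q\}|\samplen)=1$ eventually almost surely, and membership in $\bigcup_{n\geq N}A_{k,m,n}$ follows by dominated convergence with no calibration at all. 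You should replace your soft-suppression sketch with this hard-zero argument; otherwise the density step remains an open gap in the proof.
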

And so, the set of pairs $(P_0,\Pi)\in\Lambda\times\pi(\Lambda)$
for which the limiting behaviour of the posterior is acceptable
to the frequentist, is \emph{meagre} in $\Lambda\times\pi(\Lambda)$. 
The proof relies on the following construction: for $k\geq1$,
define $\Lambda_k$ to be the subset of all probability distributions
$P$ on $\NN$ such that $P(X=k)=0$. Also define $\Lambda_0$ as the
union of all $\Lambda_k$, ($k\geq1$). Pick
$Q\in\Lambda\setminus\Lambda_0$. We assume that
$P_0\in\Lambda\setminus\Lambda_0$ and $P_0\neq Q$. Place a prior $\Pi_0$
on $\Lambda_0$ and choose
$\Pi=\ft12\Pi_0+\ft12\delta_Q$. Because $\Lambda_0$
is dense in $\Lambda$, priors of this type have full support in
$\Lambda$. But $P_0$ has full support in $\NN$ so for every
$k\in\NN$, $P_0^\infty(\exists_{m\geq1}:X_m=k)=1$: note that if
we observe $X_m=k$, the likelihood equals zero on $\Lambda_k$
so that $\Pi(\Lambda_k|\samplen)=0$ for all $n\geq m$,
$P_0^\infty$-almost-surely. Freedman shows this eliminates all of
$\Lambda_0$ asymptotically, if $\Pi_0$ is chosen in a suitable way,
forcing all posterior mass onto the point $\{Q\}$.
(See also, Le~Cam (1986) \cite{LeCam86}, section~17.7).\closebox
\end{example}

The question remains how Freedman's inconsistent posteriors relate
to the work presented here. Since test sequences of exponential power
exist to separate complements of weak neighbourhoods, \cf\
proposition~\ref{prop:weaktests}, Freedman's inconsistencies must violate the
requirement of remote contiguity in theorem~\ref{thm:consistency}. 
\begin{example}
\label{ex:contfreedman65}
As noted already, $\Lambda$ is a Polish space; in particular
$\Lambda$ is metric and second countable, so the subspace
$\Lambda\setminus\Lambda_0$ contains
a countable dense subset $D$. For $Q\in D$, let $V$ be the set of
all prior probability measures on $\Lambda$ with finite support, of which
one point is $Q$ and the remaining points lie in $\Lambda_0$. The proof
of the theorem in \cite{Freedman65} that asserts that the set of 
consistent pairs $(P_0,\Pi)$ is of the first category in
$\Lambda\times\pi(\Lambda)$ departs from the observation that if
$P_0$ lies in $\Lambda\setminus\Lambda_0$ and we use a prior from
$V$, then,
\[
  \Pi(\{Q\}|\samplen)\convas{P_0}1,
\]
(in fact, as is shown below, with $P_0^{\infty}$-probability one there exists
an $N\geq1$ such that $\Pi(\{Q\}|\samplen)=1$ for all $n\geq N$).
The proof continues to assert that $V$ lies dense in $\pi(\Lambda)$,
and, through sequences of continuous extensions involving $D$, that
posterior inconsistency for elements of $V$ implies posterior inconsistency
for all $\Pi$ in $\pi(\Lambda)$ with the possible exception of a set
of the first category.

From the present perspective it is interesting to view the
inconsistency of elements of $V$ in light of the conditions of
theorem~\ref{thm:consistency}. Define, for some bounded
$f:\NN\to\RR$ and $\ep>0$, two subsets of $\Lambda$,
\[
  B = \{ P: |Pf-P_0f|<\ft12\ep\},\quad
  V = \{ P: |Pf-P_0f|\geq\ep \}.
\]
Proposition~\ref{prop:weaktests} asserts the existence of a uniform test sequence for
$B$ versus $V$ of exponential power. With regard to remote contiguity,
for an element $\Pi$ of $V$ with support of order $M+1$, write,
\[
  \Pi = \beta \delta_Q + \sum_{m=1}^M \alpha_m \delta_{P_m},
\]
where $\beta+\sum_{m}\alpha_m=1$ and $P_m\in\Lambda_0$ ($1\leq m\leq M$). 
Without loss of generality, assume that $\ep$ and $f$ are such that
$Q$ does not lie in $B$. Consider,
\[
  \frac{dP_n^{\Pi|B}}{dP_0^n}(\samplen)
    = \frac{1}{\Pi(B)}\int_B \frac{dP^n}{dP_0^n}(\samplen)\,d\Pi(P)
    \leq \frac{1}{\Pi(B)}\sum_{m=1}^M \alpha_m 
      \frac{dP_m^n}{dP_0^n}(\samplen).
\]
For every $1\leq m\leq M$, there exists a $k(m)$ such that $P_m(X=k(m))=0$,
and the probability of the event $E_n$ that none of the
$X_1,\ldots,X_n$ equal $k(m)$ is $(1-P_0(X=k(m)))^n$. Note that
$E_n$ is also the event that ${dP_m^n}/{dP_0^n}(\samplen)>0$.

Hence for every $1\leq m\leq M$ and all $\sample$ in an event
of $P_0^\infty$-probability one, there exists an $N_m\geq1$ such that
${dP_m^n}/{dP_0^n}(\samplen)=0$ for all $n\geq N_m$. Consequently,
for all $\sample$ in an event of $P_0^\infty$-probability one,
there exists an $N\geq1$ such that
${dP_n^{\Pi|B}}/{dP_0^n}(\samplen)=0$ for all $n\geq N$. Therefore,
condition {\it (ii)} of lemma~\ref{lem:rcfirstlemma} is not satisfied
for any sequence $a_n\downarrow0$. A direct proof that (\ref{eq:defrc})
does not hold for any $a_n$ is also possible:
given the prior $\Pi\in V$, define,
\[
  \phi_n(\samplen) = \prod_{m=1}^M 1_{\{ \exists_{1\leq i\leq n} : X_i=k(m) \}}.
\]
Then the expectation of $\phi_n$ with respect to the local prior
predictive distribution equals zero, so $P_n^{\Pi|B}\phi_n=o(a_n)$ for
any $a_n\downarrow0$. However, $P_0^n\phi_n(\samplen)\to1$, so the
prior $\Pi$ does {\it not} give rise to a sequence of prior predictive
distributions $(P_n^{\Pi|B})$ with respect to which $(P_0^n)$ is remotely
contiguous, for any $a_n\downarrow0$.\closebox
\end{example}

\subsection{Consistency, Bayesian tests and the Hellinger metric}
\label{sub:estimation}

Let us first consider characterization of posterior consistency
in terms of the family of real-valued functions on the parameter
space that are bounded and continuous.
\begin{proposition}
\label{prop:prokhorov}
Assume that $\Tht$ is a Hausdorff, completely regular space. The
posterior is consistent at $\tht_0\in\Tht$, if and only if, 
\begin{equation}
  \label{eq:prokhorov}
  \int f(\tht)\,d\Pi(\tht|\samplen) \convprob{P_{\tht_0,n}} f(\tht_0),
\end{equation}
for every bounded, continuous $f:\Tht\to\RR$.
\end{proposition}
\begin{proof} 
Assume (\ref{eq:cons}). Let $f:\Tht\rightarrow\RR$ be
bounded and continuous (with $M>0$ such that $|f|\leq M$).
Let $\eta>0$ be given and let $U\subset\Tht$ be a neighbourhood
of $\tht_0$ such that $|f(\tht)-f(\tht_0)|<\eta$ for all $\tht\in U$.
Integrate $f$ with respect to the ($P_{\tht_0,n}$-almost-surely well-defined)
posterior and to $\delta_{\tht_0}$:
\[
  \begin{split}
  \Bigl|\int &f(\tht)\,d\Pi(\tht|\samplen) -f(\tht_0)\Bigr|\\
  &\leq \int_{\Tht\setminus U} |f(\tht)-f(\tht_0)
    |\,d\Pi(\tht|\samplen)
    + \int_{U} |f(\tht)-f(\tht_0)|\,d\Pi(\tht|\samplen)\\
  &\leq 2M\,\Pi(\Tht\setminus U|\samplen)
  \quad+ \sup_{\tht\in U}|f(\tht)-f(\tht_0)|\,\Pi(U|\samplen)
  \leq \eta + o_{P_{\tht_0,n}}(1),
  \end{split}
\]
as $n\rightarrow\infty$, so that (\ref{eq:prokhorov}) holds.
Conversely, assume (\ref{eq:prokhorov}). Let
$U$ be an open neighbourhood of $\tht_0$. Because $\Tht$ is completely
regular, there exists a continuous $f:\Tht\rightarrow[0,1]$
such that $f=1$ at $\{\tht_0\}$ and $f=0$ on $\Tht\setminus U$. Then,
\[
   \Pi(U|\samplen) 
    \geq \int f(\tht)\,d\Pi(\tht|\samplen) \convprob{P_{\tht_0,n}}
    \int f(\tht)\,d\delta_{\tht_0}(P) = 1.
\]
Consequently, (\ref{eq:cons}) holds.
\end{proof}
Proposition~\ref{prop:prokhorov} is used to prove consistency of
frequentist point-estimators derived from the posterior.
\begin{example}
\label{ex:ptestimator}
Consider a model $\scrP$ of single-observation distributions
$P$ on $(\scrX,\scrB)$ for \iid\ data $(X_1,X_2,\ldots,X_n)\sim P^n$,
$(n\geq1)$. Assume that the
true distribution of the data is $P_0\in\scrP$ and that the model
topology is Prohorov's weak topology or stronger. Then for any
bounded, continuous $g:\scrX\to\RR$, the map,
\[
  f:\scrP\to\RR:P\mapsto \bigl| (P - P_0) g(X) \bigr|,
\]
is continuous. Assuming that the posterior is weakly consistent at $P_0$,
\begin{equation}
  \label{eq:pointestimators}
  \bigl| P_1^{\Pi_n|\samplen}g - P_0g\bigr|
    \leq \int \bigl|(P-P_0) g\bigr| \,d\Pi(P|\samplen)
    \convprob{P_{\tht_0}} 0,
\end{equation}
so posterior predictive distributions are consistent
point estimators in Prohorov's weak topology. Replacing the maps
$g$ by bounded, measurable maps
$\scrX\to\RR$ and assuming posterior consistency in $\scrT_1$,
one proves consistency of posterior predictive distributions
in $\scrT_1$ in exactly the same
way. Taking the supremum over measurable $g:\scrX\to[0,1]$ in
(\ref{eq:pointestimators}) and assuming that the posterior is
consistent in the total variational topology,
posterior predictive distributions are consistent in total
variation as frequentist point estimators. \closebox
\end{example}

The vast majority of non-parametric applications of Bayesian
methods in the literature is based on the intimate relation that
exists between testing and the Hellinger metric (see
\cite{LeCam86}, section~16.4). Proofs concerning posterior
consistency or posterior convergence at a rate rely on the
existence of tests for small parameter subsets $B_n$ surrounding
a point $\tht_0\in\Tht$, versus the complements $V_n$ of
neighbourhoods of the point $\tht_0$. The building block in such
constructions is the following application of the minimax theorem.
\begin{proposition}
\label{prop:minmaxhell}{\it (Minimax Hellinger tests)}\\
Consider a model $\scrP$ of single-observation distributions $P$
for \iid\ data. 
Let $B,V\subset\scrP$ be convex with $H(B,V)>0$.
There exists a test sequence $(\phi_n)$ such that,
\[
  \sup_{P\in B} P^n\phi_n\leq e^{-nH^2(B,V)},\,
  \sup_{Q\in V} Q^n(1-\phi_n) \leq e^{-nH^2(B,V)}.
\]
\end{proposition}
\begin{proof}
This is an application of the minimax theorem. See Le~Cam
(1986) \cite{LeCam86}, section~16.4 for details. 
\end{proof}
Questions concerning consistency require the existence of tests
in which at least one of the two hypotheses is a non-convex set,
typically the complement of a neighbourhood. Imposing the model
$\scrP$ to be of bounded entropy with respect to the Hellinger
metric allows construction of such tests, based on the
uniform tests of proposition~\ref{prop:minmaxhell}.
Below, we apply well-known constructions for the uniform tests
in Schwartz's theorem from the frequentist literature
\cite{LeCam73,Birge83,Birge84,Ghosal00}
to the construction of Bayesian tests.
Due to relations that exist between metrics for model parameters
and the Hellinger metric in many examples and applications, the
material covered here is widely applicable in (non-parametric)
models for \iid\ data.
\begin{example}
\label{ex:entropy}
Consider a model $\scrP$ of distributions $P$
for \iid\ data $X^n\sim P^n$, $(n\geq1)$ and,
in addition, suppose that $\scrP$ is totally bounded with respect
to the Hellinger distance. Let $P_0\in\scrP$ and $\ep>0$ be given,
denote $V(\ep)=\{P\in\scrP:H(P_0,P)\geq4\ep\}$,
$B_H(\ep)=\{P\in\scrP:H(P_0,P)<\ep\}$. There exists an $N(\ep)\geq1$ and a cover
of $V(\ep)$ by $H$-balls $V_1,\ldots,V_{N(\ep)}$ of radius $\ep$ and
for any point $Q$ in any $V_i$ and any $P\in B_H(\ep)$,
$H(Q,P)>2\ep$. According to proposition~\ref{prop:barycentres} with
$\al=1/2$ and (\ref{eq:bayeshelltests}), for
each $1\leq i\leq N(\ep)$ there exists a Bayesian test sequence
$(\phi_{i,n})$ for $B_H(\ep)$ versus $V_i$ of power
(upper bounded by) $\exp(-2n\ep^2)$. Then, for any subset
$B'\subset B_H(\ep)$,
\begin{equation}
  \label{eq:sepbound}
  \begin{split}
  P_{n}^{\Pi|B'}\Pi(&V|\samplen)\leq
  \sum_{i=1}^{N(\ep)}P_{n}^{\Pi|B'}\Pi(V_i|\samplen)\\
  &\leq
  \frac1{\Pi(B')}\sum_{i=1}^{N(\ep)}\Bigl(\int_{B'}P^n\phi_n\,d\Pi(P) 
    + \int_{V_i}P^n(1-\phi_n)\,d\Pi(P)\Bigr)\\
  &\leq \sum_{i=1}^{N(\ep)} \sqrt{\frac{\Pi(V_i)}{\Pi(B')}}\exp(-2n\ep^2),
  \end{split}
\end{equation}
which is smaller than or equal to $e^{-n\ep^2}$ for large enough $n$.
If $\ep=\ep_n$ with $\ep_n\downarrow0$
and $n\ep_n^2\to\infty$, and the model's
Hellinger entropy is upper-bounded by $\log N(\ep_n,\scrP,H)\leq Kn\ep_n^2$
for some $K>0$, the construction extends to tests that separate
$V_n=\{P\in\scrP:H(P_0,P)\geq4\ep_n\}$ from
$B_n=\{P\in\scrP:H(P_0,P)<\ep_n\}$ asymptotically, with power
$\exp(-nL\ep_n^2)$ for some $L>0$. (See also
the so-called \emph{Le~Cam dimension} of a model \cite{LeCam73}
and Birg\'e's rate-oriented work \cite{Birge83,Birge84}.)
It is worth pointing out at this stage that posterior inconsistency
due to the phenomenon of `data tracking' \cite{Barron99,Walker05},
whereby weak posterior consistency holds but
Hellinger consistency fails, can only be due to failure of the
testing condition in the Hellinger case.

Note that the 
argument also extends to models that are Hellinger separable: in that
case (\ref{eq:sepbound}) remains valid, but with $N(\ep)=\infty$. The
mass fractions $\Pi(V_i)/\Pi(B')$ become
important (we point to strong connections with Walker's theorem
\cite{Walker04,Walker07}). Here we see the balance between
prior mass and testing power for Bayesian tests, as intended by
the remark that closes the subsection on the existence of Bayesian
test sequences in section~\ref{sec:post}. 
\closebox
\end{example}

To balance entropy and prior mass differently in Hellinger separable models,
Barron (1988) \cite{Barron88} and Barron \ea\ (1999) \cite{Barron99}
formulate an alternative condition that is based on the Radon property
that any prior on a Polish space has.
\begin{example}
\label{ex:barron}
Consider a model $\scrP$ of distributions $P$
for \iid\ data $X^n\sim P^n$, $(n\geq1)$, with priors
$(\Pi_n)$. Assume that the model $\scrP$ is Polish in the Hellinger
topology. Let $P_0\in\scrP$ and $\ep>0$ be given; for a fixed $M>1$, define
$V=\{P\in\scrP:H(P_0,P)\geq M\ep\}$,
$B_H=\{P\in\scrP:H(P_0,P)<\ep\}$.
For any sequence $\delta_m\downarrow0$, there exist compacta
$K_m\subset\scrP$ such that $\Pi(K_m)\geq 1-\delta_m$ for all $m\geq1$.
For each $m\geq1$, $K_m$ is Hellinger totally bounded so there exists
a Bayesian test sequence $\phi_{m,n}$ for $B_H(\ep)\cap K_m$ versus
$V(\ep)\cap K_m$. Since,
\[
  \begin{split}
    \int_{B_H} P^n&\phi_n\,d\Pi(P)
    + \int_{V} Q^n(1-\phi_n)\,d\Pi(Q)\\
      &\leq \int_{B_H\cap K_m} P^n\phi_{m,n}\,d\Pi(P)
      \quad + \int_{V\cap K_m} Q^n(1-\phi_{m,n})\,d\Pi(Q) + \delta_m,
  \end{split}
\]
and all three terms go to zero, a diagonalization argument confirms the
existence of a Bayesian test for $B_H$ versus $V$.
To control the power of this test and to
generalise to the case where $\ep=\ep_n$ is $n$-dependent,
more is required: as we increase $m$ with $n$, the prior mass $\delta_{m(n)}$
outside of $K_n=K_{m(n)}$ must decrease fast enough, while the order of the
cover must be bounded: if $\Pi_n(K_n)\geq1-\exp(-L_1n\ep_n^2)$ and the
Hellinger entropy of $K_n$ satisfies
$\log N(\ep_n,K_n,H)\leq L_2n\ep_n^2$ for some $L_1,L_2>0$, there
exist $M>1$, $L>0$, and a sequence of tests $(\phi_n)$ such that,
\[
  \int_{B_H(\ep_n)} P^n\phi_n\,d\Pi(P) + \int_{V(\ep_n)} Q^n(1-\phi_n)\,d\Pi(Q)
      \leq e^{-Ln \ep_n^2},
\]
for large enough $n$. (For related constructions, see Barron (1988)
\cite{Barron88}, Barron \ea\ (1999) \cite{Barron99} and Ghosal, Ghosh
and van~der~Vaart (2000) \cite{Ghosal00}.)
\end{example}
To apply corollary~\ref{cor:schwartz} consider the following steps.
\begin{example}
\label{ex:schwartz}
As an example of the tests required under condition~{\it (i)} of
corollary~\ref{cor:schwartz}, consider $\scrP$ in the Hellinger
topology, assuming totally-boundedness.
Let $U$ be the Hellinger-ball of radius $4\ep$ around $P_{\tht_0}$ of
example~\ref{ex:entropy} and let $V$ be its complement. The Hellinger
ball $B_H(\ep)$ in equation~(\ref{eq:sepbound}) contains the set
$K(\ep)$. Alternatively
we may consider the model in any of the weak topologies $\scrT_n$:
let $\ep>0$ be given and let $U$ denote a weak neighbourhood of the form
$\{ P\in\scrP\,:\,|(P^n-P^n_0)f|\geq2\ep\}$, for some bounded measurable
$f:\scrX_n\to[0,1]$, as in proposition~\ref{prop:weaktests}. The set $B$
of proposition~\ref{prop:weaktests} contains a set $K(\delta)$, for some
$\delta>0$. Both these applications were noted by Schwartz in
\cite{Schwartz65}.
\end{example}

\subsection{Some examples of remotely contiguous sequences}

The following two examples illustrate the difference
between contiguity and remote contiguity in the context of
parametric and non-parametric regression.
\begin{example}
\label{ex:regression}
Let $\scrF$ denote a class of functions $\RR\to\RR$. We consider
samples
$\samplen=((X_1,Y_1),\ldots,(X_n,Y_n))$,
($n\geq1$) of points in $\RR^2$, assumed to be related through
$Y_i = f_0(X_i) + e_i$ for some unknown $f_0\in\scrF$, where the
errors are \iid\ standard normal $e_1,\ldots,e_n\sim N(0,1)^n$
and independent of the \iid\ covariates $X_1,\ldots,X_n\sim P^n$,
for some (ancillary) distribution $P$ on $\RR$. It is assumed
that $\scrF\subset L^2(P)$ and we use the $L^2$-norm
$\|f\|^2_{P,2}=\int f^2\,dP$ to define a metric $d$ on $\scrF$,
$d(f,g)=\|f-g\|_{P,2}$. Given a parameter $f\in\scrF$,
denote the sample distributions as $P_{f,n}$. We distinguish
two cases: {\it (a)} the case of linear regression,
where $\scrF=\{f_\tht:\RR\to\RR:\tht\in\Tht\}$, where
$\tht=(a,b)\in\Tht=\RR^2$ and $f_\tht(x)=ax+b$; and {\it (b)}
the case of non-parametric regression, where we do not restrict
$\scrF$ beforehand. 

Let $\Pi$ be a Borel prior $\Pi$ on $\scrF$ and place remote
contiguity in context by assuming, for the moment,
that for some $\rho>0$, there exist $0<r<\rho$
and $\tau>0$, as well as Bayesian tests $\phi_n$ for
$B=\{f\in\scrF:\|f-f_0\|_{P,2}<r\}$ versus
$V=\{f\in\scrF:\|f-f_0\|_{P,2}\geq\rho\}$ under $\Pi$
of power $a_n=\exp(-\ft12n\tau^2)$. If this is the case, we may
assume that $r<\ft12\tau$ without loss of generality.
Suppose also that $\Pi$ has a support
in $L^2(P)$ that contains all of $\scrF$.

Let us concentrate on case {\it (b)} first: a bit of manipulation
casts the $a_n$-rescaled likelihood ratio for $f\in\scrF$ in the
following form,
\begin{equation}
  \label{eq:likelihood}
  a_n^{-1}\frac{dP_{f,n}}{dP_{f_0,n}}(\samplen)
  = e^{ -\ft12\sum_{i=1}^n\left( e_i(f-f_0)(X_i)+(f-f_0)^2(X_i) -\tau^2\right)},
\end{equation}
under $\samplen\sim P_{f_0,n}$. The exponent is controlled by the
law of large numbers,
\[
  \frac{1}{n}\sum_{i=1}^n\bigl( e_i(f-f_0)(X_i)+(f-f_0)^2(X_i)-\tau^2\bigr)
  \convas{P_{f_0}^\infty} \|f-f_0\|^2_{P,2} - \tau^2.
\]
Hence, for every $\ep>0$ there exists an $N(f,\ep)\geq1$ such that
the exponent in (\ref{eq:likelihood}) satisfies the upper bound,
\[
  \sum_{i=1}^n\bigl( e_i(f-f_0)(X_i)+(f-f_0)^2(X_i)-\tau^2 \bigr)
  \leq n(\|f-f_0\|^2_{P,2} - \tau^2 + \ep^2),
\]
for all $n\geq N(f,\ep)$. Since $\Pi(B)>0$, we may condition $\Pi$ 
on $B$, choose $\ep=\ft12\tau$ and use Fatou's inequality to find that,
\[
  \liminf_{n\to\infty} e^{\ft12n\tau^2}
    \frac{dP_n^{\Pi|B}}{dP_{f_0,n}}(\samplen)
    \geq \liminf_{n\to\infty} e^{\ft14n\tau^2}=\infty,
\]
$P^\infty_{f_0}$-almost-surely. Consequently,
for any choice of $\delta$,
\[
  P_{f_0,n}\biggl( \frac{dP_n^{\Pi|B}}{dP_{f_0,n}}(\samplen)
    < \delta\,e^{-\ft12n\tau^2}\biggr) \to 0,
\]
and we conclude that $P_{f_0,n}\ctg e^{\ft12n\tau^2}P_n^{\Pi|B}$.
Based on theorem~\ref{thm:consistency}, we conclude that,
\[
  \Pi\bigl(\,\|f-f_0\|_{P,2}<\rho \bigm|\samplen\bigr)
    \convprob{P_{f_0,n}}1,
\]
\ie\ posterior consistency for the regression function
in $L^2(P)$-norm obtains.
\closebox
\end{example}

\begin{example}
\label{ex:rcLAN}
As for case {\it (a)}, one has the choice
of using a prior like above, but also to proceed differently: 
expression (\ref{eq:likelihood}) can be written in terms of a local
parameter $h\in\RR^k$ which, for given $\tht_0$ and $n\geq1$, is
related to
$\tht$ by $\tht=\tht_0+n^{-1/2}h$. For $h\in\RR^2$, we write
$P_{h,n}=P_{\tht_0+n^{-1/2}h,n}$, $P_{0,n}=P_{\tht_0,n}$
and rewrite the likelihood ratio (\ref{eq:likelihood}) as follows,
\begin{equation}
  \label{eq:LAN}
  \frac{dP_{h,n}}{dP_{0,n}}(\samplen)
    = e^{\frac{1}{\sqrt{n}}\sum_{i=1}^n h\cdot\ell_{\tht_0}(X_i,Y_i)
      -\frac12 h\cdot I_{\tht_0}\cdot h + R_n},
\end{equation}
where $\ell_{\tht_0}:\RR^2\to\RR^2:(x,y)\mapsto(y-a_0x-b_0)(x,1)$
is the score function for $\tht$,
$I_{\tht_0}=P_{\tht_0,1}\ell_{\tht_0}\ell_{\tht_0}^T$ is the Fisher
information matrix and $R_n\convprob{P_{\tht_0,n}}0$. Assume that
$I_{\tht_0}$ is non-singular and note the
central limit,
\[
  \frac{1}{\sqrt{n}}\sum_{i=1}^n \ell_{\tht_0}(X_i,Y_i)
    \convweak{P_{\tht_0,n}} N_2(0,I_{\tht_0}),
\]
which expresses local asymptotic normality of
the model \cite{LeCam60b} and implies that for any fixed $h\in\RR^2$,
$P_{h,n}\ctg P_{0,n}$.
\begin{lemma}
\label{lem:rcLAN}
Assume that the model satisfies LAN condition
(\ref{eq:LAN}) with non-singular $I_{\tht_0}$ and
that the prior $\Pi$ for $\tht$ has a Lebesgue-density
$\pi:\RR^d\to\RR$ that is continuous and strictly positive in
all of $\Tht$. For given $H>0$, define the subsets
$B_n=\{ \tht\in\Tht:\tht=\tht_0+n^{-1/2}h,\|h\|\leq H\}$.
Then,
\begin{equation}
  \label{eq:LANrcctg}
    P_{0,n}\ctg c_n^{-1}P_n^{\Pi|B_n},
\end{equation}
for any $c_n\downarrow0$.
\end{lemma}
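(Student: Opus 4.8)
The plan is to establish~(\ref{eq:LANrcctg}) by checking criterion~{\it(ii)} of lemma~\ref{lem:rcfirstlemma} for the pair $Q_n=P_{0,n}=P_{\tht_0,n}$ and $P_n=P_n^{\Pi|B_n}$ at rate $a_n=c_n$, \ie\ by producing, for each $\ep>0$, a constant $\delta>0$ with $P_{0,n}\bigl(dP_n^{\Pi|B_n}/dP_{0,n}<\delta\,c_n\bigr)<\ep$ \flen. Since $c_n\downarrow0$, it suffices to show that the likelihood ratio
\[
  L_n := \frac{dP_n^{\Pi|B_n}}{dP_{0,n}}(\samplen)
\]
is \emph{uniformly tight from below} under $P_{0,n}$: for every $\ep>0$ there is a constant $\eta>0$ with $P_{0,n}(L_n<\eta)<\ep$ \flen. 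Indeed, with such $\eta$ fixed one has $c_n<\eta$ for $n$ large, hence $\{L_n<c_n\}\subset\{L_n<\eta\}$ and criterion~{\it(ii)} holds with $\delta=1$, giving $P_{0,n}\ctg c_n^{-1}P_n^{\Pi|B_n}$.

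To estimate $L_n$ I would pass to the local parameter. Writing $d\Pi(\tht)=\pi(\tht)\,d\tht$, substituting $\tht=\tht_0+n^{-1/2}h$ in definition~(\ref{eq:localpriorpred}) of the local prior predictive distribution, and cancelling the common factors $n^{-k/2}$ between the integral and the normalisation $\Pi(B_n)$, one obtains
\[
  L_n
  = \frac{\displaystyle\int_{\|h\|\leq H}\frac{dP_{h,n}}{dP_{0,n}}(\samplen)\,\pi(\tht_0+n^{-1/2}h)\,dh}
         {\displaystyle\int_{\|h\|\leq H}\pi(\tht_0+n^{-1/2}h)\,dh},
\]
with $P_{h,n}=P_{\tht_0+n^{-1/2}h,n}$ as in~(\ref{eq:LAN}). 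Continuity and strict positivity of $\pi$ at $\tht_0$ make $\pi$ bounded on a neighbourhood of $\tht_0$ and $\pi(\tht_0+n^{-1/2}h)\to\pi(\tht_0)$ uniformly over $\|h\|\leq H$; hence, writing $V_H$ for the volume of $\{\|h\|\leq H\}$, the denominator is at most $2\pi(\tht_0)V_H$ \flen, while $\pi(\tht_0+n^{-1/2}h)\geq\tfrac12\pi(\tht_0)$ for all $\|h\|\leq H$ \flen. So only a lower bound for the numerator is missing.

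For the numerator I would invoke the LAN expansion~(\ref{eq:LAN}), $\log(dP_{h,n}/dP_{0,n})(\samplen)=h^T\Delta_n-\tfrac12 h^T I_{\tht_0}h+R_n(h)$ with $\Delta_n=n^{-1/2}\sum_{i=1}^n\ell_{\tht_0}(X_i,Y_i)\convweak{P_{\tht_0,n}}N_k(0,I_{\tht_0})$. A weakly convergent sequence of random vectors is uniformly tight, so for each $\ep>0$ there is an $M>0$ with $P_{0,n}(\|\Delta_n\|>M)<\ep$ for all $n$; on the event $\{\|\Delta_n\|\leq M\}\cap\{\sup_{\|h\|\leq H}|R_n(h)|\leq1\}$ every $\|h\|\leq H$ satisfies the deterministic bound $h^T\Delta_n-\tfrac12 h^T I_{\tht_0}h+R_n(h)\geq-HM-\tfrac12 H^2\lambda_{\max}(I_{\tht_0})-1=:-C$, so the numerator is at least $e^{-C}\cdot\tfrac12\pi(\tht_0)V_H>0$. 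Dividing by the denominator bound gives $L_n\geq\eta:=e^{-C}/4$ on that event, whose $P_{0,n}$-probability is at least $1-\ep-P_{0,n}\bigl(\sup_{\|h\|\leq H}|R_n(h)|>1\bigr)\geq1-2\ep$ \flen. This is exactly the lower tightness required in the first paragraph, and~(\ref{eq:LANrcctg}) follows for every $c_n\downarrow0$.

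The single point that needs care — and the main obstacle — is the uniformity $\sup_{\|h\|\leq H}|R_n(h)|\convprob{P_{\tht_0,n}}0$, since~(\ref{eq:LAN}) is stated pointwise in $h$. For finite-dimensional models that are differentiable in quadratic mean this upgrade of LAN to compacta is classical, and in the linear regression model $f_\tht(x)=ax+b$ it is immediate: an explicit computation of~(\ref{eq:likelihood}) in the local parametrisation shows the first-order term is exactly $h^T\Delta_n$, while the remainder is the pure quadratic $R_n(h)=-\tfrac12 h^T\bigl(n^{-1}\sum_i(X_i,1)(X_i,1)^T-I_{\tht_0}\bigr)h$, whose supremum over $\|h\|\leq H$ is at most $\tfrac12 H^2$ times the operator norm of $n^{-1}\sum_i(X_i,1)(X_i,1)^T-I_{\tht_0}$, which vanishes $P_{\tht_0,n}$-almost surely by the law of large numbers; the analogous uniform control holds in a general smooth parametric regression model. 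With this in hand, all remaining steps above are routine.
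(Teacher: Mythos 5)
Your proposal is correct, but it takes a genuinely different route from the paper's. The paper's proof is a one-line citation: lemma~3 of section~8.4 in Le~Cam and Yang (1990) gives contiguity $P_{\tht_0,n}\ctg P_n^{\Pi|B_n}$ under~(\ref{eq:LAN}), and since contiguity is equivalent to $a_n$-remote contiguity at every rate $a_n\downarrow0$, the assertion follows immediately. You instead re-derive that contiguity from first principles via criterion~{\it(ii)} of lemma~\ref{lem:rcfirstlemma}: after the change of variables to the local parameter $h$ and the control of the prior-density factors through continuity and strict positivity of $\pi$ near $\tht_0$, you lower-bound the integrated likelihood ratio by pulling the worst case of the LAN exponent over $\|h\|\leq H$ out of the integral, exploiting tightness of $\Delta_n$ (Prokhorov, from the central limit) and uniform smallness of the remainder. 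This is, in effect, a self-contained proof of the Le~Cam--Yang lemma in the present localized setting, and has the virtue of making the mechanism visible: the three ingredients producing the threshold $\eta$ are prior regularity, tightness of the score, and control of the remainder.

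You also put your finger on precisely the one step that needs more than~(\ref{eq:LAN}) as literally stated: the remainder must vanish uniformly over the compact $\|h\|\leq H$, whereas~(\ref{eq:LAN}) is pointwise in $h$. The paper's citation does not escape this cost — the cited lemma is proved under differentiability in quadratic mean (or a similar hypothesis that delivers exactly this uniformity) — so the two arguments are on the same footing there. Your explicit computation in the linear-regression model, where the remainder is the pure quadratic $R_n(h)=-\tfrac12 h^T\bigl(n^{-1}\sum_i(X_i,1)(X_i,1)^T-I_{\tht_0}\bigr)h$ and the operator norm of the bracket vanishes almost surely by the law of large numbers, closes that gap completely in the case the paper is actually working with.
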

\begin{proof}
According to lemma~3 in section~8.4 of Le~Cam and Yang (1990)
\cite{LeCam90}, $P_{\tht_0,n}$ is contiguous with respect
to $P_n^{\Pi|B_n}$. That implies the assertion.
\end{proof}
Note that for some $K>0$, $\Pi(B_n)\geq b_n:=K(H/\sqrt{n})^d$.
Assume again the existence of Bayesian tests for
$V=\{\tht\in\Tht:\|\tht-\tht_0\|>\rho\}$ (for some $\rho>0$)
versus $B_n$ (or some $B$ such that $B_n\subset B$), of
power $a_n=\exp(-\ft12n\tau^2)$ (for some $\tau>0$).
Then $a_n b_n^{-1}=o(1)$, and, assuming (\ref{eq:LANrcctg}),
theorem~\ref{thm:rates} implies that
$\Pi(\|\tht-\tht_0\|>\rho|\samplen)\convprob{P_{\tht_0,n}}0$,
so consistency is straightforwardly demonstrated.

The case becomes somewhat more complicated if we are interested in
optimality of parametric rates: following the above, a logarithmic
correction arises from the lower bound
$\Pi(B_n)\geq K(H/\sqrt{n})^d$ when combined in the
application of theorem~\ref{thm:rates}. To alleviate this, we
adapt the construction somewhat: define
$V_n=\{\tht\in\Tht: \|\tht-\tht_0\|\leq M_n\,n^{-1/2}\}$ for some
$M_n\to\infty$ and $B_n$ like above. Under the condition that there
exists a uniform test sequence for any \emph{fixed}
$V=\{\tht\in\Tht:\|\tht-\tht_0\|>\rho\}$ versus $B_n$ (see,
for example, \cite{Kleijn12}), uniform test sequences for 
$V_n$ versus $B_n$ of power $e^{-K'M_n^2}$ exist, for some $k'>0$.
Alternatively, assume that the Hellinger distance and the norm on $\Tht$
are related through inequalities of the form,
\[
  K_1\|\tht-\tht'\|\leq H(P_{\tht},P_{\tht'})\leq K_2\|\tht-\tht'\|,
\]
for some constants $K_1,K_2>0$. Then cover $V_n$ with rings,
\[
  V_{n,k}=\biggl\{ \tht\in V_n: \frac{(M_n+k-1)}{\sqrt{n}}
    \leq\|\tht-\tht_0\| \leq \frac{(M_n+k)}{\sqrt{n}}\biggr\},
\]
for $k\geq1$ and cover each ring with balls $V_{n,k,l}$
of radius $n^{-1/2}$, where $1\leq l\leq L_{n,k}$ and $L_{n,k}$
the minimal number of radius-$n^{-1/2}$ balls needed to cover $V_{n,k}$,
related to the \emph{Le~Cam dimension} \cite{LeCam73}. With the $B_n$
defined like above, and the inequality,
\[
  \begin{split}
  \int P_{\tht,n}\Pi(&V_{n,k,l}|\samplen)\,d\Pi_n(\tht|B_n)\\
  &\leq \sup_{\tht\in B_n} P_{\tht,n}\phi_{n,k,l} +
    \frac{\Pi_n(V_{n,k,l})}{\Pi_n(B_n)}
    \sup_{\tht\in V_{n,k,l}} P_{\tht,n}(1-\phi_{n,k,l}),
  \end{split}
\]
where the $\phi_{n,k,l}$ are the uniform minimax tests for $B_n$
versus $V_{n,k,l}$ of lemma \ref{prop:minmaxhell}, of power
$\exp(-K'(M_n+k-1)^2)$ for some $K'>0$. We define
$\phi_{n,k}=\max\{\phi_{n,k,l}:1\leq l\leq L_{n,k}\}$ for $V_{n,k}$
versus $B_n$ and note,
\[
  \int P_{\tht,n}\Pi(V_{n,k}|\samplen)\,d\Pi_n(\tht|B_n)
  \leq \Bigl(L_{n,k} +
    \frac{\Pi_n(V_{n,k})}{\Pi_n(B_n)}\Bigr) e^{-K(M_n+k-1)^2},
\]
where the numbers $L_{n,k}$ are upper bounded by a multiple of
$(M_n+k)^{d}$ and the fraction of
prior masses $\Pi_n(V_{n,k})/\Pi_n(B_n)$ can be controlled without
logarithmic corrections when summing over $k$ next. \closebox

But remote contiguity also applies in more irregular situations: 
example~\ref{ex:noKLpriors} does not
admit KL priors, but satisfies the requirement of remote
contiguity. (Choose $\eta$ equal to the uniform density for
simplicity.)
\begin{example}
\label{ex:rcnoKLpriors}
Consider $X_1,X_2,\ldots$ that form an \iid\ sample from the uniform
distribution on $[\tht,\tht+1]$, for unknown $\tht\in\RR$. The model
is parametrized in terms of distributions $P_\tht$ with Lebesgue
densities of the form $p_{\tht}(x)=1_{ [\tht,\tht+1] }(x)$,
for $\tht\in\Tht=\RR$. Pick a prior $\Pi$ on $\Tht$ with a continuous
and strictly positive Lebesgue density $\pi:\RR\to\RR$ and, for some
rate $\delta_n\downarrow0$, choose $B_n=(\tht_0,\tht_0+\delta_n)$. Note that
for any $\al>0$, there exists an $N\geq1$ such that for all $n\geq N$,
$(1-\al)\pi(\tht_0)\delta_n\leq\Pi(B_n)\leq(1+\al)\pi(\tht_0)\delta_n$.
Note that for any $\tht\in B_n$ and $\samplen\sim
P_{\tht_0}^n$, $dP_{\tht}^n/dP_{\tht_0}^n(\samplen)=1\{ X_{(1)}>\tht \}$,
and correspondingly,
\[
  \begin{split}
  \frac{dP_n^{\Pi|B_n}}{dP_{\tht_0}^n}(\samplen)
    &= \Pi_n(B_n)^{-1}
    \int_{\tht_0}^{\tht_0+\delta_n}1\{ X_{(1)}>\tht \}\,d\Pi(\tht)\\
    &\geq \frac{1-\al}{1+\al}\frac{\delta_n\wedge(X_{(1)}-\tht_0)}{\delta_n},
  \end{split}
\]
for large enough $n$. As a consequence, for every $\delta>0$ and
all $a_n\downarrow0$,
\[
  P_{\tht_0}^n\biggl(\frac{dP_n^{\Pi|B_n}}{dP_{\tht_0}^n}(\samplen)
    < \delta\,a_n \biggr)
  \leq
    P_{\tht_0}^n\bigl(
    \,\delta_n^{-1}(X_{(1)}-\tht_0) < (1+\al) \delta\,a_n \,\bigr),
\]
for large enough $n\geq1$. Since $n(X_{(1)}-\tht_0)$ has an exponential
weak limit under $P_{\tht_0}^n$, we choose $\delta_n=n^{-1}$, so that
the \rhs\ in the above display goes to zero.
So $P_{\tht_0,n} \ctg a_n^{-1}P_n^{\Pi_n|B_n}$, for any $a_n\downarrow0$.
\closebox
\end{example}
To show consistency and derive the posterior rate of convergence in
example~\ref{ex:noKLpriors}, we use theorem~\ref{thm:rates}.
\begin{example}
\label{ex:testnoKLpriors}
Continuing with example~\ref{ex:rcnoKLpriors}, we define
$V_n=\{\tht: \tht-\tht_0>\ep_n \}$. It is noted that, for every
$0<c<1$, the likelihood ratio test,
\[
  \phi_n(X^n)
  = 1\{dP_{\tht_0+\ep_n,n}/dP_{\tht_0,n}(\samplen)>c\} 
    = 1\{ X_{(1)}>\tht_0+\ep_n \},
\]
satisfies $P_{\tht}^n(1-\phi_n)(\samplen)=0$ for all $\tht\in V_n$,
and if we choose $\delta_n=1/2$ and $\ep_n=M_n/n$ for some $M_n\to\infty$,
$P_{\tht}^n\phi_n\leq e^{-M_n+1}$ for all $\tht\in B_n$, so that,
\[
  \int_{B_n}P_{\tht}^n\phi_n(\,d\Pi(\tht)
    + \int_{V_n}P_{\tht}^n(1-\phi_n)\,d\Pi(\tht)
  \leq \Pi(B_n)\,e^{-M_n+1},
\]
Using lemma~\ref{lem:testineq}, we see that
$P_n^{\Pi|B_n}\Pi(V_n|\samplen)\leq e^{-M_n+1}$. Based on the
conclusion of example~\ref{ex:rcnoKLpriors} above, remote contiguity
implies that $P_{\tht_0}^n\Pi(V_n|\samplen)\to0$. Treating the
case $\tht<\tht_0-\ep_n$ similarly, we conclude that the posterior
is consistent at (any $\ep_n$ slower than) rate $1/n$. 
\end{example}

To conclude, we demonstrate the relevance of priors satisfying the
lower bound (\ref{eq:GGV}). Let us repeat lemma~8.1 in \cite{Ghosal00},
to demonstrate that the sequence $(P_0^n)$ is remotely contiguous
with respect to the local prior predictive distributions based on the $B_n$
of example~\ref{ex:GGV}.
\begin{lemma}
\label{lem:ctgGGV}
For all $n\geq1$, assume that $(X_1, X_2, \ldots, X_n)\in\scrX^n\sim P_0^n$
for some $P_0\in\scrP$ and let $\ep_n\downarrow0$ be given. Let $B_n$
be as in example~\ref{ex:GGV}. Then, for any priors $\Pi_n$ such that
$\Pi_n(B_n)>0$,
\[
    P_{\tht_0,n}\biggl( \,\int
    \frac{dP_{\tht}^n}{dP_{\tht_0}^n}(\samplen)\,d\Pi_n(\tht|B_n)
    < e^{-cn\ep_n^2}\biggr)\to0,
\]
for any constant $c>1$.
\end{lemma}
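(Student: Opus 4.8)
The plan is to lower-bound the integrated likelihood ratio by Jensen's inequality, which converts the statement into a second-moment estimate for an average of \iid\ log-likelihood ratios, and then to conclude by Chebyshev's inequality; this is in essence the argument of lemma~8.1 in \cite{Ghosal00}. Throughout I would write $\mu_n=\Pi_n(\,\cdot\,|B_n)$ for the probability measure obtained by restricting $\Pi_n$ to $B_n$ and renormalising (which is legitimate since $\Pi_n(B_n)>0$), and recall that $B_n=\{P:-P_0\log(dP/dP_0)<\ep_n^2,\ P_0(\log(dP/dP_0))^2<\ep_n^2\}$, so that the integral in the statement is $\int(dP^n/dP_0^n)(\samplen)\,d\mu_n(P)$.

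First I would observe that for $P\in B_n$ the finiteness of $-P_0\log(dP/dP_0)$ forces $dP/dP_0>0$ $P_0$-almost surely, so $(dP^n/dP_0^n)(\samplen)=\prod_{i=1}^n(dP/dP_0)(X_i)$; moreover $(x,P)\mapsto\log(dP/dP_0)(x)$ is jointly measurable and $P_0\times\mu_n$-integrable, since Cauchy--Schwarz gives $P_0|\log(dP/dP_0)|\le(P_0(\log(dP/dP_0))^2)^{1/2}<\ep_n$ for every $P\in B_n$. Hence $G_n(x)=\int\log(dP/dP_0)(x)\,d\mu_n(P)$ is a well-defined random variable, and Jensen's inequality for the convex function $t\mapsto e^t$ yields, $P_0^n$-almost surely,
\[
  \int\frac{dP^n}{dP_0^n}(\samplen)\,d\mu_n(P)
   =\int\exp\Bigl(\sum_{i=1}^n\log(dP/dP_0)(X_i)\Bigr)d\mu_n(P)
   \geq\exp\Bigl(\sum_{i=1}^nG_n(X_i)\Bigr),
\]
so it suffices to prove $P_0^n\bigl(\sum_{i=1}^nG_n(X_i)<-cn\ep_n^2\bigr)\to0$.

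Next I would estimate the first two $P_0$-moments of $G_n$. Fubini's theorem together with the first defining inequality of $B_n$ gives $P_0G_n=\int P_0\log(dP/dP_0)\,d\mu_n(P)>-\ep_n^2$. Jensen's inequality for $t\mapsto t^2$ gives $G_n(x)^2\le\int(\log(dP/dP_0)(x))^2\,d\mu_n(P)$, so by the second defining inequality of $B_n$, $P_0G_n^2\le\int P_0(\log(dP/dP_0))^2\,d\mu_n(P)<\ep_n^2$ and in particular $\var_{P_0}(G_n)\le\ep_n^2$. Since the $G_n(X_i)$ are \iid\ under $P_0^n$, Chebyshev's inequality gives
\[
  P_0^n\Bigl(\sum_{i=1}^nG_n(X_i)<-cn\ep_n^2\Bigr)
   \leq P_0^n\Bigl(\Bigl|\sum_{i=1}^n\bigl(G_n(X_i)-P_0G_n\bigr)\Bigr|>(c-1)n\ep_n^2\Bigr)
   \leq\frac{n\ep_n^2}{(c-1)^2n^2\ep_n^4}=\frac{1}{(c-1)^2\,n\ep_n^2},
\]
using $c>1$ and $nP_0G_n>-n\ep_n^2$; this tends to $0$ provided $n\ep_n^2\to\infty$, which holds in the rate-of-convergence setting of example~\ref{ex:GGV} where the lemma is applied. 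Combined with the Jensen lower bound, this proves the claim.

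The main obstacle here is purely bookkeeping rather than analytic depth: one must be careful with the joint measurability of $(x,P)\mapsto\log(dP/dP_0)(x)$ and with the integrability conditions justifying the various Fubini interchanges of $\sum_i$, $\int\cdot\,d\mu_n$ and the $P_0$-expectation — but the uniform bound $P_0|\log(dP/dP_0)|<\ep_n$ over $B_n$ established above takes care of all of these. Everything else amounts to two applications of Jensen's inequality and one of Chebyshev's.
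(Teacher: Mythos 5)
Your argument is correct and is essentially the proof of lemma~8.1 in Ghosal, Ghosh and van~der~Vaart (2000), which the paper simply cites rather than reproves; the two Jensen steps followed by Chebyshev is precisely the standard route. You are also right to flag the hidden hypothesis $n\ep_n^2\to\infty$: the lemma statement in the paper does not make it explicit, but without it the Chebyshev bound $1/((c-1)^2n\ep_n^2)$ need not vanish, and the conclusion would fail — it is imported implicitly from the rate-of-convergence setting of example~\ref{ex:GGV} where $\ep_n$ is a nontrivial nonparametric rate.
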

\end{example}

\subsection{The sparse normal means problem}

For an example of consistency in the false-detection-rate (FDR) sense,
we turn to the most
prototypical instance of sparsity, the so-called \emph{sparse normal
means problem}: in recent years various types of priors have been
proposed for the Bayesian recovery of a nearly-black vector in the
Gaussian sequence model. Most intuitive in this context is the
class of \emph{spike-and-slab priors} \cite{Mitchell88}, which
first select a sparse subset of non-zero components and then
draws those from a product distribution. But other
proposals have also been made,
\eg\ the horseshoe prior \cite{Carvalho10}, a scale-mixture of
normals. Below, we consider FDR-type consistency with
spike-and-slab priors.
\begin{example}
\label{ex:sparsenormalmeans}
Estimation of a nearly-black vector of locations
in the Gaussian sequence model is based on $n$-point samples
$\samplen=(X_1,\ldots,X_n)$ assumed distributed according to,
\begin{equation}
  \label{eq:normalmeans}
  X_i = \tht_i + \vep_i,
\end{equation}
(for all $1\leq i\leq n$), where $\vep_1,\ldots,\vep_n$
form an \iid\ sample of standard-normally distributed errors.
The parameter $\tht$ is a sequence $(\tht_i:i\geq1)$ in $\RR$, with
$n$-dimensional projection $\tht^n=(\tht_1,\ldots,\tht_n)$,
for every $n\geq1$. The corresponding distributions for
$\samplen$ are denoted $P_{\tht,n}$ for all $n\geq1$. 

Denoting by $p_n$ the number of non-zero components of the vector
$\tht^n=(\tht_1,\ldots,\tht_n)\in\RR^n$, sparsity is imposed through
the assumption that $\tht$ is \emph{nearly black}, that is,
$p_n\to\infty$, but $p_n=o(n)$ as $n\to\infty$. For any integer
$0\leq p\leq n$, denote the space
of $n$-dimensional vectors $\tht^n$ with exactly $p$ non-zero
components by $\ell_{0,n}(p)$. For later reference, we
introduce, for every subset $S$ of $I_n:=\{1,\ldots,n\}$, the space
$R_n^S:=\{\tht^n\in\RR^n:\tht_i=1\{i\in S\}\tht_i, 1\leq i\leq n\}$.

Popular sub-problems concern selection of the non-zero components
\cite{Birge01} and (subsequent) minimax-optimal estimation of the
non-zero components \cite{Donoho94} (especially with the LASSO in
related regression problems, see, for example, \cite{Zhang08}).
Many authors have followed Bayesian approaches; for empirical
priors, see \cite{Johnstone04}, and for hierarchical priors, see
\cite{Castillo12} (and references therein).

As $n$ grows, the minimax-rate at which the $L_2$-error
for estimation of $\tht^n$ grows, is bounded in the following,
sparsity-induced way \cite{Donoho92},
\[
  \inf_{\hat{\tht}^n} \sup_{\tht^n\in\ell_{0,n}(p_n)}
    P_{\tht,n} \bigl\| \hat{\tht}^n - \tht^n \bigr\|_{2,n}^2
  \leq 2p_n\log\frac{n}{p_n}(1+o(1)),
\]
as $n\to\infty$, where $\hat{\tht}^n$ runs over all estimators for
$\tht^n$.

A natural proposal for a prior $\Pi$ for $\tht$ \cite{Mitchell88}
(or rather, priors $\Pi_n$ for all $\tht^n$ ($n\geq1$)), is to draw
a sparse $\tht^n$ hierarchically \cite{Castillo12}:
given $n\geq1$, first draw $p\sim \pi_{n}$ (for some
distribution $\pi_{n}$ on $\{0,1,\ldots, n\}$), then draw
a subset $S$ of order $p$ from $\{1,\ldots, n\}$
uniformly at random, and draw $\tht^n$ by setting
$\tht_i=0$ if $i\not\in S$ and $(\tht_i:i\in S)\sim G^{p}$,
for some distribution $G$ on (all of) $\RR$. The components of
$\tht^n$ can therefore be thought of as having been drawn from
a mixture of a distribution degenerate at zero
(the \emph{spike}) and a full-support distribution $G$
(the \emph{slab}).

To show that methods presented in this paper also apply in
complicated problems like this, we give a proof of posterior
convergence in the FDR sense. We appeal freely to useful
results that appeared elsewhere, in particular in \cite{Castillo12}:
we adopt some of Castillo and van~der~Vaart's more technical
steps to reconstitute the FDR-consistency proof based on
Bayesian testing and remote contiguity: to compare, the testing
condition and prior-mass lower bound of theorem~\ref{thm:rates}
are dealt with simultaneously, while the remote contiguity
statement is treated separately. (We stress that only the way
of organising the proof, not the result is new. In fact, we
prove only part of what \cite{Castillo12} achieves.)

Assume that the data follows (\ref{eq:normalmeans})
and denote by $\tht_0$ the true vector of normal means.
For each $n\geq1$, let $p_n$ (respectively $p$) denote number of non-zero
components of $\tht_0^n$ (respectively $\tht^n$).
We do not assume that the true degree of sparsity $p_n$
is fully known, but for simplicity and brevity we assume that
there is a known sequence of upper bounds $q_n$, such that for some
constant $A>1$, $p_n\leq q_n\leq A\,p_n$, for all $n\geq1$.
(Indeed, theorem~2.1 in \cite{Castillo12} very cleverly shows that
if $G$ has a second moment and the prior density for the sparsity
level has a tail that is slim enough, 
then the posterior concentrates on sets of the form,
$\{\tht^n\in\RR^n\,:\,p \leq A p_n\}$ under $P_0$, for some $A>1$.)

Set $r_n^2=p_n\log(n/p_n)$ and define two subsets of $\RR^n$,
\[
  \begin{split}
  V_n &= \bigl\{ \tht^n:\,p \leq A p_n,\,
    \|\tht^n-\tht_0^n\|_{2,n} > M r_n
    \bigr\},\\
  B_n &= \bigl\{ \tht^n:\,
    \|\tht^n-\tht_0^n\|_{2,n} \leq d\, r_n,
    \bigr\},
  \end{split}
\]
assuming for future reference that $\Pi(B_n)>0$.
As for $V_n$, we split further: define, for all $j\geq1$,
\[
  V_{n,j} = \bigl\{ \tht^n\in V_n\,:\,
     jM r_n < \|\tht^n-\tht_0^n\|_{2,n}
       \leq (j+1)M r_n
    \bigr\}.
\]
Next, we subdivide $V_{n,j}$ into intersections with the spaces
$R_n^S$ for $S\subset I_n$: we write
$V_{n,j}=\cup\{V_{n,S,j}:S\subset I_n\}$ with
$V_{n,S,j} = V_{n,j}\cap R_n^S$. For every $n\geq1,j\geq1$
and $S\subset I_n$, we cover $V_{n,S,j}$ by $N_{n,S,j}$
$L_2$-balls $V_{n,j,S,i}$ of radius
$\ft12jM r_n$ and centre points $\tht_{j,S,i}$.
Comparing the problem of covering $V_{n,j}$ with that of
covering $V_{n,1}$, one realizes that $N_{n,S,j}\leq
N_{n,S}:=N_{n,S,1}$. 

Fix $n\geq1$. Due to lemma~\ref{lem:testineq}, for any test
sequences $\phi_{n,j,S,i}$,
\[
  \begin{split}
  &P_n^{\Pi|B_n} \Pi(V_n|\samplen)
  \leq \sum_{j\geq1}\sum_{S\subset I_n}\sum_{i=1}^{N_{n,S,j}}
    P_n^{\Pi|B_n}\Pi(V_{n,j,S,i}|\samplen)\\
  &\,\leq \frac1{\Pi(B_n)}
    \sum_{j\geq1}\sum_{S\subset I_n}\sum_{i=1}^{N_{n,S,j}}\\
  &\qquad\Bigl( \int_{B_n} P_{\tht,n}\phi_{n,j,S,i}\,d\Pi(\tht)
      + \int_{V_{n,j,S,i}} P_{\tht,n}(1-\phi_{n,j,S,i})\,d\Pi(\tht)\Bigr)\\
  &\,\leq \sum_{p=0}^{Ap_n}\Bigl(\genfrac{}{}{0pt}{}{n}{p}\Bigr)\sum_{j\geq1}
      \,N_{n,S}\,\frac{a_n(j)}{b_n},
  \end{split}
\]
where $b_n:=\Pi(B_n)$, $a_n(j)
:= \max_{S\subset I_n,1\leq i\leq N_{n,S,j}} a_n(j,S,i)$
and,
\[
  \begin{split}
  \frac{a_n(j,S,i)}{b_n} &= \int P_{\tht,n}\phi_{n,j,S,i}\,d\Pi(\tht|B_n)\\
    &\qquad+ \frac{\Pi(V_{n,j,S,i})}{\Pi(B_n)}
        \int P_{\tht,n}(1-\phi_{n,j,S,i})\,d\Pi(\tht|V_{n,j,S,i})\\
    &\leq \sup_{\tht^n\in B_n} P_{\tht,n}\phi_{n,j,S,i}
    + \frac{\Pi(V_{n,j,S,i})}{\Pi(B_n)} 
        \sup_{\tht^n\in V_{n,j,S,i}}P_{\tht,n}(1-\phi_{n,j,S,i}).
  \end{split}
\]
A standard argument (see lemma~5.1 in \cite{Castillo12}) shows that
there exists a test $\phi_{n,j,S,i}$ such that,
\[
  \begin{split}
  P_{0,n}\phi_{n,j,S,i}
    + \frac{\Pi(V_{n,j,S,i})}{\Pi(B_n)} 
      &\sup_{\tht^n\in V_{n,j,S,i}}P_{\tht,n}(1-\phi_{n,j,S,i})\\
  &\leq 2\sqrt\frac{\Pi(V_{n,j,S,i})}{\Pi(B_n)}
    e^{-\ft1{128}j^2M^2\,p_n\log(n/p_n)}
  \end{split}
\]
Note that for every measurable $0\leq\phi\leq1$,
Cauchy's inequality implies that, for all $\tht,\tht'\in\RR^n$
\begin{equation}
  \label{eq:CS}
  P_{\tht,n} \phi \leq \bigl( P_{\tht',n}\phi^2 \bigr)^{1/2}
    \bigl( P_{\tht',n}(dP_{\tht,n}/dP_{\tht',n})^2\bigr)^{1/2}
  \leq \bigl( P_{\tht',n}\phi \bigr)^{1/2}
    e^{\ft12\|\tht-\tht'\|_{2,n}^2}
\end{equation}
We use this to generalise the first term in the above display
to the test uniform over $B_n$ at the
expense of an extra factor, that is,
\[
  \begin{split}
   \sup_{\tht^n\in B_n} P_{\tht,n}\phi_{n,j,S,i}
      + \frac{\Pi(V_{n,j,S,i})}{\Pi(B_n)} 
        &\sup_{\tht^n\in V_{n,j,S,i}}P_{\tht,n}(1-\phi_{n,j,S,i})\\
   &\leq 2\sqrt\frac{\Pi(V_{n,j,S,i})}{\Pi(B_n)}
     e^{-\ft1{256}j^2M^2\,r_n^2 + \ft1{2}d^2\,r_n^2}
  \end{split}
\]

In what appears to be one of the essential (and technically
very demanding) points of \cite{Castillo12},
the proofs of the lemma~5.4 (only after the first line)
and of proposition~5.1 show that
there exists a constant $K>0$ such that,
\[
  \sqrt\frac{\Pi(V_{n,j,S,i})}{\Pi(B_n)} \leq e^{Kr_n^2},
\]
if $G$ has a Lebesgue density $g:\RR\to\RR$ such that there exists a
constant $c>0$ such that
$|\log g(\tht)-\log g(\tht')| \leq c(1+|\tht-\tht'|)$ for all
$\tht,\tht'\in\RR$. This allows for demonstration that
(see the final argument in the proof of proposition~5.1 in
\cite{Castillo12}) if we choose $M>0$ large enough, there
exists a constant $K'>0$ such that for large enough $n$,
\[
  P_n^{\Pi|B_n} \Pi(V_n|\samplen) \leq e^{-K'r_n^2}.
\]
Remote contiguity follows from (\ref{eq:CS}): fix some $n\geq1$
and note that for any $\tht^n\in B_n$,
\[
  (P_{0,n}\phi)^2\leq e^{d^2\,r_n^2}P_{\tht,n}\phi.
\]
Integrating with respect to $\Pi(\cdot|B_n)$ on both sides
shows that,
\[
  P_{0,n}\phi \leq e^{\ft{d^2}2\,r_n} (P_n^{\Pi|B_n}\phi)^{1/2},
\]
so that $P_{0,n}\contig e^{d^2\,r_n^2}P_n^{\Pi|B_n}$.
So if we choose $d^2<K'$, remote contiguity guarantees that
$P_{0,n}\Pi(V_n|\samplen)\to0$.
\end{example}

\subsection{Goodness-of-fit Bayes factors for random walks}

Consider the asymptotic consistency of goodness-of-fit
tests for the transition kernel of a Markov chain with
posterior odds or Bayes factors. Bayesian analyses
of Markov chains on a finite state space are found in
\cite{Strelioff07} and references therein. Consistency
results \cf\ \cite{Walker04} for random walk data are
found in \cite{Ghosal04}. Large-deviation
results for posterior distributions are derived in
\cite{Papangelou96,Eichelsbacher02}. The examples below
are based on ergodicity for remote contiguity and Hoeffding's
inequality for uniformly ergodic Markov chains \cite{Meyn93,Glynn02}
to construct suitable tests. We first prove the analogue of
Schwartz's construction in the case of an ergodic random
walk.

Let $(S,\scrS)$ denote a measurable state space for a
discrete-time, stationary Markov process $P$ describing a random
walk $X^n=\{X_i\in S:0\leq i\leq n\}$ of length
$n\geq1$ (conditional on a starting position $X_0$). The
chain has a Markov transition kernel 
$P(\cdot|\cdot):\scrS\times S\to[0,1]$
that describes $X_i|X_{i-1}$ for all $i\geq1$.

Led by Pearson's approach to goodness-of-fit testing, we
choose a finite partition
$\al=\{A_1,\ldots,A_N\}$ of $S$ and `bin the data' in the
sense that we switch to a new process $Z^n$ taking values
in the finite state space $S_\al=\{e_j:1\leq j\leq N\}$
(where $e_j$ denotes the $j$-th standard basis vector in $\RR^n$),
defined by $Z^n=\{Z_i\in S_\al:0\leq i\leq n\}$, with
$Z_i=(1\{X_i\in A_1\},\ldots,1\{X_i\in A_N\})$. The
process $Z^n$ forms a stationary Markov chain on $S_\al$
with distribution $P_{\al,n}$. The model is parametrized in
terms of the convex set $\Tht$ of $N\times N$ Markov transition
matrices $p_\al$ on the finite state space $S_\al$,
\begin{equation}
  \label{eq:alphaproj}
  p_{\al}(k|l)=P_{\al,n}(Z_i=e_k|Z_{i-1}=e_l) = P(X_i\in A_k| X_{i-1}\in A_l),
\end{equation}
for all $0\leq i\leq n$ and $1\leq k,l\leq N$. We assume that
$P_{\al,n}$ is ergodic with equilibrium distribution that we
denote by $\pi_\al$, and $\pi_\al(k):=\pi_\al(Z=k)$.
We are interested in Bayes factors for goodness-of-fit type
questions, given a parameter space consisting of transition
matrices. 
\begin{example}
\label{ex:KLergodic}
Assume that the true transition kernel $P_0$ gives rise to
a matrix $p_0\in\Tht$ that generates an ergodic Markov chain $Z^n$.
Denote the true distribution of $Z^n$ by $P_{0,n}$ and the equilibrium
distribution by $\pi_0$ (with $\pi_0(k):=\pi_0(Z=k)$). For given
$\ep>0$, define,
\[
  B' = \Bigl\{ p_\al\in \Tht:
    \sum_{k,l=1}^N -p_0(l|k)\pi_0(k)\log \frac{p_\al(l|k)}{p_0(l|k)}
    < \ep^2 \Bigr\}.
\]
Assume that $\Pi(B')>0$. According to the ergodic theorem, for every
$p_\al\in B'$,
\[
  \frac1n\sum_{i=1}^n \log \frac{p_\al(Z_i|Z_{i-1})}{p_0(Z_i|Z_{i-1})}
  \convas{P_{0,n}}\sum_{k,l=1}^N p_0(l|k)\pi_0(k)\log \frac{p(l|k)}{p_0(l|k)},
\]
(compare with the rate-function in the large-deviation results in
\cite{Papangelou96,Eichelsbacher02}) so that, for large enough $n$,
\[
  \frac{dP_{\al,n}}{dP_{0,n}}(Z^n) = \prod_{i=1}^n
  \frac{p_\al(Z_i|Z_{i-1})}{p_0(Z_i|Z_{i-1})}
    \geq e^{-\ft{n}{2}\ep^2},
\]
$P_{0,n}$-almost-surely.
Just like in Schwartz's proof \cite{Schwartz65}, in
proposition~\ref{prop:rcfiniteX} and in example~\ref{ex:regression},
the assumption $\Pi(B')>0$ and Fatou's lemma imply remote contiguity
because,
\[
  P_{0,n}\Bigl( \int \frac{dP_{\al,n}}{dP_{0,n}}(Z^n)\,d\Pi(p_\al|B')
    < e^{-\ft{n}{2}\ep^2} \Bigr) \to 0.
\]
So lemma~\ref{lem:rcfirstlemma} says that
$P_{0,n}\contig \exp(\ft{n}{2}\ep^2)P_n^{\Pi|B'}$.
\end{example}
However, exponential remote contiguity will turn out not to be
enough for goodness-of-fit tests below, unless we impose
stringent model conditions. Instead, we shall resort to local
asymptotic normality for a sharper result.
\begin{example}
\label{ex:gofmarkov}
We formulate goodness-of-fit
hypotheses in terms of the joint distribution for two
consecutive steps in the random walk.
Like Pearson, we fix some such distribution $P_0$ and
consider hypotheses based on differences of `bin probabilities'
$p_\al(k,l)=p_\al(k|l)\pi_\al(l)$,
\begin{equation}
  \label{eq:H0H1}
  \begin{split}
    H_0:\max_{1\leq k,l\leq N} \bigl|p_{\al}(k,l)-p_{0}(k,l)\bigr| &< \ep,\\
    H_1:\max_{1\leq k,l\leq N} \bigl|p_{\al}(k,l)-p_{0}(k,l)\bigr| &\geq \ep,
  \end{split}
\end{equation}
for some fixed $\ep>0$. The sets $B$ and $V$ are defined as
the sets of transition matrices $p_\al\in\Tht$ that satisfy
hypotheses $H_0$ and $H_1$ respectively. We assume that
the prior is chosen such that $\Pi(B)>0$ and $\Pi(V)>0$.

Endowed with some matrix norm, $\Tht$ is compact and a
Borel prior on $\Tht$ can be defined in various ways. For example,
we may assign the vector $( p_\al(\cdot|1), \ldots, p_\al(\cdot|N))$  
a product of Dirichlet distributions. Conjugacy applies and
the posterior for $p_\al$ is again a product of Dirichlet
distributions \cite{Strelioff07}. For an alternative family of
priors, consider the set $\scrE$ of $N^N$ $N\times N$-matrices $E$ that
have standard basis vectors $e_k$ in $\RR^N$ as columns. Each
$E\in\scrE$ is a deterministic Markov transition matrix on $S_\al$
and $\scrE$ is the extremal set of the polyhedral set $\Tht$.
According to Choquet's theorem, every transition matrix $p_\al$
can then be written in the form,
\begin{equation}
  \label{eq:choquet}
  p_\al = \sum_{E\in \scrE} \lambda_E\,E,
\end{equation}
for a (non-unique) combination of
$\lambda_\scrE:=\{\lambda_E:E\in\scrE\}$
such that $\lambda_E\geq0$, $\sum_\scrE\lambda_E=1$. If $\lambda_E>0$
for all $E\in\scrE$, the resulting Markov chain is ergodic
and we denote the corresponding distributions for $Z^n$ by $P_{\al,n}$.
Any Borel prior $\Pi'$ (\eg\ a Dirichlet distribution) on the simplex
$S_{N^N}$ in $\RR^{N^N}$ is a prior
for $\lambda_{\scrE}$ and induces a Borel prior $\Pi$ on
$\Tht$. Note that all non-ergodic transition matrices lie in the
boundary $\partial\Tht$, so if we choose $\Pi'$ such that
$\Pi(\mathring{\Tht})=1$, ergodicity may be assumed
in all prior-almost-sure arguments. This is true for any $\Pi'$
that is absolutely continuous with respect to the ($N^N-1$-dimensional)
Lebesgue measure on $S_{N^N}$ (for example when we choose
$\Pi'$ equal to a Dirichlet distribution). Note that if the
associated density is continuous and strictly positive,
$\Pi(B)>0$ and $\Pi(V)>0$. 

We intend to use theorem~\ref{thm:bayesfactor} with $B$ and $V$ defined by
$H_0$ and $H_1$, so we first demonstrate that a Bayesian
test sequence for $B$ versus $V$ exists, based on a version
of Hoeffding's inequality valid for random walks \cite{Glynn02}.
First, define, for given $0<\lambda_n\leq N^{-N}$ such that
$\lambda_n\downarrow0$,
\[
  S'_n:=\bigl\{ \lambda_{\scrE}\in S^{N^N}:
    \lambda_E\geq\lambda_n/N^{N-1}, \text{for all }{E\in\scrE} \bigr\},
\]
and denote the image of $S'_n$ under (\ref{eq:choquet}) by
$S_n$. Note that if $\Pi(\partial\Tht)=0$, then
$\pi_{S,n}:=\Pi(\Tht\setminus S_n)\to0$.

Now fix $n\geq1$ for the moment. Recalling the nature of the matrices
$E$, we see that for every $1\leq k,l\leq N$, $p_\al(k|l)$ as in
equation (\ref{eq:choquet}) is greater than or equal to $\lambda_n$.
Consequently, the corresponding Markov chain satisfies
condition~(A.1) of Glynn and Ormoneit \cite{Glynn02}
(closely related to the notion of uniform ergodicity
\cite{Meyn93}): starting in any point $X_0$ under a
transition from $S_n$, the probability
that $X_1$ lies in $A\subset S_\al$ is greater than or equal to
$\lambda_n\,\phi(A)$, where $\phi$ is the uniform probability
measure on $S_\al$. This mixing condition enables a
version of Hoeffding's inequality (see theorem~2 in
\cite{Glynn02}): for any $\lambda_\scrE\in S'_n$ and
$1\leq k,l\leq N$,
the transition matrix of equation~(\ref{eq:choquet})
is such that, with $\hat{p}_n(k,l)=n^{-1}\sum_i1\{Z_i=k,Z_{i-1}=l\}$,
\begin{equation}
  \label{eq:hoeff}
  P_{\al,n}\bigl( \hat{p}_n(k,l) - p_\al(k,l) \geq\delta \bigr)
    \leq \exp\Bigl( -\frac{\lambda_n^2(n\delta-2\lambda_n^{-1})^2}{2n} \Bigr).
\end{equation}
Now define for a given sequence
$\delta_n>0$ with $\delta_n\downarrow0$ and all $n\geq1,1\leq k,l\leq N$,
\[
  \begin{split}
  B_n&=\{p_\al\in\Tht: 
    \max_{k,l}\bigl|p_{\al}(k,l)-p_{0}(k,l)\bigr| < \ep-\delta_n\},\\
  V_{k,l} &= \{p_\al\in\Tht: 
    \bigl|p_{\al}(k,l)-p_{0}(k,l)\bigr| \geq \ep\},\\
  V_{+,k,l,n}&=\{p_\al\in\Tht: 
    p_{\al}(k,l)-p_{0}(k,l) \geq \ep+\delta_n\},\\
  V_{-,k,l,n}&=\{p_\al\in\Tht: 
    p_{\al}(k,l)-p_{0}(k,l) \leq -\ep-\delta_n\}.
  \end{split}
\]
Note that if $\Pi'$ is absolutely continuous with respect to the
Lebesgue measure on $S^{N^N}$, then $\pi_{B,n}:=\Pi(B\setminus B_n)\to0$
and $\pi_{n,k,l}:=\Pi(V_{k,l}\setminus (V_{+,k,l,n}\cup V_{-,k,l,n}))\to0$.

If we define the test $\phi_{+,k,l,n}(Z^n)
=1\{ \hat{p}_n(k,l) - p_0(k,l) \geq\ep\}$, 
then for any $p_\al\in B_n\cap S_n$,
\[
  \begin{split}
  P_{\al,n}\phi_{+,k,l,n}(Z^n)
    &\leq P_{\al,n}\bigl( \hat{p}_n(k,l) - p_\al(k,l)
      \geq\delta_n \bigr)\\
    &\leq \exp\Bigl(
      -\frac{\lambda_n^2(n\delta_n-2\lambda_n^{-1})^2}{2n} \Bigr).
  \end{split}
\]
If on the other hand, 
$p_\al$ lies in the intersection of $V_{+,n,k,l}$ with $S_n$, we find,
\[
  \begin{split}
  P_{\al,n}(1-\phi_{+,n,k,l}(Z^n))
    &= P_{\al,n}\bigl( \hat{p}_n(k,l) - p_\al(k,l)
      < -\delta_n \bigr)\\
    &\leq \exp\Bigl(
      -\frac{\lambda_n^2(n\delta_n-2\lambda_n^{-1})^2}{2n} \Bigr).
  \end{split}
\]
Choosing the sequences $\delta_n$ and $\lambda_n$ such that
$n\delta_n^2\lambda_n^2\to\infty$, we also have
$\lambda_n^{-1}=o(n\delta_n)$, so the exponent on the right is smaller
than or equal to $-\ft18n\lambda_n^2\delta_n^2$.

So if we define
$\phi_n(Z^n)=\max_{k,l}\{\phi_{-,k,l,n}(Z^n),\phi_{+,k,l,n}(Z^n)\}$,
\[
  \begin{split}
    \int_{B} &P_{\al,n}\phi_n\,d\Pi(p_\al)
      + \int_{V} Q_{\al,n}(1-\phi_n)\,d\Pi(q_\al)\\
      &\leq \int_{B\cap S_n} P_{\al,n}\phi_n\,d\Pi(p_\al)
      + \int_{V\cap S_n} Q_{\al,n}(1-\phi_n)\,d\Pi(q_\al)
      +\Pi(\Tht\setminus S_n)\\
      &\leq \int_{B} \sum_{k,l=1}^N 
        P_{\al,n}(\phi_{-,k,l,n}+\phi_{+,k,l,n})\,d\Pi(p_\al)\\
      &\quad + \sum_{k,l=1}^N \Bigl(\int_{V_{-,k,l}}
          Q_{\al,n}(1-\phi_{-,k,l,n})\,d\Pi(q_\al)\\
      &\quad+ \int_{V_{+,k,l}}
          Q_{\al,n}(1-\phi_{+,k,l,n})\,d\Pi(q_\al)\Bigr)\\
      &\quad+\sum_{k,l=1}^N
        \Pi\bigr(V_{n,k,l}\setminus(V_{+,n,k,l}\cup V_{+,n,k,l})\bigl)
        +\Pi(\Tht\setminus S_n)+\Pi(B\setminus B_n)\\
      &\leq 2N^2 e^{-\ft18n\lambda_n^2\delta_n^2} + \pi_{B,n}+\pi_{S,n}
        +\sum_{k,l=1}^N \pi_{n,k,l}.
  \end{split}
\]
So if we choose a prior $\Pi'$ on $S^{N^N}$ that is absolutely continuous
with respect to Lebesgue measure, then $(\phi_n)$ defines a
Bayesian test sequence for $B$ versus $V$.

Because we have not imposed control over the rates at which the terms
on the \rhs\ go to zero, remote contiguity at exponential rates is
not good enough. Even if we would restrict supports of a sequence of
priors such that $\pi_{B,N}=\pi_{S,n}=\pi_{n,k,l}=0$, the first term
on the \rhs\ is sub-exponential. To obtain a rate sharp enough,
we note that the chain $Z^n$ is positive recurrent, which
guarantees that the dependence $p_\al\to{dP_{\al,n}}/{dP_{0,n}}$
is locally asymptotically normal \cite{Hopfner90,Gobet02}.
According to lemma~\ref{lem:rcLAN}, this implies that local
prior predictive distributions based on $n^{-1/2}$-neighbourhoods
of $p_0$ in $\Tht$ are $c_n$-remotely contiguous to $P_{0,n}$ for
\emph{any} rate $c_n$, if the prior has full support. If we require
that the prior density $\pi'$ with respect to Lebesgue measure on $S^{N^N}$ 
is continuous and strictly positive, then we see that there
exists a constant $\pi>0$ such that $\pi'(\lambda)\geq\pi$ for
all $\lambda\in S^{N^N}$, so that for every $n^{-1/2}$-neighbourhood
$B_n$ of $p_0$, there exists a $K>0$ such that
$\Pi(B_n)\geq b_n:=K\,n^{-{N^N}/2}$. Although local asymptotic
normality guarantees remote contiguity at arbitrary rate, we still have
to make sure that $c_n\to0$ in lemma~\ref{lem:rcLAN}, \ie\ that
$a_n=o(b_n)$. Then the remark directly after theorem~\ref{thm:bayesfactor}
shows that condition~{\it (ii)} of said theorem is satisfied.

The above leads to the following
conclusion concerning goodness-of-fit testing \cf\ (\ref{eq:H0H1}).
\begin{proposition}
\label{prop:gofmarkov}
Let $X^n$ be a stationary, discrete time Markov chain on a
measurable state space $(S,\scrS)$. Choose a finite, measurable
partition $\al$ of $S$ such that the Markov chain $Z^n$
is ergodic. Choose a prior $\Pi'$ on
$S^{N^N}$ absolutely continuous with respect to
Lebesgue measure with a continuous density that is
everywhere strictly positive. Assume that,
\begin{itemize}
  \item[(i)]  $n\lambda_n^2\delta_n^2/\log(n)\to\infty,$
  \item[(ii)] $\Pi(B\setminus B_n),\Pi(\Tht\setminus S_n)=o(n^{-({N^N}/2)})$,
  \item[(iii)] $\max_{k,l}\Pi(V_{k,l}\setminus (V_{+,k,l,n}\cup V_{-,k,l,n}))
  =o(n^{-({N^N}/2)})$.
\end{itemize}
Then for any choice of
$\ep>0$, the Bayes factors $F_n$ are consistent for $H_0$ versus
$H_1$.
\end{proposition}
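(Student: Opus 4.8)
The plan is to deduce the proposition from theorem~\ref{thm:bayesfactor}, applied to the hypotheses $B$ and $V$ attached to (\ref{eq:H0H1}) and to the Bayesian test sequence $(\phi_n)$, $\phi_n=\max\{\phi_{-,k,l,n},\phi_{+,k,l,n}:1\le k,l\le N\}$, constructed above. The first step is to read off an admissible testing power. The testing-error estimate obtained above reads $\int_B P_{\al,n}\phi_n\,d\Pi(p_\al)+\int_V Q_{\al,n}(1-\phi_n)\,d\Pi(q_\al)\le\tau_n$, where
\[
  \tau_n:=2N^2e^{-\ft18 n\lambda_n^2\delta_n^2}+\pi_{B,n}+\pi_{S,n}+\sum_{k,l=1}^N\pi_{n,k,l},
\]
and conditions~{\it(i)}--{\it(iii)} are designed precisely so that $\tau_n=o(n^{-N^N/2})$: by~{\it(i)}, $n\lambda_n^2\delta_n^2/\log n\to\infty$ makes $e^{-\ft18 n\lambda_n^2\delta_n^2}$ decay faster than any power of $n$, while~{\it(ii)} and~{\it(iii)} bound $\pi_{B,n}$, $\pi_{S,n}$ and each $\pi_{n,k,l}$ by $o(n^{-N^N/2})$ directly. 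Writing $b_n:=Kn^{-N^N/2}$ for the lower bound on the prior mass of an $n^{-1/2}$-neighbourhood of a point of $\mathring{\Tht}$ obtained above (uniform over $\mathring{\Tht}$, since $\pi'$ is bounded below by a positive constant), we thus have $\tau_n=o(b_n)$, and I would fix a sequence $a_n\downarrow0$ with $\tau_n=o(a_n)$ and $a_n=o(b_n)$ (such a sequence exists because $\tau_n=o(b_n)$). Then $(\phi_n)$ is a Bayesian test sequence for $B$ versus $V$ of power $a_n$, which is hypothesis~{\it(i)} of theorem~\ref{thm:bayesfactor}.

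Next I would verify the remote-contiguity hypothesis~{\it(ii)} in the local form permitted by the remark following theorem~\ref{thm:bayesfactor}. Fix $\tht=p_\al$ in $B\cap\mathring{\Tht}$; then every entry $p_\al(k|l)$ is positive, so the binned chain $Z^n$ is irreducible and aperiodic on the finite state space $S_\al$, hence ergodic and positive recurrent, and the map $p_\al\mapsto\log(dP_{\al,n}/dP_{0,n})$ is locally asymptotically normal with non-singular information at $p_\al$ \cite{Hopfner90,Gobet02}. Take $B_n(\tht)=\{q_\al\in\Tht:\|q_\al-p_\al\|\le n^{-1/2}\}$, which lies in the open set $B$ once $n$ is large and carries prior mass at least $b_n$; by lemma~\ref{lem:rcLAN} one then has $P_{\tht,n}\ctg c_n^{-1}P_n^{\Pi|B_n(\tht)}$ for \emph{every} $c_n\downarrow0$, and choosing $c_n=a_n b_n^{-1}\downarrow0$ gives $P_{\tht,n}\ctg a_n^{-1}b_n\,P_n^{\Pi|B_n(\tht)}$. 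The remark after theorem~\ref{thm:bayesfactor} (an application of lemma~\ref{lem:rcsubset} with $C_n=B$) then upgrades this to $P_{\tht,n}\ctg a_n^{-1}P_n^{\Pi|B}$, and the identical argument with $V$ in place of $B$ treats $\tht\in V\cap\mathring{\Tht}$. Because $\pi'$ is absolutely continuous with respect to Lebesgue measure, the prior $\Pi$ is carried by $\mathring{\Tht}$, so these are the only parameters that matter; both hypotheses of theorem~\ref{thm:bayesfactor} are then in force, and the theorem yields consistency of the Bayes factor for $B$ versus $V$, \ie\ for $H_0$ versus $H_1$.

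The hard part is not any individual estimate but the rate bookkeeping linking the two hypotheses. Exponential remote contiguity, of the kind the ergodic-theorem argument of example~\ref{ex:KLergodic} supplies, will \emph{not} suffice here: the test power $\tau_n$ is only sub-exponential, so one must instead use the sharper, LAN-driven remote contiguity, which holds at an arbitrary polynomial rate but only for $n^{-1/2}$-neighbourhoods, whose prior mass is merely of order $n^{-N^N/2}$. Everything hinges on being able to insert $a_n$ with $\tau_n=o(a_n)$ and $a_n=o(b_n)$ simultaneously, which is exactly what the $\log n$-sharpening in condition~{\it(i)} and the decay rates in~{\it(ii)}--{\it(iii)} are there to guarantee. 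A secondary technical point is that transition matrices in $\partial\Tht$ may generate non-ergodic chains for which the LAN step is unavailable; these form a $\Pi$-null set and hence do not affect the test side, but since Bayes-factor consistency is a pointwise statement one should either take $\mathring{\Tht}$ as the effective parameter space (consistent with imposing $\Pi(\mathring{\Tht})=1$) or supplement the argument with an ergodic-decomposition analysis at such boundary parameters.
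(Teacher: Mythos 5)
Your proposal is correct and reconstructs essentially the same argument the paper gives in example~\ref{ex:gofmarkov}: the Hoeffding-type testing error $\tau_n$ is forced below $b_n=Kn^{-N^N/2}$ by conditions~{\it(i)}--{\it(iii)}, an intermediate power $a_n$ is inserted between $\tau_n$ and $b_n$, and the LAN-based lemma~\ref{lem:rcLAN} together with lemma~\ref{lem:rcsubset} (via the remark following theorem~\ref{thm:bayesfactor}) supplies remote contiguity at the required rate, after which theorem~\ref{thm:bayesfactor} closes the argument. Your closing observation — that Bayes-factor consistency as defined is a pointwise statement over all $\tht\in B\cup V$ whereas the LAN/ergodicity input is only available on $\mathring{\Tht}$, so one should read the proposition with $\mathring{\Tht}$ as the effective parameter space (consistent with the paper's choice $\Pi(\mathring{\Tht})=1$) — is a legitimate sharpening of a point the paper leaves implicit, not a flaw in your proof.
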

To guarantee ergodicity of $Z^n$ one may use an empirical device,
\ie\ we may use an independent, finite-length realization
of the random walk $X^n$ to find
a partition $\al$ such that for all $1\leq k,l\leq N$, we observe
some $m$-step transition from $l$ to $k$. An interesting generalisation
concerns
a hypothesized Markov transition kernel $P_0$ for the process $X^n$
and partitions $\al_n$ (with projections $p_{0,\al_n}$ as in
(\ref{eq:alphaproj})), chosen such that $\al_{n+1}$ refines $\al_n$ for
all $n\geq1$. Bayes factors then test a sequence of pairs of hypotheses
(\ref{eq:H0H1}) centred on the $p_{0,\al_n}$. The arguments leading
to proposition~\ref{prop:gofmarkov} do not require modification
and the rate of growth $N_n$ comes into the conditions of
proposition~\ref{prop:gofmarkov}. \closebox
\end{example}
Example~\ref{ex:gofmarkov} demonstrates the enhancement of the role
of the prior as intended by the remark that closes the subsection
on the existence of Bayesian test sequences in section~\ref{sec:post}:
where testing power is relatively weak, prior mass should be scarce
to compensate and where testing power is strong, prior mass
should be plentiful. A random walk for which mixing does not occur
quickly enough does not give rise to (\ref{eq:hoeff}) and alternatives
for which separation decreases too fast lose testing power, so the
difference sets of proposition~\ref{prop:gofmarkov} are the
hard-to-test parts of the parameter space and conditions~{\it(ii)--(iii)}
formulate how scarce prior mass in these parts has to be.

\subsection{Finite sample spaces and the tailfree case}

\begin{example}
\label{ex:rcfiniteX}
Consider the situation where we observe an \iid\ sample
of random variables $X_1,X_2,\ldots$ taking values in a space
$\scrX_N$ of \emph{finite order} $N$. Writing $\scrX_N$ as
the set of integers $\{1,\ldots,N\}$, we note that the space
$M$ of all probability measures $P$ on $(\scrX_N,2^{\scrX_N})$ with
the total-variational metric $(P,Q)\mapsto\|P-Q\|$ is in
isometric correspondence with the simplex,
\[
  S_N = \bigl\{ p=(p(1),\ldots,p(N)):\min_kp(k)\geq0,
  \Sigma_i\, p(i)=1 \bigr\},
\]
with the metric $(p,q)\mapsto\|p-q\|=\Sigma_k\,|p(k)-q(k)|$ it
inherits from $\RR^N$ with the $L_1$-norm, when $k\mapsto p(k)$
is the density of $P\in M$ with respect to the
counting measure. We also define $M'=\{P\in M: P(\{k\})>0,
1\leq k\leq N\}\subset M$ (and $R_N=\{p\in S_N:p(k)>0,
1\leq k\leq N\}\subset S_N$). 
\begin{proposition}
\label{prop:rcfiniteX}
If the data is an \iid\ sample of $\scrX_N$-valued random variables,
then for any $n\geq1$, any Borel prior
$\Pi:\scrG\to[0,1]$ of full support on $M$, any $P_0\in M$ and any
ball $B$ around $P_0$, there exists an $\ep'>0$ such that,
\begin{equation}
  \label{eq:rcfiniteX}
  P_0^n\ctg e^{\ft12n\ep^2} P_n^{\Pi|B},
\end{equation}
for all $0<\ep<\ep'$.
\end{proposition}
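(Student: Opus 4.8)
The plan is to locate, inside the ball $B$, a subset of positive prior mass consisting of distributions that are close to $P_0$ in Kullback--Leibler divergence, and then to run on the prior conditioned on that subset the Fatou-lemma argument already used in example~\ref{ex:KLclose} and in examples~\ref{ex:regression} and~\ref{ex:KLergodic}.

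Fix $\ep>0$ and write $K(P_0\,\|\,P)=-P_0\log(dP/dP_0)$, which in the $S_N$-coordinates is the finite sum $\sum_{k:\,p_0(k)>0}p_0(k)\log(p_0(k)/p(k))$. The one place where finiteness of $\scrX_N$ enters is the observation that $P\mapsto K(P_0\,\|\,P)$ is continuous at $P=P_0$ inside $M$ and vanishes there: for $P$ in a small total-variational ball around $P_0$ every coordinate $p(k)$ with $p_0(k)>0$ stays bounded away from $0$, so the displayed finite sum is a continuous function of $p$ there (this is precisely what fails in example~\ref{ex:noKLpriors}). Hence the Borel set $G=\{P\in B:K(P_0\,\|\,P)<\ft12\ep^2\}$ contains a total-variational ball around $P_0$, and since $\Pi$ has full support on $M$ we get $\Pi(G)>0$; in particular $\Pi(B)\geq\Pi(G)>0$, so $P_n^{\Pi|B}$ is well defined.

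Next I would bound the rescaled likelihood ratio from below by shrinking the averaging set to $G$,
\[
  e^{\ft12n\ep^2}\,\frac{dP_n^{\Pi|B}}{dP_0^n}(\samplen)
    \;\geq\; \frac{1}{\Pi(B)}\int_G e^{\ft12n\ep^2}\frac{dP^n}{dP_0^n}(\samplen)\,d\Pi(P),
\]
take $\liminf_n$ on both sides, and pass the $\liminf$ inside the integral over $d\Pi(P)$ by Fatou's lemma. For each fixed $P\in G$ the strong law of large numbers gives $n^{-1}\sum_{i=1}^n\log\bigl(p(X_i)/p_0(X_i)\bigr)\to-K(P_0\,\|\,P)>-\ft12\ep^2$ $P_0^\infty$-almost surely, whence $\ft12\ep^2+n^{-1}\log(dP^n/dP_0^n)(\samplen)\to\ft12\ep^2-K(P_0\,\|\,P)>0$ and therefore $e^{\ft12n\ep^2}(dP^n/dP_0^n)(\samplen)\to\infty$ $P_0^\infty$-almost surely. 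A Fubini argument on $G\times\scrX^\infty$ equipped with $\Pi\otimes P_0^\infty$ (the integrand is jointly measurable since $(dP^n/dP_0^n)(x^n)=\prod_i p(x_i)/p_0(x_i)$ and $P\mapsto p(k)$ is Borel) upgrades this to: for $P_0^\infty$-almost every realization, $\liminf_n e^{\ft12n\ep^2}(dP^n/dP_0^n)(\samplen)=\infty$ for $\Pi$-almost every $P\in G$; as $\Pi(G)>0$ the integral above is then infinite, and so
\[
  \liminf_{n\to\infty}\,e^{\ft12n\ep^2}\,\frac{dP_n^{\Pi|B}}{dP_0^n}(\samplen)=\infty,
  \qquad P_0^\infty\text{-almost surely.}
\]

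Finally, this almost-sure divergence implies, by dominated convergence, that for every $\delta>0$ one has $P_0^n\bigl(dP_n^{\Pi|B}/dP_0^n<\delta\,e^{-\ft12n\ep^2}\bigr)\to0$, which is condition~{\it(ii)} of lemma~\ref{lem:rcfirstlemma} with $Q_n=P_0^n$, $P_n=P_n^{\Pi|B}$ and $a_n=e^{-\ft12n\ep^2}$; hence $P_0^n\ctg e^{\ft12n\ep^2}P_n^{\Pi|B}$. Since $\ep>0$ was arbitrary, (\ref{eq:rcfiniteX}) holds for every $\ep>0$, so the required $\ep'$ exists (indeed any $\ep'>0$ works). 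The main obstacle is interchanging the ``$P_0^\infty$-almost surely'' quantifier with the integral over the prior on $G$; this is handled by the Fatou/Fubini combination, which is legitimate exactly because on a finite sample space the likelihood ratios are jointly measurable and the Kullback--Leibler ball $G$ carries positive prior mass, the latter resting on the continuity of $K(P_0\,\|\,\cdot)$ at $P_0$ that is the genuinely finite-sample-space ingredient.
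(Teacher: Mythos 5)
Your argument is correct and follows essentially the same route as the paper's: locate a Kullback-Leibler neighbourhood of $P_0$ of positive prior mass inside the ball $B$ (using the continuity, on the open face $M'$ of the finite-dimensional simplex, of $P\mapsto -P_0\log(dP/dP_0)$ at $P_0$), bound the $a_n$-rescaled likelihood ratio below by the law of large numbers, and pass the liminf inside the prior integral via Fatou, verifying condition {\it (ii)} of lemma~\ref{lem:rcfirstlemma}. The two minor deviations are improvements rather than gaps: you intersect the Kullback-Leibler set with $B$ instead of requiring it to be contained in $B$, which sidesteps the paper's appeal to an inequality relating total variation to Kullback-Leibler and incidentally shows the conclusion holds for every $\ep>0$; and you spell out explicitly the Fubini step that licenses the a.s.-for-a.e.-$P$ interchange, which the paper leaves implicit.
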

\begin{proof}
By the inequality $\|P-Q\|\leq-P\log(dQ/dP)$, the
ball $B$ around $P_0$ contains all sets of the form
$K(\ep)=\{P\in M':-P_0\log(dP/dP_0)<\ep\}$, for some $\ep'>0$ and all
$0<\ep<\ep'$. Fix such an $\ep$. Because the mapping
$P\mapsto -P_0\log(dP/dP_0)$ is continuous on $M'$, there exists an
open neighbourhood $U$ of $P_0$ in $M$ such that $U\cap M'\subset K(\ep)$.
Since both $M'$ and $U$ are open and $\Pi$ has full support,
$\Pi(K(\ep))\geq\Pi(U\cap M')>0$. With the help of
example~\ref{ex:KLclose}, we see that for every $P\in K(\ep)$,
\[
  e^{\ft12n\ep^2}\frac{dP^n}{dP_0^n}(\samplen)\geq 1,
\]
for large enough $n$, $P_0$-almost-surely. Fatou's lemma again
confirms
condition~{\it (ii)} of
lemma~\ref{lem:rcfirstlemma} is satisfied. Conclude that
assertion~(\ref{eq:rcfiniteX}) holds.
\end{proof}
\end{example}

\begin{example}
\label{ex:finiteX}
We continue with the situation where we observe an \iid\ sample
of random variables $X_1,X_2,\ldots$ taking values in a space
$\scrX_N$ of finite order $N$. For given $\delta>0$, consider
the hypotheses,
\[
  B=\{ P\in M:\|P-P_0\|<\delta \},\,\,
  V=\{ Q\in M:\|Q-P_0\|>2\delta \}.
\]
Noting that $M$ is compact (or with the help of the simplex
representation $S_N$) one sees that entropy numbers of $M$
are bounded, so the construction of example~\ref{ex:entropy}
shows that uniform tests of exponential power $e^{-nD}$
(for some $D>0$) exist for $B$ versus $V$. 
Application of proposition~\ref{prop:rcfiniteX} shows that
the choice for an $0<\ep<\ep'$ small enough, guarantees
that $\Pi(V|\samplen)$ goes to zero in
$P_0^n$-probability. Conclude that the posterior resulting
from a prior $\Pi$ of full support on $M$ is consistent in
total variation.
\end{example}

\begin{example}
\label{ex:tailfree}
With general reference to Ferguson (1973) \cite{Ferguson73},
one way to construct non-parametric priors concerns a
refining sequence of finite, Borel measurable partitions
of a Polish sample space, say $\scrX=\RR$: to define a `random
distribution' $P$ on $\scrX$, we specify for each such
partition $\alpha=\{A_1,\ldots,A_N\}$, a Borel prior
$\Pi_\alpha$ on $S_N$, identifying $(p_1,\ldots,p_N)$
with the `random variables' $(P(A_1),\ldots,P(A_N))$.
Kolmogorov existence of the stochastic process
describing all $P(A)$ in a coupled way subjects these
$\Pi_\alpha$ to consistency requirements expressing
that if $A_1,A_2$ partition $A$, then $P(A_1)+P(A_2)$
must have the same distribution as $P(A)$. If the
partitions refine appropriately, the resulting process
describes a probability measure $\Pi$ on the space of Borel
probability measures on $\scrX$, \ie\ a `random distribution'
on $\scrX$. Well-known examples of priors that
can be constructed in this way are the Dirichlet process
prior (for which a so-called base-measure $\mu$ supplies
appropriate parameters for Dirichlet distributions
$\Pi_{\alpha}$, see \cite{Ferguson73}) and Polya Tree
prior (for detailed explanations, see, for example,
\cite{Ghosh03}).

A special class of priors constructed in this way are
the so-called \emph{tailfree} priors. The process
prior associated with a family of $\Pi_\alpha$ like
above is said to be \emph{tailfree}, if for all
$\alpha,\beta$ such that $\beta=\{B_1,\ldots,B_M\}$
refines $\alpha=\{A_1,\ldots,A_N\}$, the following holds:
for all $1\leq k\leq N$, $(P(B_{l_1}|A_k),\ldots,P(B_{l_L(k)}|A_k))$
(where the sets $B_{l_1},\ldots,B_{l_L(k)}\in\beta$
partition $A_k$) is independent of $(P(A_1),\ldots,P(A_N))$.
Although somewhat technical, explicit
control of the choice for the $\Pi_\alpha$ render
the property quite feasible in examples. 

Fix a finite, measurable partition $\alpha=\{A_1,\ldots,A_N\}$.
For every $n\geq1$, denote by $\sigma_{\al,n}$ the $\sigma$-algebra
$\sigma(\alpha^n)\subset\scrB^n$, generated by products of
the form $A_{i_1}\times \dots\times A_{i_n}\subset\scrX^n$, with
$1\leq i_1,\ldots,i_n\leq N$. Identify $\scrX_N$ with the
collection $\{e_1,\ldots,e_N\}\subset\RR^N$ and define the projection
$\varphi_\alpha:\scrX\mapsto\scrX_N$ by,
\[
  \varphi_\alpha(x)=\bigl(1\{x\in A_1\},\ldots,1\{x\in A_N\}\bigr).
\]
We view $\scrX_N$ (respectively $\scrX_N^n$) as a probability space, with
$\sigma$-algebra $\sigma_N$ equal to the power set (respectively
$\sigma_{N,n}$, the power set of $\scrX_N^n$) and probability
measures denoted $P_\al:\sigma_N\to[0,1]$ that we identify with
elements of $S_N$. Denoting the space of all Borel probability
measures on $\scrX$ by $M^1(\scrX)$, we also define
$\varphi_{\ast\alpha}:M^1(\scrX)\to S_N$,
\[
  \varphi_{\ast\alpha}(P)=\bigl(P(A_1),\ldots,P(A_N)\bigr),
\]
which maps $P$ to its restriction to $\sigma_{\alpha,1}$, a
probability measure on $\scrX_N$. Under the projection $\phi_\alpha$,
any Borel-measurable random variable $X$ taking values in
$\scrX$ distributed $P\in M^1(\scrX)$ is mapped to a random
variable $Z_\al=\varphi_\alpha(X)$ that takes values in $\scrX_N$
(distributed $P_{\alpha}=\varphi_{\ast\alpha}(P)$). We also
define 
$Z_\alpha^n=(\varphi_\alpha(X_1),\ldots, \varphi_\alpha(X_n))$,
for all $n\geq1$. 

Let $\Pi_\alpha$ denote a Borel prior on $S_N$. The posterior on
$S_N$ is then a Borel measure denoted $\Pi_{\alpha}(\cdot|Z_\alpha^n)$,
which satisfies, for all $A\in\sigma_{N,n}$ and any Borel set $V$ in
$S_N$,
\[
  \int_A \Pi_\alpha(V|Z_\alpha^n)\,dP^{\Pi_\alpha}_n
    = \int_V P_\alpha^n(A)\,d\Pi_{\alpha}(P_\alpha),
\]
by definition of the posterior. In the model for the
original \iid\ sample $\samplen$,
Bayes's rule takes the form, for all $A'\in\scrB_n$ and all
Borel sets $V'$ in $M^1(\scrX)$,
\[
  \int_{A'} \Pi(V'|\samplen)\,dP^{\Pi}_n
    = \int_{V'} P^n(A')\,d\Pi(P),
\]
defining the posterior for $P$. Now specify that $V'$ is the
pre-image $\varphi_{\ast\alpha}^{-1}(V)$ of a Borel measurable $V$
in $S_N$:
as a consequence of tailfreeness, the data-dependence of the
posterior for such a $V'$, $\samplen\mapsto\Pi(V'|\samplen)$,
is measurable with respect to 
$\sigma_{\alpha,n}$ (see Freedman (1965) \cite{Freedman65} 
or Ghosh (2003) \cite{Ghosh03}). So there exists a function
$g_n:\scrX_N^n\to[0,1]$ such that,
\[
  \Pi(V'|\samplen=x^n)=g_n(\varphi_\alpha(x_1),\ldots,\varphi_\alpha(x_n)),
\]
for $P_n^\Pi$-almost-all $x^n\in\scrX^n$. Then, for given $A'\in\sigma_{\al,n}$
(with corresponding $A\in\sigma_{N,n}$),
\[
  \begin{split}
  \int_{A'}&\Pi(V'|\samplen)\,dP^{\Pi}_n
    = \int P^n(1_{A'}(\samplen)\,\Pi(V'|\samplen))\,d\Pi(P)\\
    &= \int P_\alpha^n\bigl(1_{A}(Z_\alpha^n)
      \,g_n(Z_\alpha^n)\bigr)\,d\Pi_{\alpha}(P_\alpha)
      = \int_{A} g_n(Z_\alpha^n)\,dP^{\Pi_\alpha}_n,
  \end{split}
\]
while also,
\[
  \int_{V'}P^n(A')\,d\Pi(P) = \int_{V}P_{\alpha}^n(A)\,d\Pi_\alpha(P_\alpha).
\]
This shows that $Z_\alpha^n\mapsto g_n(Z_\alpha^n)$ is a version of
the posterior $\Pi_\alpha(\,\cdot\,|Z_\alpha^n)$. In other words, we
can write $\Pi(V'|\samplen)=\Pi_\al(V|\phi_\alpha(\samplen))
=\Pi_\alpha(V|Z_\alpha^n)$, $P_n^\Pi$-almost-surely.

Denote the true distribution of a single observation from $X^n$
by $P_0$. For any $V'$ of the form $\varphi_{\ast\alpha}^{-1}(V)$
for some $\al$
and a neighbourhood $V$ of $P_{0,\alpha}=\varphi_{\ast\al}(P_0)$ in $S_N$,
the question whether $\Pi(V'|\samplen)$ converges to one in $P_0$-probability
reduces to the question whether $\Pi(V|Z_{\alpha}^n)$ converges to one
in $P_{0,\alpha}$-probability. Remote
contiguity then only has to hold as in example~\ref{ex:rcfiniteX}.

Another way of saying this is to note directly that,
because $\samplen\mapsto\Pi(V'\,|\samplen)$ is
$\sigma_{\alpha,n}$-measurable, remote contiguity
(as in definition~\ref{def:remctg}) is to be
imposed \emph{only} for $\phi_n:\scrX^n\to[0,1]$ that
are measurable with respect to $\sigma_{\alpha,n}$
(rather than $\scrB^n$) for every
$n\geq1$. That conclusion again reduces the remote contiguity
requirement necessary for the consistency of
the posterior for the parameter $(P(A_1),\ldots P(A_N))$
to that of a finite sample space, as in
example~\ref{ex:rcfiniteX}. Full support of the prior
$\Pi_\alpha$ then guarantees remote contiguity
for exponential rates as required in condition~{\it(ii)}
of theorem~\ref{thm:consistency}. In the case of the
Dirichlet process prior, full support of the base measure
$\mu$ implies full support for all $\Pi_\alpha$, if we
restrict attention to partitions $\alpha=(A_1,\ldots,A_N)$
such that $\mu(A_i)>0$ for all $1\leq i\leq N$. (Particularly,
we require $P_0\ll\mu$ for consistent estimation.)

Uniform tests of exponential power
for weak neighbourhoods complete the proof
that tailfree priors lead to weakly consistent
posterior distributions: (norm) consistency of
$\Pi_\alpha(\,\cdot\,|Z_\alpha^n)$ for all $\alpha$
guarantees (weak $\scrT_1$-)consistency
of $\Pi(\,\cdot\,|\samplen)$, in this proof based on remote
contiguity and theorem~\ref{thm:consistency}.

\end{example}

\subsection{Credible/confidence sets in metric spaces}

When enlarging credible sets to confidence sets using a collection
of subsets $B$ as in definition~\ref{def:confcred}, measurability
of confidence sets is guaranteed if $B(\tht)$ is
\emph{open} in $\Tht$ for all $\tht\in\Tht$.
\begin{example}
\label{ex:uniform}
Let $\scrG$ be the Borel $\sigma$-algebra for a uniform topology on
$\Tht$, like the weak and metric topologies of appendix~\ref{sec:defs}.
Let $W$ denote a symmetric entourage and, for every $\tht\in\Tht$, define
$B(\tht)=\{\tht'\in\Tht:(\tht,\tht')\in W\}$, a neighbourhood of $\tht$.
Let $D$ denote any credible set. 
A confidence set associated with $D$ under $B$ is any set $C'$
such that the complement of $D$ contains the $W$-enlargement of the
complement of $C'$. Equivalently (by the symmetry of $W$),
the $W$-enlargement of $D$ does not meet the complement of $C'$.
Then the minimal confidence set $C$ associated with $D$ is the
$W$-enlargement of $D$. If the $B(\tht)$ are all open neighbourhoods
(\eg\ whenever $W$ is a symmetric entourage from a fundamental system
for the uniformity on $\Tht$), the minimal confidence set associated
with $D$ is open. The most common examples include the Hellinger or
total-variational metric uniformities, but weak topologies (like Prohorov's
or $\scrT_n$-topologies) and polar topologies are uniform too.\closebox
\end{example}
\begin{example}
\label{ex:confballs}
To illustrate example~\ref{ex:uniform} with a customary situation, consider
a parameter space $\Tht$ with parametrization $\tht\mapsto P_{\tht}^n$,
to define a model for \iid\ data $\samplen=(X_1,\ldots,X_n)\sim P_{\tht_0}^n$,
for some $\tht_0\in\Tht$. Let $\scrD$ be the class of all pre-images of
Hellinger balls, \ie\ sets $D(\tht,\ep)\subset\Tht$ of the form,
\[
  D(\tht,\ep)=\bigl\{\,\tht'\in\Tht:H(P_{\tht},P_{\tht'})<\ep\,\bigr\},
\]
for any $\tht\in\Tht$ and $\ep>0$. After choice of a Kullback-Leibler
prior $\Pi$ for $\tht$ and calculation of the posteriors, choose $D_n$
equal to the pre-image $D(\hat{\tht}_n,\hat\ep_n)$ of
a (\eg\ the one with the smallest radius, if that exists) Hellinger ball
with credible level $1-o(a_n)$, $a_n=\exp(-n\alpha^2)$ for
some $\alpha>0$. Assume, now, that for some
$0<\ep<\alpha$, the $W$ of example~\ref{ex:uniform} is the Hellinger entourage
$W=\{(\tht,\tht'):H(P_{\tht},P_{\tht'})<\ep\}$. Since Kullback-Leibler
neighbourhoods are contained in Hellinger balls, the sets
$D(\hat{\tht}_n,\hat\ep_n+\ep)$
(associated with $D_n$ under the entourage $W$),
is a sequence of asymptotic confidence sets, provided the prior
satisfies (\ref{eq:KLprior}). If we make $\ep$
vary with $n$, neighbourhoods of the form $B_n$ in
example~\ref{ex:GGV} are contained in Hellinger balls of radius
$\ep_n$, and in that case,
\[
  C_n(\samplen) = D(\hat{\tht}_n,\hat\ep_n+\ep_n),
\]
is a sequence of asymptotic confidence sets, provided that the prior
satisfies (\ref{eq:GGV}).
\closebox
\end{example}


\section{Proofs}
\label{sec:proofs}

In this section of the appendix, proofs from the main text are collected.

\begin{proof} (theorem~\ref{thm:doob})\\
The argument (see, \eg, Doob (1949) \cite{Doob49} or Ghosh
and Ramamoorthi (2003) \cite{Ghosh03}) relies on martingale convergence
and a demonstration of the existence of a measurable
$f:\scrX^{\infty}\to\scrP$ such that $f(X_1,X_2,\ldots)=P$,
$P^\infty$-almost-surely for all $P\in\scrP$
(see also propositions~1 and~2 of section~17.7 in \cite{LeCam86}).
\end{proof}

\begin{proof} (proposition~\ref{lem:testineq})\\
Due to Bayes's Rule (\ref{eq:disintegration}) and monotone convergence,
\[
  \begin{split}
  \int_BP_{\tht}&(1-\phi)\,\Pi(V|X)\,d\Pi(\tht)\\
  &\leq \int(1-\phi)\,\Pi(V|X)\,dP^\Pi= \int_V P_{\tht}(1-\phi)\,d\Pi(\tht).
  \end{split}
\]
Inequality (\ref{eq:testineq}) follows from the fact that $\Pi(V|X)\leq1$.
\end{proof}

\begin{proof} (theorem~\ref{thm:testconsequi})\\
Condition {\it (i)} implies {\it (ii)} by dominated convergence.
Assume {\it (ii)} and note that by lemma~\ref{lem:testineq},
\[
  \int P_{\tht,n}\Pi(V|\samplen)\,d\Pi(\tht|B)\to0.
\]
Assuming that the observations $\samplen$ are coupled and can
be thought of as projections of a random variable
$X\in\scrX^\infty$ with distribution $P_\tht$, martingale convergence in
$L^1(\scrX^\infty\times\Tht)$ (relative to the probability measure
$\Pi^{\ast}$ defined by $\Pi^{\ast}(A\times B)
=\int_B P_{\tht}(A)\,d\Pi(\tht)$ for measurable $A\subset\scrX^{\infty}$
and $B\subset\Tht$), shows there is a measurable
$g:\scrX^\infty\to[0,1]$ such that,
\[
  \int P_{\tht}\bigl| \Pi(V|\samplen)
    -g(X)\bigr|\,d\Pi(\tht|B)\to 0.
\]
So $\int P_{\tht} g(X)\,d\Pi(\tht|B)=0$, implying that $g=0$,
$P_{\tht}$-almost-surely for $\Pi$-almost-all $\tht\in B$. Using
martingale convergence again
(now in $L^\infty(\scrX^\infty\times\Tht)$), conclude
$\Pi(V|\samplen)\to0$, $P_{\tht}$-almost-surely for
$\Pi$-almost-all $\tht\in B$, from which {\it (iii)} follows. 
Choose $\phi(\samplen)=\Pi(V|\samplen)$ to conclude that
{\it (i)} follows from {\it (iii)}.
\end{proof}

\begin{proof} (proposition~\ref{prop:msbtest})\\
Apply \cite{LeCam86}, section~17.1, proposition~1
with the indicator for $V$. See also \cite{Breiman64}.
\end{proof}

\begin{proof} (lemma~\ref{lem:rcfirstlemma})\\
Assume {\it (i)}. Let 
$\phi_n:\scrX_n\rightarrow[0,1]$ be given and assume that
$P_n\phi_n=o(a_n)$. By Markov's inequality, for every $\ep>0$,
$P_n(a_n^{-1}\phi_n>\ep)=o(1)$. By assumption, it now follows
that $\phi_n\conv{Q_n}0$. Because $0\leq\phi_n\leq1$ the latter
conclusion is equivalent to $Q_n\phi_n=o(1)$.

Assume {\it (iv)}.
Let $\ep>0$ and 
$\phi_n:\scrX_n\rightarrow[0,1]$ be given. There exist $c>0$
and $N\geq1$ such that for all $n\geq N$,
\[
  Q_n\phi_n < c\,a_n^{-1}P_n\phi_n + \frac{\ep}2.
\]
If we assume that $P_n\phi_n=o(a_n)$ then there
is a $N'\geq N$ such that $c\,a_n^{-1}P_n\phi_n<\ep/2$
for all $n\geq N'$. Consequently, for every $\ep>0$, there exists
an $N'\geq1$ such that $Q_n\phi_n<\ep$ for all $n\geq N'$.

To show that {\it (ii) $\Rightarrow$ (iv)}, let $\mu_n=P_n+Q_n$ and
denote $\mu_n$-densities for $P_n,Q_n$ by $p_n,q_n:\scrX_n\rightarrow\RR$.
Then, for any $n\geq1$, $c>0$,
\begin{equation}
  \label{eq:aliensmall}
  \begin{split}
  \Bigl\|Q_n-&Q_n\wedge c\,a_n^{-1}P_n\|
    =\sup_{A\in\scrB_n}\Bigl( \int_Aq_n\,d\mu_n
      - \int_Aq_n\,d\mu_n \wedge \int_Ac\,a_n^{-1}p_n\,d\mu_n\Bigr)\\
    &\leq \sup_{A\in\scrB_n}
      \int_A (q_n-q_n\wedge c\,a_n^{-1}\,p_n)\,d\mu_n\\
    &= \int 1\{q_n>c\,a_n^{-1}p_n\}\,
      (q_n-c\,a_n^{-1}\,p_n)\,d\mu_n.
  \end{split}
\end{equation}
Note that the right-hand side of (\ref{eq:aliensmall}) is bounded above by
$Q_n(dP_n/dQ_n<c^{-1}a_n)$.

To show that {\it (iii) $\Rightarrow$ (iv)}, it is noted that, for all
$c>0$ and $n\geq1$,
\[
  0 \leq \int c\,a_n^{-1}P_n(q_n>c\,a_n^{-1}p_n)
    \leq Q_n(q_n>c\,a_n^{-1}p_n) \leq 1,
\]
so (\ref{eq:aliensmall}) goes to zero if
$\liminf_{n\rightarrow\infty} c\,a_n^{-1}P_n(dQ_n/dP_n>c\,a_n^{-1})=1$.

To prove that {\it (v)} $\Leftrightarrow$ {\it (ii)}, note that Prohorov's
theorem says that weak convergence of a subsequence within any
subsequence of $a_n(dP_n/dQ_n)^{-1}$ under $Q_n$
(see appendix~\ref{sec:defs}, {\it notation and conventions}) is
equivalent to the asymptotic tightness of
$(a_n(dP_n/dQ_n)^{-1}:n\geq1)$ under $Q_n$, \ie\ for every $\ep>0$
there exists an $M>0$ such that $Q_n(a_n(dP_n/dQ_n)^{-1}>M)<\ep$ for all
$n\geq1$. This is equivalent to {\it (ii)}. 
\end{proof}

\begin{proof} (proposition~\ref{prop:utfamily})\\
For every $\ep>0$, there exists a constant $\delta>0$ such that,
\[
  P_{\tht_0,n}\biggl(\,
    a_n\Bigl(\frac{dP_{\tht,n}}{dP_{\tht_0,n}}\Bigr)^{-1}(\samplen)
    >\frac1\delta\,\biggr)<\ep,
\]
for all $\tht\in B$, $n\geq1$. For this choice of $\delta$,
condition {\it (ii)} of lemma~\ref{lem:rcfirstlemma} is satisfied
for all $\tht\in B$ simultaneously, and \cf\ the proof of
said lemma, for given $\ep>0$, there exists a $c>0$ such that,
\begin{equation}
  \label{eq:limited}
  \|P_{\tht_0,n}-P_{\tht_0,n}\wedge c\,a_n^{-1}P_{\tht,n}\|<\ep,
\end{equation}
for all $\tht\in B$, $n\geq1$. Now note that for any $A\in\scrB_n$,
\[
  \begin{split}
  0\leq &P_{\tht_0,n}(A)-P_{\tht_0,n}(A)\wedge c\,a_n^{-1}P_n^{\Pi|B}(A)\\
  &\leq \int \bigl(
    P_{\tht_0,n}(A)-P_{\tht_0,n}(A)\wedge c\,a_n^{-1}P_{\tht,n}(A)
    \bigr)d\Pi(\tht|B).
  \end{split}
\]
Taking the supremum with respect to $A$, we find the following
inequality in terms of total variational norms,
\[
  \bigl\| P_{\tht_0,n}-P_{\tht_0,n}\wedge c\,a_n^{-1}P_n^{\Pi|B} \bigr\|
  \leq
  \int
    \bigl\| P_{\tht_0,n}-P_{\tht_0,n}\wedge c\,a_n^{-1}P_{\tht,n} \bigr\|
  d\Pi(\tht|B).
\]
Since the total-variational norm is bounded and $\Pi(\cdot|B)$ is a
probability measure, Fatou's lemma says that,
\[
  \begin{split}
  \limsup_{n\rightarrow\infty}
    \bigl\| &P_{\tht_0,n}-P_{\tht_0,n}\wedge c\,a_n^{-1}P_n^{\Pi|B}\bigr\|\\
  &\leq \int
    \limsup_{n\rightarrow\infty}
    \bigl\| P_{\tht_0,n}-P_{\tht_0,n}\wedge c\,a_n^{-1}P_{\tht,n} \bigr\|
  d\Pi(\tht|B),
  \end{split}
\]
and the \rhs\ equals zero \cf\ (\ref{eq:limited}). According to
condition~{\it(iv)} of lemma~\ref{lem:rcfirstlemma} this implies the
assertion.
\end{proof}

\begin{proof} (lemma~\ref{lem:rcsubset})\\
Fix $n\geq1$. Because $B_n\subset C_n$, for every $A\in\scrB_n$, we have,
\[
  \int_{B_n} P_{\tht,n}(A)\,d\Pi(\tht)\leq \int_{C_n}P_{\tht,n}(A)\,d\Pi(\tht),
\]
so $P_n^{\Pi_n|B_n} (A)\leq \Pi_n(C_n)/\Pi_n(B_n)\,P_n^{\Pi_n|C_n}(A)$.
So if for some sequence $\phi_n:\scrX_n\to[0,1]$, we have
$P_n^{\Pi_n|C_n}\phi_n(\samplen)=o(\Pi_n(B_n)/\Pi_n(C_n))$, then
the $P_n^{\Pi_n|B_n}$-expectations of $\phi_n(\samplen)$ are $o(1)$,
proving the first claim. If
$P_n^{\Pi_n|C_n}\phi_n(\samplen)=o(a_n\Pi_n(B_n)/\Pi_n(C_n))$, then
$P_n^{\Pi_n|B_n}\phi_n(\samplen)=o(a_n)$ and, hence,
$P_n\phi_n(\samplen)=o(1)$.
\end{proof}

\begin{proof} (theorem~\ref{thm:consistency})\\
Choose $B_n=B$, $V_n=V$ and use 
proposition~\ref{prop:prototype} to see that $P_n^{\Pi|B}\Pi(V|\samplen)$
is upper bounded by $\Pi(B)^{-1}$ times the \lhs\ of
(\ref{eq:bayesiantestingpower}) and, hence, is of order $o(a_n)$. Condition
{\it(ii)} then implies that $P_{\tht_0,n}\Pi(V|\samplen)=o(1)$,
which is equivalent to $\Pi(V|\samplen)\convprob{P_{\tht_0,n}}0$ since
$0\leq\Pi(V|\samplen)\leq1$, $P_{\tht_0,n}$-almost-surely, for all $n\geq1$.
\end{proof}

\begin{proof} (corollary~\ref{cor:schwartz})\\
A prior $\Pi$
satisfying condition~{\it (ii)} guarantees that $P_0^n\ll P_n^{\Pi}$ for
all $n\geq1$, \cf\ the remark preceding proposition~\ref{prop:dompriorpred}.
Choose $\ep$ such that $\ep^2<D$.
Recall that for every $P\in B(\ep)$, the exponential lower bound
(\ref{eq:lowerbndlik}) for likelihood ratios of $dP^n/dP_0^n$
exists. Hence the limes inferior of $\exp(\ft12n\ep^2)
(dP^n/dP_0^n)(\samplen)$ is greater than or equal to one with
$P_0^\infty$-probability one. Then, with the use of Fatou's lemma and the
assumption that $\Pi(B(\ep))>0$,
\[
  \liminf_{n\to\infty} \frac{e^{nD}}{\Pi(B)}\int_B
    \frac{dP_{\tht}^n}{dP_{\tht_0}^n}(\samplen)\,d\Pi(\tht)
  \geq 1,
\]
with $P_{\tht_0}^\infty$-probability one, showing that sufficient
condition~{\it (ii)} of lemma~\ref{lem:rcfirstlemma} holds. Conclude
that,
\[
  P_0^n \, \ctg \, e^{nD}\, P_n^{\Pi|B},
\]
and use theorem~\ref{thm:consistency} to see that
$\Pi(U|\samplen)\convprob{P_{\tht_0,n}}1$.
\end{proof}

\begin{proof} (theorem~\ref{thm:rates})\\
Proposition~\ref{prop:prototype} says that
$P_n^{\Pi_n|B_n}\Pi(V_n|\samplen)$ is of order $o(b_n^{-1}a_n)$.
Condition {\it(iii)} then implies that
$P_{\tht_0,n}\Pi(V_n|\samplen)=o(1)$, which is equivalent to
$\Pi(V_n|\samplen)\convprob{P_{\tht_0,n}}0$ since
$0\leq\Pi(V_n|\samplen)\leq1$, $P_{\tht_0,n}$-almost-surely
for all $n\geq1$.
\end{proof}

\begin{proof} (theorem~\ref{thm:coverage})\\
Fix $n\geq1$ and let $D_n$ denote a
credible set of level $1-o(a_n)$, defined for all $x\in F_n\subset\scrX_n$
such that $P_n^{\Pi_n}(F_n)=1$. For any $x\in F_n$, let $C_n(x)$ denote a
confidence set associated with $D_n(x)$ under $B$. Due to
definition~\ref{def:confcred}, $\tht_0\in\Tht\setminus C_n(x)$ implies
that $B_n(\tht_0)\cap D_n(x)=\emptyset$. Hence the posterior mass of
$B(\tht_0)$ satisfies $\Pi(B_n(\tht_0)|x)=o(a_n)$.
Consequently, the function
$x\mapsto1\{\tht_0\in\Tht\setminus C_n(x)\}\,\Pi(B(\tht_0)|x)$
is $o(a_n)$ for all $x\in F_n$. Integrating
with respect to the $n$-th prior predictive distribution and dividing by
the prior mass of $B_n(\tht_0)$, one obtains, 
\[
  \frac{1}{\Pi_n(B_n(\tht_0))}\int
    1\{\tht_0\in\Tht\setminus C_n\}\,\Pi(B_n(\tht_0)|X^n)\,dP_n^{\Pi_n}
      \leq \frac{a_n}{b_n}.
\]
Applying Bayes's rule in the form (\ref{eq:disintegration}),
we see that,
\[P_n^{\Pi_n|B_n(\tht_0)}
    \bigl(\tht_0\in\Tht\setminus C_n(\samplen)\bigr)= \int
      P_{\tht,n}\bigl(\tht_0\in \Tht\setminus C_n(\samplen)\bigr)
      \,d\Pi_n(\tht|B_n)\leq\frac{a_n}{b_n}.
\]
By the definition of remote contiguity, this implies asymptotic coverage \cf\
(\ref{eq:coverage}).
\end{proof}

\begin{proof} (corollary~\ref{cor:hellconf})\\
Define $a_n=\exp(-C'n\ep_n^2)$, $b_n=\exp(-Cn\ep_n^2)$, so that
the $D_n$ are credible sets of level $1-o(a_n)$, the sets $B_n$ of
example~\ref{ex:GGV} satisfy condition~{\it(i)} of
theorem~\ref{thm:coverage} and $b_na_n^{-1}=\exp(cn\ep_n^2)$ for
some $c>0$. By (\ref{eq:GGVrc}), we see that condition~{\it(ii)}
of theorem~\ref{thm:coverage} is satisfied. The assertion now
follows.
\end{proof}


\end{document}